\documentclass[a4paper,USenglish,cleveref, autoref, thm-restate]{lipics-v2021}
\hideLIPIcs  

\makeatletter
\renewenvironment{abstract}{%
  \vskip\topmattervskip\bigskipamount
  \noindent
  \rlap{\color{lipicsLineGray}\vrule\@width\textwidth\@height1\p@}%
  \hspace*{7mm}\fboxsep1.5mm\colorbox[rgb]{1,1,1}{\raisebox{-0.4ex}{%
    \large\selectfont\sffamily\bfseries\abstractname}}%
  \vskip3\p@
  \fontsize{9}{12}\selectfont
  \noindent\ignorespaces}
  {% (ccsdesc, keywords, and acknowledgements blocks removed)
   \ifx\@hideLIPIcs\@undefined
     \ifx\@DOIPrefix\@empty\else
       \vskip\topmattervskip\baselineskip\noindent
       \doiHeading\href{https://doi.org/\@lipicsdoi}{\@lipicsdoi}%
     \fi
   \fi
   \ifx\@category\@empty\else
     \vskip\topmattervskip\baselineskip\noindent
     \categoryHeading\@category
   \fi
   \ifx\@relatedversion\@empty\else
     \vskip\topmattervskip\baselineskip\noindent
     \relatedversionHeading\ifx\authoranonymous\relax\textcolor{red}{Anonymous related version(s)}\else\@relatedversion\fi
   \fi
   \ifx\@supplement\@empty\else
     \vskip\topmattervskip\baselineskip\noindent
     \supplementHeading\ifx\authoranonymous\relax\textcolor{red}{Anonymous supplementary material}\else\@supplement\fi
   \fi
   \ifx\@funding\@empty\else
     \vskip\topmattervskip\baselineskip\noindent
     \fundingHeading\ifx\authoranonymous\relax\textcolor{red}{Anonymous funding}\else\@funding\fi
   \fi
   \protected@write\@auxout{}{\string\gdef\string\@pageNumberEndAbstract{\thepage}}%
  }
\makeatother

\usepackage[dvipsnames]{xcolor}
\usepackage[ruled, linesnumbered]{algorithm2e}
\usepackage{amsfonts}
\usepackage{epstopdf}
\usepackage{tikz}
\usetikzlibrary{calc}
\usepackage{ifthen}
\usepackage{tcolorbox}
\usepackage{framed}
\usepackage{xifthen}
\usepackage{tabularx}
\usepackage{todonotes}  % \usepackage[disable]{todonotes}
\usepackage{amsopn}
\usepackage{derivative}
\usepackage{mathtools}
\usepackage[T1]{fontenc}
\usepackage{subcaption}
\usepackage{xspace}
\usepackage{dirtytalk}

\definecolor{nicered}{RGB}{204,0,0}
\definecolor{lightblue}{RGB}{153,204,255}

\usetikzlibrary{shapes.misc}

\tikzstyle{vertex}=[thin,circle,inner sep=0.cm, minimum size=1.7mm, fill=black, draw=black]%normal
 \tikzstyle{svertex}=[thin,circle,inner sep=0.cm, minimum size=1.3mm, fill=black, draw=black]%small
 \tikzstyle{bvertex}=[thin,circle,inner sep=0.cm, minimum size=1.7mm, fill=lightblue, draw=lightblue]%blue
 \tikzstyle{rvertex}=[thin,circle,inner sep=0.cm, minimum size=1.7mm, fill=nicered,draw=nicered]%red
 \tikzstyle{evertex}=[thin,circle,inner sep=0.cm, minimum size=1.7mm, fill=none,draw=black]%empty

 \tikzstyle{edge}=[thick, draw = gray]
 \tikzstyle{tedge}=[ultra thick, draw = black]%thick
 \tikzstyle{tredge}=[ultra thick, draw = nicered]
 \tikzstyle{tbedge}=[ultra thick, draw=lightblue]
 \tikzstyle{redge}=[thick, draw = nicered]
 \tikzstyle{bedge}=[thick, draw = lightblue]

\newcommand{\case}[1]{\medskip \noindent \textcolor{lipicsGray}{{\sffamily\bfseries Case #1.}}}

\newcommand{\maxdeg}{\Delta}

\newcommand{\mindeg}{\delta}

\newcommand{\chrom}{\chi}
\newcommand{\chromG}[1]{\chrom(G)}

\newcommand{\cclass}[1]{\textnormal{\textsf{#1}}}
\newcommand{\partition}[0]{\textsc{Degenerate Partitioning}\xspace}
\newcommand{\ppartition}[0]{$p$-\textsc{Degenerate Partitioning}\xspace}

\newcommand{\partitionNotation}[1]{\mathcal{P}_{#1}}
\newcommand{\setsCanJoin}[1]{\textup{choices}_{\maxdeg}(p,#1)}
\newcommand{\kcolor}{$k$-\textsc{Colorability}\xspace}
\newcommand{\scolor}{$s$-\textsc{Colorability}\xspace}

\newcommand{\quasi}{K^*_{\Delta+1}}

\newcommand{\inducedcyclelengthG}[1]{c^*({#1})}
\newcommand{\bridgesG}[1]{b({#1})}
\newcommand{\minrubinnumberG}[1]{h({#1})}

\newcommand{\Erd}{Erd\H{o}s\xspace}
\newcommand{\Lovasz}{Lov{\'a}sz}

\newcommand{\bct}{block-cut-tree\xspace}

\DeclareMathOperator*{\low}{low}
\DeclareMathOperator*{\high}{high}
\newcommand{\vlow}{V_{\low}}
\newcommand{\vhigh}{V_{\high}}
\newcommand{\defvlowb}{$\vlow = \{v: 0 \leq \deg_G^\ell(v) <  \frac{\maxdeg+q}{2} \}$}
\newcommand{\defvlowG}[1]{$\vlow = \{v: 0 \leq \deg_{#1}^\ell(v) < \frac{\maxdeg+q}{2} \}$}
\newcommand{\defvhighb}{$\vhigh = \{v: \frac{\maxdeg+ q}{2} \leq \deg_G^\ell(v) \}$}
\newcommand{\defvhighG}[1]{$\vhigh = \{v: \frac{\maxdeg + q}{2}  \leq \deg_{#1}^\ell(v) \}$}

\newcommand{\boundvlowb}{$|\vlow| \geq \frac{q}{\maxdeg+q}|V(G)|$}
\newcommand{\boundvlowshort}{\frac{|V(G)|}{\maxdeg+1}}
\newcommand{\boundvlowshortb}{\frac{q}{\maxdeg+q}|V(G)|}
\newcommand{\boundvlowG}[1]{$|\vlow| \geq \frac{q}{\maxdeg+q}|V(#1)|$}
\newcommand{\boundvlowshortG}[1]{\frac{q}{\maxdeg+q}|V(#1)|}

\title{Greedy Coloring and Simple Exponential Strengthenings of Brooks-Type Theorems}

\author{Alex Crane}{University of Utah, USA}{alex.crane@utah.edu}{https://orcid.org/0009-0004-5466-3181}{}

\author{Carl Feghali\footnote{It is with great sadness that we note the passing of Carl Feghali earlier this year. We are grateful for the opportunity to have worked with him on this project.}}{CNRS, ENS de Lyon, Université Claude Bernard Lyon 1, LIP UMR 5668, Lyon, France}{}{0000-0001-6727-7213}{}

\author{Felicia Lucke}{ENS de Lyon, LIP UMR 5668, Lyon, France}{felicia.lucke@ens-lyon.fr}{https://orcid.org/0000-0002-9860-2928}{supported by SNSF Postdoc Mobility Grant 230578.}

\author{Blair D.\ Sullivan}{University of Utah, USA, Collegium de Lyon, ENS de Lyon, LIP UMR 5668, Lyon, France}{sullivan@cs.utah.edu}{https://orcid.org/0000-0001-7720-6208}{supported in part by a grant from the Fulbright U.S. Scholar Program, sponsored by the U.S. Department of State and the Franco-American Fulbright Commission.}

\author{Madelyn J.\ Warr}{University of Utah, USA}{}{https://orcid.org/0009-0006-8042-8497}{supported by funding from the Undergraduate Research Opportunities Program at the University of Utah.}

\authorrunning{A.\ Crane, C.\ Feghali, F.\ Lucke, B.\,D.\ Sullivan, M.\,J.\ Warr}

\Copyright{Jane Open Access and Joan R. Public} %TODO mandatory, please use full first names. LIPIcs license is "CC-BY";  http://creativecommons.org/licenses/by/3.0/

\keywords{Dummy keyword} %TODO mandatory; please add comma-separated list of keywords

\category{} %optional, e.g. invited paper

\relatedversion{} %optional, e.g. full version hosted on arXiv, HAL, or other respository/website
\acknowledgements{I want to thank \dots}%optional

\nolinenumbers %uncomment to disable line numbering

\EventEditors{John Q. Open and Joan R. Access}
\EventNoEds{2}
\EventLongTitle{42nd Conference on Very Important Topics (CVIT 2016)}
\EventShortTitle{CVIT 2016}
\EventAcronym{CVIT}
\EventYear{2016}
\EventDate{December 24--27, 2016}
\EventLocation{Little Whinging, United Kingdom}
\EventLogo{}
\SeriesVolume{42}
\ArticleNo{23}
\begin{document}

\maketitle

\begin{abstract}
    Brooks' Theorem is a foundational result in graph coloring: if a graph is connected, not a clique, and has maximum degree $\maxdeg \geq 3$, then it admits a proper $\Delta$-coloring.
    In this paper, we give a new proof of a much stronger result: every such graph admits exponentially many proper $\maxdeg$-colorings.
    Exponential lower bounds have been obtained previously by applying the Combinatorial Nullstellensatz. In contrast, our proof is via a surprisingly simple and constructive analysis of a textbook greedy coloring algorithm. 
    The entire argument is accessible to non-specialists.

    We extend our techniques to provide quantitative lower bounds on the number of colorings in several more general settings, namely $\maxdeg$-length list coloring, degree-length list coloring (degree-choosability), and degenerate partitioning.
    In the latter setting, one is asked to partition
    the vertices such that each partite set
    induces a subgraph of bounded degeneracy. In contrast to the other coloring variants considered, no clean Brooks-type characterization is possible, since it is known that for some combinations of parameters determining the existence of even one valid partition is NP-hard. We give a new reduction which significantly extends the parameter regime for which determining the existence of a valid partition is NP-hard.

    Finally, we give a new proof of the classic degree-choosability theorem, generalizing the techniques behind a recent proof of Brooks' Theorem.

\end{abstract}

\newpage
\section{Introduction}

We study Brooks' Theorem,
a foundational result in the field of graph coloring. Some basic definitions are required. A proper $k$-coloring of a graph $G$ is an assignment of at most~$k$ colors to the vertices of $G$ such that no pair of adjacent vertices receives the same color. The chromatic number $\chi(G)$ is the least $k$ such that $G$ admits a proper $k$-coloring.

To introduce Brooks' Theorem and our techniques, it is useful to first consider the simplest textbook result in graph coloring: for every graph $G$ with maximum degree $\maxdeg$, $\chi(G) \leq \maxdeg + 1$.
The easiest argument is algorithmic: color the vertices in any fixed order, assigning to each vertex any color not appearing on any of its previously colored neighbors. Because each vertex has at most $\maxdeg$ neighbors and we have $\maxdeg + 1$ colors to choose from, this procedure is guaranteed to produce a proper coloring.
The algorithm we have just described is known as \emph{greedy coloring}, and will play a prominent role in this paper.

A natural and very old question is to determine for which graphs the bound $\chromG{G} \leq \maxdeg + 1$ is tight. Two obvious examples are cliques and odd cycles.
In 1941, Brooks showed that these are in fact the only examples.

\begin{theorem}[Brooks' Theorem~\cite{brooks1941colouring}]
    Let $G$ be a connected graph which is neither a clique nor an odd cycle. Then $G$ admits a proper $\maxdeg$-coloring.
\end{theorem}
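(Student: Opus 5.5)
We prove Brooks' Theorem by a greedy coloring along a carefully chosen vertex ordering, following the classical Lovász argument. The idea is to order the vertices so that every vertex except the last has a neighbor later in the order. Each such vertex has at most $\maxdeg - 1$ previously colored neighbors, so a color among $\maxdeg$ is free. The last vertex is handled by forcing two of its neighbors to receive the same color.

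First we dispose of easy cases. If $\maxdeg \le 2$, then $G$ is a path or a cycle. Since odd cycles and cliques ($K_1$, $K_2$, $K_3$) are excluded, $G$ is an even cycle or a path with at least three vertices, and it is clearly $2$-colorable. So assume $\maxdeg \ge 3$. If $G$ is not $\maxdeg$-regular, pick a vertex $r$ with $\deg(r) < \maxdeg$. Take a BFS (or spanning) tree rooted at $r$, and color the vertices in order of decreasing distance from $r$, ending with $r$. Every vertex other than $r$ has its parent still uncolored when it is colored, so it sees at most $\maxdeg-1$ colors. The root $r$ has fewer than $\maxdeg$ neighbors in total. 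Hence greedy coloring succeeds.

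Now suppose $G$ is $\maxdeg$-regular with $\maxdeg \ge 3$. If $G$ has a cut vertex $v$, split $G$ at $v$ into pieces $G_i$, each consisting of a component of $G - v$ together with $v$. In each $G_i$ the vertex $v$ has degree less than $\maxdeg$, so by the previous case each $G_i$ is $\maxdeg$-colorable. Permute the colors in each piece so that they agree on $v$, and glue the colorings together. So we may assume $G$ is $2$-connected.

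The key step, and the main obstacle, is to find a vertex $v$ with two neighbors $x,y$ such that $xy \notin E(G)$ and $G - \{x,y\}$ is connected. Given such a triple, we color $x$ and $y$ first, both with color $1$. We then color $G-\{x,y\}$ in order of decreasing distance from $v$ in a spanning tree of $G-\{x,y\}$ rooted at $v$. Every non-root vertex has an uncolored parent when it is colored. The root $v$ has $\maxdeg$ neighbors but they use at most $\maxdeg-1$ distinct colors, since $x$ and $y$ share a color. So a free color exists at every step.

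To find the triple, we split on the connectivity of $G$.
\begin{itemize}
\item If $G$ is $3$-connected: since $G$ is not a clique and is connected, there is an induced path $x v y$ (take two vertices at distance $2$). Then $G-\{x,y\}$ is connected by $3$-connectivity.
\item If $G$ has connectivity exactly $2$: pick $v$ such that $G-v$ has a cut vertex. Then $G-v$ is connected but not $2$-connected, so its block-cut tree has at least two leaf blocks. By the $2$-connectivity of $G$, $v$ has a neighbor in each leaf block that is not the block's cut vertex; choose such neighbors $x$ and $y$ in two distinct leaf blocks. Then $x$ and $y$ are nonadjacent, since they lie in different blocks and neither is a cut vertex. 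Moreover $G-v-\{x,y\}$ is connected, because removing a non-cut vertex of a leaf block keeps that block connected to the rest; leaf blocks have at least two vertices, and one checks that degree $\ge 3$ prevents degeneracies. Finally $v$ has $\maxdeg \ge 3$ neighbors, so it retains a neighbor other than $x,y$ in $G-\{x,y\}$, which is therefore connected.
\end{itemize}
The delicate verification is exactly this last connectivity claim, which is where $\maxdeg\ge 3$ is used.
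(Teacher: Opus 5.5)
Your proof is correct and is essentially the argument the paper uses in the proof of \Cref{thm:quantbrooks}, specialized to existence. That argument is Lov\'asz's: a reverse-BFS ordering when $G$ is not regular, splitting at a cut vertex and permuting colors, and in the 2-connected case \Cref{lemma:lovasz-2-connected-lemma} with the two non-adjacent neighbors colored alike and their common neighbor colored last. The differences are cosmetic: you handle $\maxdeg \le 2$ explicitly, and you prove Lov\'asz's lemma by splitting on 3-connectivity rather than on whether $G-x$ is 2-connected. Your final connectivity check, which the paper also leaves as ``easy to check'', holds because deleting $x$ leaves $y$'s leaf block intact, so $y$ remains a non-cut vertex of $G-v-x$.
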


\noindent
Numerous proofs are known, and the theorem has been generalized in many beautiful directions. For an introduction, see the survey of Cranston and Rabern~\cite{cranston2015brooks}, as well as the more recent textbook by Stiebitz, Schweser, and Toft~\cite{stiebitz2024brooks}.

In this paper, we will investigate generalizations of Brooks' Theorem which ask \emph{how many} proper $\maxdeg$-colorings exist.
We will be particularly interested in exponential lower bounds on the number of proper $\maxdeg$-colorings. No such bound can hold in general, even if we exclude cliques and odd cycles, as it is easy to check that the remaining connected graphs of maximum degree $2$ (paths and even cycles) admit only two proper $\maxdeg$-colorings.
It turns out that these are the only counterexamples to an exponential lower bound.

\begin{theorem}[\cite{dahlberg2023polynomial}]
    Let $G$ be a connected graph with maximum degree $\maxdeg \geq 3$, that is not a clique. Then $G$ admits at least $\maxdeg^{\frac{|V(G)|(\maxdeg - 2)}{2(\maxdeg - 1)}}$ proper $\Delta$-colorings.
\end{theorem}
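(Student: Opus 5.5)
The plan is to count colorings with the Alon–Füredi theorem, a quantitative form of the Combinatorial Nullstellensatz. Brooks' Theorem supplies one proper coloring, and the theorem converts that single coloring into a lower bound on the number of proper colorings. The version I will use says the following. Let $f \in \mathbb{R}[x_1,\dots,x_n]$, let $S_1,\dots,S_n \subseteq \mathbb{R}$ be finite, and suppose $f$ does not vanish identically on $S_1\times\dots\times S_n$. Then the number of points of the grid where $f \neq 0$ is at least
\[
\min \Bigl\{ \prod_{i=1}^n y_i \;:\; y_i \in \{1,\dots,|S_i|\},\ \sum_{i=1}^n y_i \ge \sum_{i=1}^n |S_i| - \deg f \Bigr\}.
\]

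I will apply this to the graph polynomial. Set $f(x) = \prod_{uv \in E(G)} (x_u - x_v)$, where each edge is taken once in a fixed orientation. Set $S_v = \{1,\dots,\maxdeg\}$ for every vertex $v$. A point of the grid is a $\maxdeg$-coloring, and $f$ is nonzero there exactly when that coloring is proper. So the number of proper $\maxdeg$-colorings equals the number of nonzeros of $f$ on the grid.

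The hypothesis of Alon–Füredi holds by Brooks' Theorem. $G$ is connected, is not a clique, and has $\maxdeg \ge 3$, so it is not an odd cycle either, and therefore $G$ has a proper $\maxdeg$-coloring. The degree of $f$ is $|E(G)| \le n\maxdeg/2$, where $n = |V(G)|$. The constraint in the theorem therefore becomes $\sum_v y_v \ge n\maxdeg - |E(G)| \ge n\maxdeg/2$, with each $y_v \in \{1,\dots,\maxdeg\}$.

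The remaining work is the integer optimization, and this is where care is needed. I first claim that some minimizer has every $y_v \in \{1,\maxdeg\}$, except possibly a single vertex. Suppose $1 < y_a \le y_b < \maxdeg$. Replacing $(y_a,y_b)$ by $(y_a-1,\,y_b+1)$ keeps the sum and does not increase the product, because
\[
(y_a-1)(y_b+1) = y_ay_b - (y_b - y_a) - 1 < y_a y_b .
\]
Repeating this pushes all values to the extremes, leaving at most one intermediate value $y_c$ with $1 < y_c < \maxdeg$.

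Now suppose $k$ coordinates equal $\maxdeg$ and the rest equal $1$, apart from the possible intermediate $y_c$. The sum constraint gives
\[
k\maxdeg + (n-k) + (y_c - 1) \ge \frac{n\maxdeg}{2}.
\]
This rearranges to
\[
k(\maxdeg - 1) + (y_c - 1) \ge \frac{n(\maxdeg - 2)}{2}.
\]
The product is $\maxdeg^k y_c$. I then use the inequality $y_c \ge \maxdeg^{(y_c-1)/(\maxdeg-1)}$, which holds because $t \mapsto \maxdeg^{(t-1)/(\maxdeg-1)}$ is convex and agrees with $t$ at $t=1$ and $t=\maxdeg$. This gives
\[
\prod_v y_v \ge \maxdeg^{\,k + \frac{y_c-1}{\maxdeg-1}} \ge \maxdeg^{\frac{n(\maxdeg-2)}{2(\maxdeg-1)}},
\]
which is the claimed bound. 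The exponent $\frac{n(\maxdeg-2)}{2(\maxdeg-1)}$ is exactly what this computation produces, which confirms that this is the intended route.
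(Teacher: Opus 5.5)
Your argument is correct and follows the route the paper attributes to \cite{dahlberg2023polynomial} (the paper itself does not reprove this statement): apply the Alon--F\"uredi counting form of the Combinatorial Nullstellensatz to the graph polynomial, use Brooks' Theorem for the non-vanishing hypothesis, and use $|E(G)| \le \maxdeg|V(G)|/2$ to bound the degree. Your smoothing step is unnecessary, since applying $y \ge \maxdeg^{(y-1)/(\maxdeg-1)}$ to every coordinate directly gives $\prod_v y_v \ge \maxdeg^{\sum_v (y_v-1)/(\maxdeg-1)} \ge \maxdeg^{\frac{|V(G)|(\maxdeg-2)}{2(\maxdeg-1)}}$.
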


\noindent
The proof given in~\cite{dahlberg2023polynomial} is by a straightforward application of a counting variant of the Combinatorial Nullstellensatz~\cite{alon1999combinatorial,alon1993covering}, a famously powerful but non-constructive tool.
Our first contribution is a simple and constructive proof of a weaker, but still exponential, bound.

\begin{restatable}[Simplified presentation of~\Cref{thm:quantbrooks}]{theorem}{quantitativebrookstheorem}\label{thm:basic-quant-brooks}
    Let $G$ be a connected graph with maximum degree $\maxdeg \geq 3$, that is not a clique. Then $G$ admits at least $\max\{\lceil\frac{\maxdeg}{2}\rceil^{\Omega(|V(G)|/\maxdeg)}, 2^{\Omega(|V(G)|)}\}$ proper $\maxdeg$-colorings.
\end{restatable}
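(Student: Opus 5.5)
Throughout, let $n=|V(G)|$. The plan is to analyze the greedy coloring algorithm along a carefully chosen vertex ordering, and to count how many distinct colorings it can output when we are free to choose among all available colors at each step. The classical greedy proof of Brooks' Theorem (Lovász-style) works as follows. Pick a root $r$ together with two nonadjacent neighbors $a,b$ of $r$ such that $G-\{a,b\}$ is connected; the case of a cut vertex is handled via blocks. Color $a$ and $b$ with the same color. Then color the remaining vertices in decreasing order of distance from $r$ in a spanning tree of $G-\{a,b\}$, and color $r$ last. Every vertex $v\neq r$ has an uncolored neighbor (its parent) at the time it is colored. Hence it sees at most $\Delta-1$ colored neighbors and has at least one available color. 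The vertex $r$ sees $a$ and $b$ with the same color, so at most $\Delta-1$ colors are blocked. Each such run produces a proper $\Delta$-coloring. Distinct choices at any step yield distinct colorings, so the number of colorings is at least the product over all vertices of the number of available colors at that vertex.

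To get exponentially many colorings, I will find many vertices that have \emph{many} available colors when they are colored. A vertex $v$ of degree $\deg(v)$ which, at coloring time, has $\ell(v)$ colored neighbors has at least $\Delta-\ell(v)$ choices.
\begin{itemize}
\item For the $2^{\Omega(n)}$ bound, it suffices to show that a linear number of vertices have at least two choices. Low-degree vertices ($\deg(v)\le\Delta-1$) automatically have at least two choices, since they also have an uncolored parent. So the difficulty lies in the near-regular case.
\item For the $\lceil\Delta/2\rceil^{\Omega(n/\Delta)}$ bound, I want $\Omega(n/\Delta)$ vertices each having at most $\lfloor\Delta/2\rfloor$ colored neighbors at coloring time.
\end{itemize}
The natural tool is the macros the paper has set up: for an ordering, let $\deg^\ell(v)$ count neighbors earlier in the order. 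The total $\sum_v \deg^\ell(v)=|E(G)|\le \Delta n/2$, so by averaging (Markov), a constant fraction of vertices have $\deg^\ell(v)$ below roughly $(\Delta+q)/2$. This matches the set $\vlow$ and the bound $|\vlow|\ge \frac{q}{\Delta+q}n$. With $q=1$, every vertex in $\vlow$ has $\deg^\ell(v)\le \Delta/2$, hence at least $\lceil\Delta/2\rceil$ choices, and there are at least $n/(\Delta+1)$ such vertices. This gives the first term. Taking $q$ near $\Delta$ (say $q=\Delta-2$), a constant fraction of vertices have $\deg^\ell(v)\le\Delta-2$ and hence at least $2$ choices, which gives the $2^{\Omega(n)}$ term.

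The main obstacle is reconciling the averaging with the rigid ordering needed for Brooks' argument. The ordering must be reverse-BFS so that every vertex except $r$ has a later neighbor, and the special triple $a,b,r$ must be placed first and last. Both conditions interfere with simply taking the bound $\deg^\ell$ from an arbitrary ordering. I would handle this as follows. First, note that the reverse-BFS ordering still satisfies $\sum_v\deg^\ell(v)=|E|$, so the Markov bound applies regardless of the ordering. The guaranteed ``$-1$'' from the parent gives the extra slack needed to count choices. The choices at $a$, $b$, and $r$ only cost $O(1)$ vertices, which is absorbed by the $\Omega(\cdot)$. The remaining technical step is the non-2-connected case. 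There I would use a leaf block of the block-cut tree: a non-cut vertex of degree less than $\Delta$, or the cut vertex itself, serves as the last vertex $r$, which then has at least one free color. The ordering is then a reverse BFS from $r$, and the same averaging goes through. Finally, one must check that the claimed choices are genuinely counted multiplicatively. This holds because the algorithm is deterministic given the choices, so different choice sequences give different colorings.
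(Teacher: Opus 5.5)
Your non-regular case (reverse BFS from a vertex of degree less than $\Delta$, then averaging over left degrees) is correct and matches the paper. So is your $2$-connected $\Delta$-regular case (Lov\'asz triple with $a,b$ first and sharing a color, $r$ last, then averaging).

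The gap is the remaining case: $G$ is $\Delta$-regular and has a cut vertex $c$. Here no vertex has degree less than $\Delta$. The vertex $c$ has degree less than $\Delta$ only inside a block, or inside one side of the cut, not in $G$. If you place $c$ last in a reverse BFS of all of $G$, all $\Delta$ of its neighbors are colored before it, and nothing stops them from using all $\Delta$ colors. So your claim that $r=c$ ``has at least one free color'' fails.

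Nor can a single Lov\'asz-type ordering rescue this case. For $\Delta=3$, take two copies of $K_4$ with one edge subdivided, and join the two subdivision vertices by an edge. This graph is $3$-regular, connected and not complete. Yet one checks directly that no vertex has two non-adjacent neighbors $a,b$ with $G-\{a,b\}$ connected, so no ordering of the kind you describe exists.

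The missing idea is a gluing step, which is how the paper handles this case.
\begin{enumerate}
\item Split $G$ at $c$ into $G_1$, one component of $G-c$ together with $c$, and $G_2$, the remaining components together with $c$.
\item In each $G_i$ the vertex $c$ has degree less than $\Delta$. A reverse BFS from $c$ then shows $G_i$ is $(\Delta-1)$-degenerate, so your non-regular count applies to each $G_i$ separately.
\item Combine the two counts. Since the set of proper $\Delta$-colorings of $G_1$ is closed under permuting the colors, exactly a $\frac{1}{\Delta}$-fraction of them give $c$ any prescribed color. Hence each coloring of $G_2$ extends to exactly $N_1/\Delta$ colorings of $G$, where $N_1$ is the number of colorings of $G_1$.
\end{enumerate}
The factor $\frac{1}{\Delta}$ is absorbed into the $\Omega(\cdot)$. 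For the first bound use $\frac{1}{\Delta}\ge\frac{1}{2}\lceil\Delta/2\rceil^{-1}$; for the second, $\log\Delta\le\log|V(G)|$. Also, $c$ is counted in both pieces, which only helps the exponent.
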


\noindent
To see the basic idea, consider the easier setting in which we are allowed $k = \maxdeg + 2$ colors.
For this value of~$k$, every vertex considered during greedy coloring is guaranteed to have at least two available choices, so
branching on these choices yields a decision tree with $2^{|V(G)|}$ leaves, and thus an exponential number of proper $(\Delta + 2)$-colorings.
When $k = \Delta$, we achieve~\Cref{thm:basic-quant-brooks} by combining a more careful analysis of greedy coloring with a structural result due to~\Lovasz~\cite{lovasz1975three}.
Though we use~\Lovasz's result in a black-box manner, its proof requires only a short analysis of the block-cut-tree of $G$. Thus, our proof of~\Cref{thm:basic-quant-brooks} is easily accessible to readers with an elementary training in graph theory.

Another major thrust in the literature on Brooks' Theorem considers generalized notions of coloring. 
The Combinatorial Nullstellensatz techniques of~\cite{dahlberg2023polynomial} extend to some of these generalizations, and we explore how far we can push our quantitative analysis of greedy coloring.
For example, Brooks' Theorem is known to hold even for list coloring, as long as each list has length at least $\maxdeg$~\cite{erdos1979choosability,vizing1976coloring}. In this setting we achieve the same bound as in~\Cref{thm:basic-quant-brooks}, though we must work a bit harder to do so.

A more restrictive notion of list coloring arises when we assume that each list~$L(v)$ has length at least~$\deg(v)$. The degree-choosability theorem~\cite{borodin1977criterion,erdos1979choosability} is the analogue of Brooks' Theorem in this setting, and we again give a quantitative lower bound on the number of colorings. Our bound is exponential under mild structural assumptions. Furthermore, we provide several graph families on which our bound is tight, implying that assumptions similar to ours are necessary.

Beyond list coloring, another way to generalize graph coloring is to study degenerate partitioning. Formally, given $G$, an integer $s$, and $s$ integers $p_1, \ldots, p_s$, we can ask for a partition of the vertices into $s$ sets $V_1, \ldots, V_s$ such that each induced subgraph $G[V_i]$ has degeneracy\footnote{A graph has degeneracy $d$ if there exists an ordering of its vertices such that each vertex has at most $d$ neighbors appearing to its left. See~\Cref{sec:prelims} for formal definitions.} at most $p_i$. If $s = k$ and each $p_i = 0$, then we are asking for a proper $k$-coloring. 
Borodin~\cite{borodin1976decomposition} and independently Bollob\'as and Manvel~\cite{bollobas1979optimal} have given a Brooks-type result in this setting: a suitable partition is guaranteed to exist if $G$ is not a clique, $\maxdeg \geq 3$, $s \geq 2$, and $\sum_i p_i \geq \maxdeg - s$. We strengthen this result, giving a quantitative lower bound on the number of possible partitions.

All of our quantitative results are obtained via analyses of greedy coloring algorithms, and thus are easily accessible and constructive. We expect that our techniques can be pushed to more generalizations of graph coloring, and suggest that this will be an interesting direction for future work.

Beyond providing quantitative stregthenings of Brooks-type results, we make two further contributions. The first again concerns degenerate partitioning.
In this setting, while the results of~\cite{bollobas1979optimal,borodin1976decomposition} give sufficient conditions for the existence of a partition, a complete characterization in the style of Brooks' Theorem is unlikely.
Abu-Khzam, Feghali, and Heggernes~\cite{abu2020partitioning} gave evidence to this effect by showing that for some combinations of parameters, determining the existence of even one partition is \cclass{NP}-complete.
They then posed it as an open direction to determine the tractability frontier.
We make the first progress in this direction by 
showing that determining the existence of even one partition is \cclass{NP}-complete when $\Delta \geq s - 1 + \sum_i p_i + \Theta(k)$, for every $k \geq 3$. Our reduction is from \kcolor.

Finally, we contribute a new proof of the degree-choosability theorem, generalizing the techniques behind a recent proof of Brooks' Theorem given by Feghali~\cite{feghali2024dirac}. The proof is based on an analysis of chordal graphs, and extends a long line of discrete mathematics literature in which the goal is to see how far various proofs of Brooks' Theorem can be pushed; see the survey~\cite{cranston2015brooks}.

\smallskip
\noindent\textbf{Organization of the paper.} We introduce necessary definitions and basic lemmas in~\Cref{sec:prelims} before proving~\Cref{thm:basic-quant-brooks} in~\Cref{sec:quant-brooks}. We extend the result to $\maxdeg$-length list coloring in~\Cref{sec:quant-brooks-lists}. In~\Cref{sec:quant-degree-choice}, we give a quantitative strengthening of the degree-choosability theorem, before presenting our new proof of the theorem in~\Cref{sec:degree-choice-new-proof}. Finally, we give an analogue of~\Cref{thm:basic-quant-brooks} for degenerate partitioning in~\Cref{sec:quant-degenerate-partitioning}, before presenting our \cclass{NP}-hardness reduction in~\Cref{sec:hardness-degenerate}.

\section{Preliminaries}\label{sec:prelims}

We consider simple, loopless, and undirected graphs. For a graph $G$, we denote by~$V(G)$ the vertex set of $G$ and by $E(G)$ the edge set of $G$. For a vertex $v \in V(G)$, we denote by $\deg_G(v)$ the \emph{degree} of $v$ in $G$. We denote by $\mindeg(G)$ the \emph{minimum} degree of a vertex in $G$ and by $\maxdeg(G)$ the \emph{maximum} degree of a vertex in~$G$. We will just write $\deg(v)$, $\mindeg$, and $\maxdeg$ when the graph is clear. A graph is \emph{$d$-regular} if every vertex has degree exactly $d$. For a natural number $k \in \mathbb{N}$, we write~$[k]$ for the set $\{1, 2, \ldots, k\}$. 

For $k \in \mathbb N$ and a graph $G$, a $k$-\emph{coloring} of $G$ is a function $\phi: V(G) \rightarrow [k]$ assigning at most $k$ colors (here represented by integers) to the vertices of $G$. The coloring $\phi$ is~\emph{proper} if for every edge $uv$, we have $\phi(u) \neq \phi(v)$. The graph $G$ is \emph{$k$-colorable} if it admits a proper $k$-coloring.

Writing $2^{\mathbb N}$ for the (infinite) set consisting of all subsets of $\mathbb N$,
a \emph{list assignment} $L$ for a graph $G$ is a function $L: V(G) \rightarrow 2^\mathbb{N}$ assigning sets (which we call lists) of colors to the vertices of $G$. For a vertex $v$, we write $L(v)$ for the associated list. For an integer $k$, a $k$-\emph{list assignment} $L$ is a list assignment with the additional property that for every vertex $v$, $|L(v)| \geq k$.
If for every vertex $v$, $|L(v)| \geq \deg(v)$, then we say that $L$ is a \emph{degree-list assignment}.
For a graph $G$ and a list assignment $L$, an $L$-\emph{coloring} of $G$ is a proper coloring $\phi$ such that for each vertex~$v$, $\phi(v) \in L(v)$. Notice that if $L(v) = [k]$ for every vertex $v$, then an $L$-coloring is just a proper $k$-coloring. A graph $G$ is \emph{$k$-choosable} if for every $k$-list assignment $L$, there exists an $L$-coloring of $G$. A graph is \emph{degree-choosable} if for every degree-list assignment $L$, there exists an $L$-coloring of $G$.

Our arguments will rely on several more standard definitions, following the textbook by Diestel~\cite{diestel2017graph}. 
We denote the complete graph on $r$ vertices by $K_r$.
For a vertex set $S \subseteq V(G)$, we write $G - S$ for the subgraph induced by $V(G) \setminus S$. If $S$ is a singleton $\{v\}$, then we just write $G - v$. Similarly, for an edge~$uv$, we write $G - uv$ for the subgraph obtained by deleting $uv$.
In a connected graph $G$, a \emph{cut-vertex} is a vertex whose removal disconnects the graph, i.e., a vertex $v$ such that $G - v$ is disconnected. Similarly, a \emph{bridge} is an edge $uv$ such that $G - uv$ is disconnected.
A graph is \emph{$k$-connected} if it has at least $k + 1$ vertices and remains connected after removing any set of at most $k - 1$ vertices. 

A \emph{block} of a graph is a maximal $2$-connected subgraph. 
It is known that every graph admits a unique decomposition into blocks~\cite{diestel2017graph}. This decomposition is called the \emph{block-cut-tree} of~$G$, and is formed by treating each block of $G$ as a vertex, and adding an edge between two blocks if any cut-vertex of $G$ is contained in both blocks.
A \emph{leaf block} is a block corresponding to a leaf in the block-cut-tree. Equivalently, a leaf block is a block that contains exactly one cut-vertex of $G$.

Many of our arguments will rely upon (total) orderings of the vertices of a graph.
For an ordering $\sigma$ of $V(G)$, we say that $u$ is \emph{to the left} of~$v$ if~$u$ precedes~$v$ in~$\sigma$. If $u$ is to the left of~$v$ and $uv \in E(G)$, then $u$ is a \emph{left neighbor} of $v$.
We write $\deg_G^\ell(v)$ for the \emph{left degree} of $v$ in a fixed ordering $\sigma$, that is, for the number of left neighbors of $v$. For a fixed ordering, we say that $v$ is \emph{leftmost} if it is to the left of every other vertex, and similarly \emph{rightmost} if every other vertex is to the left of $v$.

An ordering $\sigma$ is a $d$-\emph{degeneracy ordering} if for every vertex $v$, we have $\deg_G^\ell(v) \leq d$. A graph $G$ is \emph{$d$-degenerate} if it admits a $d$-degeneracy ordering.
Observe that every graph of maximum degree $\Delta$ is $\Delta$-degenerate; any ordering suffices.
Similarly, if even one vertex~$v$ of a connected graph~$G$ has $\deg_G(v) < \maxdeg$, then $G$ is $(\maxdeg - 1)$-degenerate: an ordering can be formed by conducting a breadth-first traversal beginning from $v$, and then taking the reverse of this ordering so that $v$ is rightmost.

Another well-known fact is that every $d$-degenerate graph is $(d+1)$-colorable, and indeed $(d+1)$-choosable. To see this, we consider any $d$-degeneracy ordering $\sigma$ and \emph{color greedily} in this order. That is, we color the vertices in the order given by $\sigma$, assigning to each vertex any color not appearing on any of its left neighbors (any \emph{available} color). Because of the bound on left degrees, every vertex will have an available color. We call this procedure \emph{greedy coloring}, and it will be fundamental to many of our arguments. 

We conclude this section with a simple observation about degeneracy orderings that will be useful in several of our arguments.
\begin{observation}\label{obs-reg-vertremoval-deg}
    Let $G$ be a connected $d$-regular graph for some $d$. Let $v \in V(G)$. Then every connected component of $G-v$ is $(d-1)$-degenerate.
\end{observation}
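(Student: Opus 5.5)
The plan is to reduce the statement to the fact recalled just before it: a connected graph with at least one vertex of degree strictly less than its maximum degree is degenerate one below that maximum. Since $G$ is $d$-regular, $\maxdeg(G) = d$. Fix a connected component $C$ of $G - v$. Every vertex of $C$ has degree at most $d$ in $C$, because degrees can only drop when passing to an induced subgraph. So it suffices to exhibit a single vertex $u \in V(C)$ with $\deg_C(u) \le d-1$, and then to build the ordering explicitly.

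To find such a vertex, use connectivity of $G$. If $d = 0$, then $G$ is the single vertex $v$, $G - v$ has no components, and the statement is vacuous. Otherwise $G$ is connected with at least two vertices, since the component $C$ is nonempty. Pick any $w \in V(C)$ and a path in $G$ from $w$ to $v$. The vertex preceding $v$ on this path lies in $C$, because the subpath avoiding $v$ stays inside one component of $G - v$. Call this vertex $u$. Then $u$ is adjacent to $v$, and $v \notin V(C)$, so $\deg_C(u) \le \deg_G(u) - 1 = d - 1$.

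Next, construct the degeneracy ordering exactly as in the preliminaries. Run a breadth-first traversal of $C$ starting from $u$, and let $\sigma$ be the reverse of the traversal order, so that $u$ is rightmost. For each vertex $x \neq u$, its BFS parent is discovered before $x$, so the parent appears to the right of $x$ in $\sigma$. Hence at least one of the at most $d$ neighbors of $x$ in $C$ lies to its right, giving $\deg^\ell_C(x) \le d - 1$. For $u$ itself, $\deg^\ell_C(u) \le \deg_C(u) \le d-1$. Thus $\sigma$ is a $(d-1)$-degeneracy ordering of $C$.

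There is no real obstacle here. The only step needing care is justifying that some neighbor of $v$ lies in each component, which follows directly from the connectivity of $G$.
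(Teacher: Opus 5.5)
Your proposal is correct and follows essentially the same approach as the paper: choose a neighbor $u$ of $v$ in the component, order the component by the reverse of a breadth-first traversal from $u$, and observe that every vertex other than $u$ has its BFS parent to its right, while $u$ has only $d-1$ neighbors in the component. You also spell out why such a $u$ exists (connectivity of $G$) and the trivial case $d=0$, which the paper leaves implicit.
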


\noindent
To see~\Cref{obs-reg-vertremoval-deg}, let $G'$ be any connected component of $G - v$, and consider any vertex $u \in V(G')$ which neighbors $v$ in $G$. Form an ordering of the vertices of $G'$ by breadth-first traversal from $u$, and then reverse this order so that $u$ is rightmost. Every vertex except $u$ has at least one neighbor to its right, and thus left-degree at most $d - 1$. Meanwhile, every neighbor of $u$ is to its left, but it has only $d - 1$ neighbors, so the result follows.

\section{A Quantitative Brooks' Theorem}\label{sec:quant-brooks}

We start with some lemmas that are important for many of our proofs.
In the following, we often work with a partition of the vertices, based on their left degree. The following lemma gives a lower bound on the number of vertices with small left degree.

\begin{lemma} \label{lem-highlow2}
   Let $G$ be a graph, $k \geq 2|E(G)|/|V(G)|$, and $q < k$. With respect to a fixed ordering of the vertices of $G$, define $\vlow = \{v : 0 \leq \deg_G^\ell(v) < \frac{k + q}{2}\}$, and $\vhigh = \{v : \frac{k + q}{2} \leq \deg_G^\ell(v)\}$.
    Then, $|\vlow| \geq \frac{q}{k+q}|V(G)|$.
\end{lemma}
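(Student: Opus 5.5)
The plan is a double-counting (Markov-type) argument on the sum of left degrees. Fix the ordering. Every edge $uv$ has exactly one endpoint to the right of the other, so it contributes exactly one to that endpoint's left degree. Hence
\[
\sum_{v \in V(G)} \deg_G^\ell(v) = |E(G)| \leq \frac{k}{2}|V(G)|,
\]
using the hypothesis $k \geq 2|E(G)|/|V(G)|$.

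Next, lower-bound the same sum using only the vertices of $\vhigh$. Each such vertex has left degree at least $\frac{k+q}{2}$, and all left degrees are nonnegative. So
\[
\frac{k+q}{2}\,|\vhigh| \leq \sum_{v} \deg_G^\ell(v) \leq \frac{k}{2}|V(G)|.
\]
This gives $|\vhigh| \leq \frac{k}{k+q}|V(G)|$, provided $k+q>0$.

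Since $\vlow$ and $\vhigh$ partition $V(G)$, we get
\[
|\vlow| = |V(G)| - |\vhigh| \geq \left(1 - \frac{k}{k+q}\right)|V(G)| = \frac{q}{k+q}|V(G)|,
\]
which is the claimed bound.

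The only delicate point is the sign of $k+q$, since dividing by it requires $k+q>0$. The lemma is only meaningful, and is only applied, with $q \geq 0$, so I will note that case explicitly.

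If $q \leq 0$ is allowed, the bound $\frac{q}{k+q}|V(G)|$ is nonpositive whenever $k+q>0$, so it holds trivially. If $k+q \le 0$, the expression is degenerate and lies outside the intended regime, which I will exclude with a remark. Within the intended regime everything is a one-line computation; the main step is the identity that the left degrees sum to $|E(G)|$.
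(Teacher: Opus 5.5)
Your argument is correct and is the same double-counting argument as the paper's. The paper phrases it as a contradiction: it assumes $|\vlow| < \frac{q}{k+q}|V(G)|$ and derives $|E(G)| > \frac{k}{2}|V(G)|$, whereas you bound $|\vhigh|$ directly. Your caveat that $k+q>0$ is needed is apt, since the paper uses it implicitly when multiplying by $\frac{k+q}{2}$. For $k+q<0$ the statement is in fact false rather than merely degenerate: there $\vlow=\emptyset$ while $\frac{q}{k+q}|V(G)|>0$.
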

\begin{proof}

Since $\vhigh$ and $\vlow$ partition $V(G)$, we have that $|\vhigh| + |\vlow| = |V(G)|$.
Now we suppose toward a contradiction that $|\vlow| < \frac{q}{k + q}|V(G)|$.
Then $$|\vhigh| = |V(G)| - |\vlow| > |V(G)| - \frac{q}{k + q}|V(G)|.$$
Since $\deg_G^\ell(v) \geq \frac{k + q}{2}$ for any $v \in \vhigh$, we get that
  \begin{align*}
    |E(G)| \geq \sum_{v \in \vhigh} \deg_G^\ell(v) &> \left(|V(G)| - \frac{q}{k + q}|V(G)|\right) \left(\frac{k+q}{2}\right) \\
    &= \left(\frac{k}{k+q}|V(G)|\right)\left(\frac{k + q}{2} \right) = \frac{k}{2}|V(G)|,
  \end{align*}
which contradicts that $|E(G)| \leq \frac{k}{2}|V(G)|$, by definition of $k$. Hence, $|\vlow| \geq \frac{q}{k+q}|V(G)|$.
\end{proof}

\begin{remark}
  Notice that the maximum degree $\maxdeg$ is always a valid choice for $k$ in~\Cref{lem-highlow2}, as it is always true that $|E(G)| \leq \frac{\maxdeg}{2}|V(G)|$. Thus, for any $q < \maxdeg$,~\Cref{lem-highlow2} implies that the number of vertices with left degree strictly less than $\frac{\maxdeg + q}{2}$ is at least $\frac{q}{\maxdeg + q}|V(G)|$.
\end{remark}

\noindent
We now show how~\Cref{lem-highlow2} can be used to obtain a lower bound on the number of $L$-colorings of $G$, for any $\maxdeg$-list assignment $L$, as long as $G$ is not $\maxdeg$-regular.

 \begin{lemma} \label{lem-kdeg2}
    Let $G$ be a $d$-degenerate graph of maximum degree~$\maxdeg\geq 3$, with $d,q < \maxdeg$. 
    Let~$L$ be any $\maxdeg$-list assignment for $G$.
    Then $G$ has at least 
    $\left(\left\lfloor\frac{\maxdeg-q}{2}\right\rfloor + 1\right)^{\boundvlowshortb}$ 
    $L$-colorings.
  \end{lemma}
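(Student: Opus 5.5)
The plan is to count the leaves of the decision tree produced by greedy coloring along a degeneracy ordering, using \Cref{lem-highlow2} to guarantee that many vertices have many available colors. Fix a $d$-degeneracy ordering $\sigma$ of $V(G)$. Since $d < \maxdeg$, every vertex has at most $\maxdeg - 1$ left neighbors, and $|L(v)| \geq \maxdeg$. Hence when we color greedily along $\sigma$ with colors from the lists, every vertex has at least $|L(v)| - \deg_G^\ell(v) \geq 1$ available colors, regardless of the earlier choices.

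Now apply \Cref{lem-highlow2} to the ordering $\sigma$ with $k = \maxdeg$, which is a valid choice by the remark following that lemma, and with the given $q < \maxdeg$. This gives the set $\vlow$ of vertices with $\deg_G^\ell(v) < \frac{\maxdeg+q}{2}$, and $|\vlow| \geq \boundvlowshortb$. For $v \in \vlow$, integrality gives $\deg_G^\ell(v) \leq \lceil \frac{\maxdeg+q}{2}\rceil - 1$. So the number of available colors at $v$ is at least
\[
\maxdeg - \left\lceil \tfrac{\maxdeg+q}{2}\right\rceil + 1 = \left\lfloor \tfrac{\maxdeg-q}{2}\right\rfloor + 1 ,
\]
using $\maxdeg - \lceil x\rceil = \lfloor \maxdeg - x\rfloor$ for integer $\maxdeg$.

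Next, branch on every available choice at every vertex, processing vertices in the order $\sigma$. This yields a rooted tree in which every node at depth $i$ has at least one child, and at least $\lfloor\frac{\maxdeg-q}{2}\rfloor+1$ children when the $(i+1)$-st vertex lies in $\vlow$. Each leaf is a complete proper $L$-coloring, since each vertex receives a list color avoiding all its left neighbors and every edge has a left endpoint. Two distinct leaves differ at the first vertex where their branches diverge, so they give distinct colorings. Therefore the number of $L$-colorings is at least
\[
\left(\left\lfloor\tfrac{\maxdeg-q}{2}\right\rfloor+1\right)^{|\vlow|} \geq \left(\left\lfloor\tfrac{\maxdeg-q}{2}\right\rfloor+1\right)^{\boundvlowshortb},
\]
where the last step uses that the base is at least $1$.

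The only delicate point is the sign of $q$, which the statement does not restrict beyond $q < \maxdeg$. If $q \leq 0$, the exponent $\boundvlowshortb$ is nonpositive (assuming $\maxdeg + q > 0$), while the base is at least $1$. The claimed bound is then at most $1$, and it is met by the single coloring from the first paragraph. So the real content is the case $0 < q < \maxdeg$, where \Cref{lem-highlow2} applies directly. Beyond this, the main care needed is the floor/ceiling computation that converts the strict inequality defining $\vlow$ into the count of available colors.
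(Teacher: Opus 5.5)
Your proof is correct and follows essentially the same route as the paper's: fix a $d$-degeneracy ordering, apply \Cref{lem-highlow2} with $k=\maxdeg$, and multiply the per-vertex counts of available colors along the ordering. Your ceiling computation, the explicit decision-tree distinctness argument, and the remark on $q\le 0$ spell out details the paper leaves implicit.
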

  \begin{proof} Let $G$ be a $d$-degenerate graph for some $d < \maxdeg$.
  We fix a $d$-degeneracy ordering $\sigma = v_1, \dots, v_n$  of the vertices.
  We partition the vertices of $G$ based on their left degree.
  Let \defvhighb, and let 
  \defvlowb.
      By Lemma~\ref{lem-highlow2}, setting $k = \maxdeg$, we have \boundvlowb.

  We count the number of colorings by processing the vertices in the order of $\sigma$, going from left to right.
  For a vertex $v_i$, we can choose among at least $\maxdeg- \deg_G^\ell(v_i)$ colors.
  If $v_i \in \vhigh$, there are thus at least $\maxdeg- d\geq 1$ colors available.
   If $v_i \in \vlow$, there are strictly more than $\maxdeg - \frac{\maxdeg+q}{2}$ colors available. 
  Thus, for a vertex $v_i \in \vlow$, there are at least $\lfloor \maxdeg - \frac{\maxdeg+q}{2}\rfloor + 1$ colors available.
  
   Hence, the number of $L$-colorings is at least
  \[
    \left(\left\lfloor\maxdeg - \frac{\maxdeg+q}{2}\right\rfloor + 1\right)^{|\vlow|} * (\maxdeg-d)^{|\vhigh|} \geq \left( \left\lfloor\frac{\maxdeg-q}{2}\right\rfloor + 1\right)^{\boundvlowshortb}.
  \]
  \end{proof}

\begin{remark}
  Notice that for $q = 1$, after substituting and accounting for the parity of~$\Delta$ we obtain a bound of $\left(\left\lceil \frac{\maxdeg}{2}\right\rceil\right)^{\frac{1}{\maxdeg+1}|V(G)|}$. Meanwhile, for $q = \maxdeg - 2$, the bound becomes $2^{\frac{\maxdeg - 2}{2\maxdeg - 2}|V(G)|}$.
\end{remark}

\begin{remark}\label{rem:unique-cols} Note that throughout this paper, two colorings that lead to the same partition and differ only up to permutation of colors are considered distinct. To avoid this, we can color each of the first $\maxdeg$ vertices with a different color and only start counting colorings afterwards. This gives us a slightly weaker bound of $\left(\left\lfloor\frac{\maxdeg-q}{2}\right\rfloor + 1\right)^{\boundvlowshortb - \maxdeg}$ colorings. A similar analysis can be done for our later results, and we omit the details.
\end{remark}

\subsection{$\Delta$-Regular Graphs}\label{sec:delta-regular}
  To obtain an exponential lower bound on the number of proper $\maxdeg$-colorings, it remains only to handle $\maxdeg$-regular graphs. We are inspired by~\Lovasz's proof of Brooks' Theorem~\cite{lovasz1975three}, and in particular we will make use of~\Cref{lemma:lovasz-2-connected-lemma}. 

\begin{lemma}[\cite{lovasz1975three}]\label{lemma:lovasz-2-connected-lemma}
    Let $G$ be a $2$-connected graph with minimum degree $\mindeg \geq 3$ which is not complete. Then $G$ contains a pair of non-adjacent vertices $u_1, u_2$ with a common neighbor such that $G - \{u_1, u_2\}$ is connected.
\end{lemma}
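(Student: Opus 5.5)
We want a non-adjacent pair $u_1,u_2$ with a common neighbor $x$ such that $G-\{u_1,u_2\}$ is connected. Split into two cases according to whether $G$ is $3$-connected.

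\textbf{Case 1: $G$ is $3$-connected.} Since $G$ is connected and not complete, some vertex has a non-neighbor. Take a shortest path between two non-adjacent vertices; if it has length at least $3$, its first three vertices are a shorter such pair. So there is an induced path $u_1 x u_2$. Because $G$ is $3$-connected, removing the two vertices $u_1,u_2$ leaves $G$ connected, and we are done.

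\textbf{Case 2: $G$ has a separating pair.} Choose a vertex $x$ such that $G-x$ is not $2$-connected. Such an $x$ exists: take one vertex of a $2$-cut. Since $G$ is $2$-connected, $G-x$ is connected and has at least one cut-vertex. Its block-cut-tree therefore has at least two leaf blocks $B_1,B_2$, with cut-vertices $c_1,c_2$ of $G-x$ (possibly $c_1=c_2$).

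\begin{itemize}
\item Because $G$ is $2$-connected, $x$ must have a neighbor in each leaf block $B_i$ other than $c_i$. Otherwise $c_i$ would separate $B_i - c_i$ from the rest of $G$.
\item Pick such neighbors $u_1\in B_1-c_1$ and $u_2\in B_2-c_2$. They lie in different leaf blocks and are not the cut-vertices, so they are non-adjacent, and $x$ is a common neighbor.
\end{itemize}

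It remains to show that $G-\{u_1,u_2\}$ is connected, which is the main obstacle.

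\begin{itemize}
\item Each $B_i$ is $2$-connected or a single edge, so $B_i-u_i$ is connected. Hence $(G-x)-\{u_1,u_2\}$ stays connected through the rest of the block-cut-tree.
\item A leaf block cannot be a single edge $c_iu_i$. In that case $u_i$ would have degree at most $2$ in $G$ (neighbors only $c_i$ and $x$), contradicting $\mindeg\ge 3$. So each $B_i$ is $2$-connected and $B_i-u_i$ is connected.
\item Finally, $x$ must stay attached. Since $\deg(x)\ge 3$, $x$ has a neighbor other than $u_1$ and $u_2$, and that neighbor lies in the connected graph $(G-x)-\{u_1,u_2\}$.
\end{itemize}

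So $G-\{u_1,u_2\}$ is connected, completing the proof.

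The delicate points are the degenerate situations: $c_1=c_2$, a leaf block being an edge, and $x$ losing all its neighbors. The degree condition handles each of these.
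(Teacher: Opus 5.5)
Your proof is correct and is essentially the paper's (Lov{\'a}sz's) argument. The leaf-block step in $G-x$ is identical, and splitting on whether $G$ is $3$-connected, rather than on whether $G-x$ is $2$-connected for a vertex $x$ with a non-neighbor, is only a cosmetic change. You also correctly fill in the paper's ``easy to check'' step using $\deg(x)\ge 3$; your bullet ruling out single-edge leaf blocks is harmless but unnecessary, since in that case $B_i-u_i=\{c_i\}$ is still connected.
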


\noindent
A sketch of the proof is as follows; see~\cite{lovasz1975three}. Since $G$ is not complete, let $x$ be any vertex with at least one non-neighbor. If $G - x$ is $2$-connected, then $x$ and any vertex at distance~$2$ from~$x$ satisfy the lemma. Then we assume that $G - x$ has a cut-vertex. The block-cut-tree of $G - x$ has at least two leaf blocks $B_1$ and $B_2$, with associated cut vertices $z_1$ and $z_2$ (it is possible that $z_1$ = $z_2$). Since $G$ is $2$-connected, $x$ must have some neighbor $u_1$ in $B_1 - z_1$ and another neighbor $u_2$ in $B_2 - z_2$. By definition of a leaf block, neither $u_1$ nor $u_2$ is a cut vertex in $G - x$. It is now easy to check that $G - \{u_1, u_2\}$ remains connected.

In our proof, we will use the vertices given by~\Cref{lemma:lovasz-2-connected-lemma} to find a vertex ordering with certain nice properties. These properties are captured by the following.
\begin{lemma}\label{lemma:non-cutset-ordering}
  Let $G$ be a connected graph of maximum degree $\maxdeg$, and let $X \subseteq V(G)$ be a set of vertices such that $G - X$ is connected, and $|X| \leq \maxdeg$.
  Then for any $v \in V(G) \setminus X$, there exists an ordering of the vertices of $G$ with the following properties:
  \begin{enumerate}
    \item Every vertex in $X$ is to the left of any vertex in $V(G) \setminus X$.
    \item The rightmost vertex is $v$.
    \item Every vertex $u \neq v$ has $\deg_G^\ell(u) < \maxdeg$.
  \end{enumerate}
\end{lemma}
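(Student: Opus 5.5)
We build the ordering in two pieces. First we place the vertices of $X$ in an arbitrary order. Then we order $V(G)\setminus X$ by the reverse of a breadth-first traversal of the connected graph $G - X$ started at $v$. Concretely, we run BFS in $G - X$ from $v$, which reaches every vertex of $V(G)\setminus X$ because $G - X$ is connected, and record the order $w_1 = v, w_2, \dots, w_m$ in which vertices are discovered. The final ordering is $\sigma = (x_1,\dots,x_{|X|}, w_m, w_{m-1}, \dots, w_1)$. Properties~1 and~2 then hold by construction: all of $X$ comes first, and $v = w_1$ is rightmost.

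For property~3 we check the two kinds of vertices $u \neq v$.

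\emph{Vertices outside $X$.} Take $u = w_j$ with $j \geq 2$. In the BFS tree, $u$ has a parent $w_i$ with $i < j$, and $w_i u \in E(G)$. Because the BFS order is reversed, $w_i$ appears to the right of $u$ in $\sigma$. So at least one neighbor of $u$ is not a left neighbor, and $\deg_G^\ell(u) \leq \deg_G(u) - 1 \leq \maxdeg - 1$. This is the same argument as in~\Cref{obs-reg-vertremoval-deg}.

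\emph{Vertices inside $X$.} If $u = x_t \in X$, its left neighbors all lie in $\{x_1,\dots,x_{t-1}\}$. Hence $\deg_G^\ell(u) \leq t - 1 \leq |X| - 1 \leq \maxdeg - 1$, using the hypothesis $|X| \leq \maxdeg$.

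This is where the size bound on $X$ is used, and it is the only slightly delicate point. We want to bound the left degree of vertices in $X$ without knowing anything about how $X$ sits in $G$, and the counting bound $|X|-1$ handles that. If $X$ is empty, only the second case arises and nothing changes. No other obstacle is expected; the whole proof is this construction plus the two short checks above.
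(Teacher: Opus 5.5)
Your proof is correct and follows the paper's argument essentially verbatim: $X$ is placed first, the rest of the vertices are ordered by reversed BFS from $v$ in $G - X$, and Property~3 follows from the bound $|X| - 1 < \maxdeg$ on $X$ and from each BFS parent lying to the right of its child elsewhere. Your explicit BFS-tree bookkeeping just makes the paper's ``at least one neighbor to its right'' step more precise.
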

\begin{proof}
  We begin by ordering the vertices of $G - X$ by breadth-first traversal from $v$, and then taking the reverse of this order so that $v$ is rightmost.
  We prepend the vertices of $X$ to the left end of the ordering. Thus, Properties (1) and (2) are immediate from the construction.
  Property (3) holds for $u \in X$ because each such $u$ has at most $|X| - 1 < \maxdeg$ vertices to its left, and hence $\deg_G^\ell(u) < \maxdeg$.
  For $u \in V(G) \setminus (X \cup \{v\})$, Property (3) holds because our breadth-first construction ensures that $u$ has at least one neighbor to its right, and hence at most $\maxdeg - 1$ neighbors to its left. 
\end{proof}

\noindent
We are now ready to prove our quantitative strengthening of Brooks' Theorem.
\begin{theorem}\label{thm:quantbrooks}
  Let $G$ be a connected graph of maximum degree $\maxdeg \geq 3$ that is not a complete graph, and let $q < \maxdeg$. Then, $G$ has at least~$\frac{1}{\maxdeg}\cdot\left(\left\lfloor\frac{\maxdeg-q}{2}\right\rfloor + 1\right)^{\frac{q}{\maxdeg + q}|V(G)| - 1}$ proper $\maxdeg$-colorings.
\end{theorem}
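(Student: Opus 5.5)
We split on whether $G$ is $\maxdeg$-regular. If it is not, some vertex has degree less than $\maxdeg$. As noted in the preliminaries, a reversed breadth-first ordering from that vertex then shows that $G$ is $(\maxdeg-1)$-degenerate. \Cref{lem-kdeg2} with $d=\maxdeg-1$ and $L(v)=[\maxdeg]$ gives at least $\left(\left\lfloor\frac{\maxdeg-q}{2}\right\rfloor+1\right)^{\frac{q}{\maxdeg+q}|V(G)|}$ proper $\maxdeg$-colorings. This exceeds the claimed bound, since we may lose both the factor $\frac{1}{\maxdeg}$ and the $-1$ in the exponent. So from now on $G$ is $\maxdeg$-regular with $\maxdeg\geq 3$ and not complete.

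\textbf{Regular, not $2$-connected.} Suppose $G$ has a cut-vertex $x$. By \Cref{obs-reg-vertremoval-deg}, every component of $G-x$ is $(\maxdeg-1)$-degenerate. We fix a color $c$ for $x$. For each component $C$ of $G-x$, we take the reversed breadth-first ordering from a neighbor of $x$ in $C$ and color greedily from the lists $[\maxdeg]$, with the lists of neighbors of $x$ shrunk by $c$. We must check that every vertex still has an available color. If $C$ contains all $\maxdeg$ neighbors of $x$, then $x$ would not be a cut-vertex; hence some neighbor of $x$ lies outside $C$. For each neighbor $u$ of $x$ in $C$ we have $\deg_C(u)\le \maxdeg-1$, so the reduced list of $u$ has size at least $\maxdeg-1 \ge \deg_C(u)$. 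The counting is then the argument of \Cref{lem-kdeg2}, applied with the left degree of $u$ in $C$ plus one. Vertices of low left degree now have one fewer choice, which is the one place the bookkeeping needs care. The cleaner route is to work with $G$ ordered so that $x$ is leftmost. Then each vertex's left degree in $G$ equals its left degree in $C$ plus $[u\sim x]$. Every vertex has left degree below $\maxdeg$, because each root vertex has a neighbor in another component and so has left degree at most $\maxdeg-1$. Applying \Cref{lem-highlow2} with $k=\maxdeg$ to this ordering of $G$ gives the count directly.

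\textbf{Regular and $2$-connected.} We apply \Cref{lemma:lovasz-2-connected-lemma} to obtain non-adjacent $u_1,u_2$ with a common neighbor $v$ such that $G-\{u_1,u_2\}$ is connected. We set $X=\{u_1,u_2\}$, noting $|X|=2\le\maxdeg$, and take the ordering from \Cref{lemma:non-cutset-ordering}, with $u_1,u_2$ first and $v$ rightmost. We color $u_1$ and $u_2$ with the same color; this is legal since they are non-adjacent. The remaining vertices other than $v$ are colored greedily, and each has left degree at most $\maxdeg-1$. The vertex $v$ has $\maxdeg$ left neighbors, but two of them share a color, so at most $\maxdeg-1$ distinct colors appear on them and $v$ always has an available color.

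For the count, we apply \Cref{lem-highlow2} with $k=\maxdeg$ to this ordering, so $|\vlow|\ge\frac{q}{\maxdeg+q}|V(G)|$. We then lose at most one factor from forcing $u_2$ to copy $u_1$'s color: $u_2$ may lie in $\vlow$ yet receive only one choice, giving the exponent $-1$. The vertex $v$ contributes a factor of at least $1$. Every other vertex $w$ receives at least $\maxdeg-\deg^\ell(w)$ choices, so each vertex of $\vlow$ other than $u_2$ contributes at least $\lfloor\frac{\maxdeg-q}{2}\rfloor+1$. Note that $v\notin\vlow$, since its left degree is $\maxdeg$. Multiplying gives at least $\left(\left\lfloor\frac{\maxdeg-q}{2}\right\rfloor+1\right)^{\frac{q}{\maxdeg+q}|V(G)|-1}$ colorings, and the extra $\frac{1}{\maxdeg}$ slack remains available.

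\textbf{Injectivity.} We must also confirm that different branch choices give different colorings. This holds because the decision tree is over a fixed order, and distinct branches differ at the first vertex where the choices diverge.

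The main obstacle is the cut-vertex case. The factor $\frac{1}{\maxdeg}$ presumably accounts for fixing, or not branching on, $x$'s color, since $x$ may have left degree forced by the gluing. If the direct ordering argument there fails, the fallback is to choose a leaf block and reduce to the $2$-connected case, or to handle the case inside \Cref{lemma:non-cutset-ordering}.
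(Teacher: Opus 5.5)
Your non-regular case and your $2$-connected case match the paper's proof. The cut-vertex case, however, has a genuine gap. Your justification for the ``cleaner route'' is that each root vertex ``has a neighbor in another component''. This is false. A vertex of a component $C$ of $G-x$ has no neighbors in other components of $G-x$, and its only neighbor outside $C$ is $x$. With $x$ leftmost and the reversed BFS of $C$ rooted at a neighbor $r_C$ of $x$, all $\Delta$ neighbors of $r_C$ precede it. So $\deg^\ell_G(r_C)=\Delta$, and greedy coloring may find no available color there.

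No reordering can repair this. In any ordering of a $\Delta$-regular graph, the rightmost vertex has left degree $\Delta$. So for regular $G$ you can never count via a single ordering in which every vertex has left degree below $\Delta$. Some vertex must be rescued, as $v$ is by the repeated color on $u_1,u_2$ in your $2$-connected case. Your first variant fails for the same reason. Once $x$ has color $c$, the reduced lists satisfy $|L'(w)|=\deg_C(w)$ for every $w\in V(C)$, so the last vertex of $C$ has as many left neighbors as list colors. The inequality you check, $\Delta-1\ge\deg_C(u)$, would have to be strict there. The closing fallback (leaf block, or \Cref{lemma:non-cutset-ordering}) is only a hint, not an argument.

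The missing idea, which is exactly where the factor $\frac{1}{\Delta}$ in the statement comes from, is not to fix the color of $x$ in advance but to glue by color symmetry. Let $C_1$ be one component of $G-x$, and set $G_1=G[V(C_1)\cup\{x\}]$ and $G_2=G-V(C_1)$.
\begin{itemize}
\item Both graphs are connected, they share only $x$, and $\deg_{G_i}(x)<\Delta$ because $x$ has neighbors both inside and outside $C_1$.
\item Hence each $G_i$ is $(\Delta-1)$-degenerate, using a reversed BFS from $x$. By \Cref{lem-kdeg2}, the number $N_i$ of proper $\Delta$-colorings of $G_i$ satisfies $N_i\ge(\lfloor\frac{\Delta-q}{2}\rfloor+1)^{\frac{q}{\Delta+q}|V(G_i)|}$.
\item The set of proper $\Delta$-colorings of $G_1$ is invariant under permutations of $[\Delta]$. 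So exactly $N_1/\Delta$ of them give $x$ any prescribed color, and every coloring of $G_2$ extends in $N_1/\Delta$ ways.
\end{itemize}
Since $|V(G_1)|+|V(G_2)|=|V(G)|+1$, this yields at least $\frac{1}{\Delta}(\lfloor\frac{\Delta-q}{2}\rfloor+1)^{\frac{q}{\Delta+q}(|V(G)|+1)}$ colorings, which is at least the stated bound.
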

\begin{proof}
  Let $G$ be a graph of maximum degree~$\maxdeg$.
  If $G$ is $d$-degenerate for some $d < \maxdeg$, we apply Lemma~\ref{lem-kdeg2} where $L(v) = [\Delta]$ for all $v \in V(G)$.
Hence, we are left with the case where $G$ is $\maxdeg$-regular.
We distinguish two cases.

  \case{1} $G$ has a cut-vertex $v$.\\
  Let $G'_1, \dots , G'_r$ be the connected components of $G-v$.
    Let $G_1 = G[V(G'_1) \cup \{v\}]$ and let $G_2 = G[\bigcup_{i = 2}^{r}V(G'_i)\cup \{v\}]$.
    By \cref{obs-reg-vertremoval-deg}, we obtain that $G_1$ and $G_2$ are both $d_i$-degenerate for some $d_i < \maxdeg$.
  We set $L(u) = [\maxdeg]$ for every $u \in V(G)$ and apply Lemma~\ref{lem-kdeg2} to each $G_i$. We thus obtain that each $G_i$ has at least $\left(\left\lfloor\frac{\maxdeg-q}{2}\right\rfloor + 1\right)^{\frac{q}{\maxdeg + q}|V(G_i)|}$ proper $\maxdeg$-colorings.
  Since $G_1$ and $G_2$ share exactly one vertex $v$, we can only combine colorings of $G_1$ and $G_2$ in which $v$ has the same color in both colorings. Since we can permute color classes, a $\frac{1}{\maxdeg}$-fraction of the colorings assign $v$ a specific color $c$.
  Thus, for every fixed coloring of $G_2$, a $\frac{1}{\maxdeg}$-fraction of the colorings of $G_1$ give the same color to $v$. 
  Hence, the total number of proper $\maxdeg$-colorings of $G$ is at least 
  \[\left(\frac{1}{\maxdeg}\right)\cdot\left(\left\lfloor\frac{\maxdeg-q}{2}\right\rfloor + 1\right)^{\frac{q}{\maxdeg + q}(|V(G_1)|+|V(G_2)|)} \geq \left(\frac{1}{\maxdeg}\right)\cdot\left(\left\lfloor\frac{\maxdeg-q}{2}\right\rfloor + 1\right)^{\frac{q}{\maxdeg + q}(|V(G)| + 1)}\]

\case{2} $G$ is $2$-connected.\\
Let $v$ be a vertex with two non-adjacent neighbors $u_1, u_2$ such that $G - \{u_1, u_2\}$ is connected. 
Such vertices exist by Lemma~\ref{lemma:lovasz-2-connected-lemma}, which holds since $G$ is $\maxdeg$-regular for $\maxdeg \geq 3$.
Then let $\sigma$ be the vertex ordering given by~\Cref{lemma:non-cutset-ordering} for $X = \{u_1, u_2\}$, and assume without loss of generality that $u_1$ appears to the left of $u_2$.
We apply~\Cref{lem-highlow2} to $G$ with this ordering and with the parameter $k = \maxdeg$.
Thus, for the set $\vlow = \{v : \deg_G^\ell(v) < \frac{\maxdeg + q}{2}\}$, we have the bound $|\vlow| \geq \frac{q}{\maxdeg + q}|V(G)|$.

We now count colorings by an argument similar to that of~\Cref{lem-kdeg2}.
We process the vertices in the order given by $\sigma$, with the restriction that $u_1$ and $u_2$ receive the same color. This is always possible, since they are non-adjacent by definition and since they are the first two vertices to be colored by Property (1) of~\Cref{lemma:non-cutset-ordering}.
This restriction ensures that~$v$ always has an available color, since two of its neighbors receive the same color.
Every other vertex also always has an available color, by Property (3) of~\Cref{lemma:non-cutset-ordering}.
Furthermore, every vertex in $\vlow$ (except $u_2$, which is forced to receive the same color as $u_1$) has at least $\left\lfloor\frac{\maxdeg-q}{2}\right\rfloor + 1$ available colors.
Then the total number of proper colorings is at least
\[\left(\left\lfloor\frac{\maxdeg-q}{2}\right\rfloor + 1\right)^{|\vlow \setminus \{u_2\}|}
   \geq \left(\left\lfloor\frac{\maxdeg-q}{2}\right\rfloor + 1\right)^{\boundvlowshortG{G} - 1}.\]

The result now follows from a pessimistic combination of the bounds from these cases.
\end{proof}

  \begin{remark}
    Notice that when $q = 1$, after substituting and accounting for the parity of $\maxdeg$, we obtain the bound
    \[
      \left(\frac{1}{\maxdeg}\right)\cdot\left(\left\lceil \frac{\maxdeg}{2} \right\rceil\right)^{\frac{|V(G)|}{\maxdeg + 1} - 1} \geq \left(\frac{1}{2}\right)\cdot\left(\left\lceil \frac{\maxdeg}{2} \right\rceil\right)^{\frac{|V(G)|}{\maxdeg + 1} - 2} \geq \left(\left\lceil \frac{\maxdeg}{2} \right\rceil\right)^{\frac{|V(G)|}{\maxdeg + 1}-3}.
    \]
    Furthermore, when $q = \maxdeg - 2$, we obtain the bound
    \[
      \frac{1}{\maxdeg}\cdot2^{\frac{\maxdeg - 2}{2\maxdeg - 2}|V(G)| - 1} = 2^{\frac{\maxdeg - 2}{2\maxdeg - 2}|V(G)| -1 - \log \maxdeg}.
    \]
  \end{remark}

\subsection{Extension to $\maxdeg$-Length List Coloring}\label{sec:quant-brooks-lists}

In this section we will generalize~\Cref{thm:quantbrooks} to the setting of list coloring, where every list has length at least $\maxdeg$. It turns out that we can achieve the same bound as in~\Cref{thm:quantbrooks}.

\begin{theorem}
Let $G$ be a connected graph of maximum degree~$\maxdeg$, that is not a complete graph, and let $L$, be any $\maxdeg$-list assignment for $G$. Let $q < \maxdeg$. Then, $G$ has at least $\frac{1}{\maxdeg}\cdot\left(\left\lfloor\frac{\maxdeg-q}{2}\right\rfloor + 1\right)^{\frac{q}{\maxdeg + q}|V(G)| - 1}$ $L$-colorings.
\end{theorem}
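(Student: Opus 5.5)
The plan is to reduce to the case in which the colour-symmetry argument of \Cref{thm:quantbrooks} is not available, and to replace the two places where that proof used it. First, we may truncate every list to exactly $\maxdeg$ colours: every $L$-colouring for the truncated lists is an $L$-colouring for the original ones, so this only lowers the count. If $G$ is $d$-degenerate for some $d<\maxdeg$, \Cref{lem-kdeg2} applies directly to $L$ and gives an even better bound. So we may assume $G$ is $\maxdeg$-regular.

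If all lists are equal, say $L(v)=S$ for every $v$ with $|S|=\maxdeg$, then $L$-colourings are just proper $\maxdeg$-colourings after renaming colours, and \Cref{thm:quantbrooks} gives the bound. Otherwise, since $G$ is connected, there is an edge $vw$ with some colour $c\in L(v)\setminus L(w)$. The intended use of this edge is the following. Order the vertices by reverse breadth-first traversal from $w$, so that $w$ is rightmost and every other vertex has left degree less than $\maxdeg$. If $v$ receives colour $c$, then $c$ removes nothing from $L(w)$. Hence at most $\maxdeg-1$ colours of $L(w)$ are blocked, and $w$ always has an available colour.

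The difficulty is that $v$ must really receive $c$. I would therefore put $v$ leftmost and precolour it with $c$, at the cost of one factor of at most $\maxdeg$ (this is where the $\frac{1}{\maxdeg}$ and the $-1$ in the exponent would be spent). The remaining vertices are then ordered by reverse breadth-first traversal from $w$ inside $G-v$.
\begin{itemize}
\item If $G-v$ is connected, this is exactly \Cref{lemma:non-cutset-ordering} with $X=\{v\}$. Every vertex other than $w$ has left degree less than $\maxdeg$, and $w$ is saved by the choice of $c$.
\item Applying \Cref{lem-highlow2} to this ordering with $k=\maxdeg$, and counting as in \Cref{lem-kdeg2}, gives at least $\bigl(\lfloor\frac{\maxdeg-q}{2}\rfloor+1\bigr)$ choices at every vertex of $\vlow\setminus\{v\}$.
\item This yields the bound $\bigl(\lfloor\frac{\maxdeg-q}{2}\rfloor+1\bigr)^{\frac{q}{\maxdeg+q}|V(G)|-1}$.
\end{itemize}

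The main obstacle is when $v$ is a cut-vertex. This covers in particular the cut-vertex case of \Cref{thm:quantbrooks}, where we divided by $\maxdeg$ using permutation of colours, an argument that is no longer valid for lists. There is also a second problem. A component $H$ of $G-v$ not containing $w$ ends in the ordering at a neighbour of $v$, and that vertex may have $\maxdeg$ coloured left neighbours.

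I would try first to choose the edge $vw$ more cleverly, inside a single block. If two adjacent vertices of some block have different lists, pick $vw$ there. If every block is list-constant, work with a leaf block $B$ and its cut-vertex $z$.
\begin{itemize}
\item Since $B-z$ is attached to the rest of $G$ only through $z$, $G-V(B-z)$ is degenerate with $z$ rightmost. Colour it via \Cref{lem-kdeg2}.
\item Then extend into $B-z$, ordered by reverse breadth-first traversal ending at a neighbour of $z$ inside $B$.
\item If the last vertex gets stuck, I would instead use \Cref{lemma:lovasz-2-connected-lemma} inside $B$. Since $B$ has a constant list, two non-adjacent vertices $u_1,u_2$ with a common neighbour can be given the same colour, exactly as in Case 2 of \Cref{thm:quantbrooks}.
\end{itemize}
The case analysis for whether $B$ is complete (so that \Cref{lemma:lovasz-2-connected-lemma} does not apply) is the step I expect to require the most care. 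In that case I would route the saving through $z$ instead, using that $z$ has neighbours outside $B$ whose lists or colours differ. Finally, take the pessimistic combination of the cases, as in \Cref{thm:quantbrooks}.
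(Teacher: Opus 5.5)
The case where $G-v$ is connected is correct and matches the paper's Case 1. Truncating all lists to size exactly $\Delta$ is a neat replacement for the paper's separate treatment of long lists. The cut-vertex case, however, is a genuine gap.

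\textbf{Missed reduction.} After truncation, $|L(v)|=|L(w)|=\Delta$. So $L(v)\neq L(w)$ also gives a colour in $L(w)\setminus L(v)$, and you may precolour whichever endpoint is not a cut vertex. The only bad situation is when every edge with differing lists has both endpoints cut vertices.

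\textbf{Your dichotomy is vacuous.} Every edge lies in some block. Hence ``two adjacent vertices of some block have different lists'' holds whenever the lists are not all equal. Choosing $vw$ inside a block does not make either endpoint a non-cut vertex of $G$. Conversely, ``every block is list-constant'' forces adjacent vertices to have equal lists, so by connectivity all lists are equal, a case you already handled. Your leaf-block branch is therefore never reached, and the bad situation is never treated.

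\textbf{The missing idea.} The permutation argument is not lost, only localised. In the bad situation, every non-cut vertex has the same list as all its neighbours. Take a leaf block $B$ with cut vertex $x$: every vertex other than $x$ is a non-cut vertex and $B-x$ is connected, so $B$ carries a single list $S$ with $|S|=\Delta$. Permuting $S$ maps $L$-colourings of $B$ to $L$-colourings of $B$. Hence exactly a $\frac{1}{\Delta}$ fraction of them give $x$ any prescribed colour. Since $x$ has degree less than $\Delta$ both in $B$ and in $G'=G-(B-x)$, \Cref{lem-kdeg2} applies to each. Gluing along $x$ then gives the stated bound with the factor $\frac{1}{\Delta}$.

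\textbf{Your substitute step does not go through.} You propose extending a fixed colouring of $G-(B-z)$ into $B$ using \Cref{lemma:lovasz-2-connected-lemma}. This is not carried out, and it meets real obstacles:
\begin{itemize}
\item $\delta(B)$ can be $2$ at $z$ (e.g.\ a quasi-clique leaf block), so the lemma does not apply as stated.
\item Because $z$'s colour is already fixed, $z$ must precede the pair $u_1,u_2$. This can destroy the left-degree guarantee of \Cref{lemma:non-cutset-ordering} for any vertex whose only right neighbour was $z$.
\item The complete-$B$ case you flag as delicate cannot occur: a complete leaf block would have $\Delta+1$ vertices, forcing $\deg(z)>\Delta$.
\item ``Routing the saving through $z$'' is not an argument.
\end{itemize}
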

\begin{proof}
  If $G$ is $d$-degenerate for some $d < \maxdeg$, then we apply Lemma~\ref{lem-kdeg2}.
  Hence, we are left with the case where $G$ is $\maxdeg$-regular. 
  Similarly, if even one vertex~$v$ has $|L(v)| > \maxdeg$, then we apply~\Cref{lemma:non-cutset-ordering}, with $X$ being the emptyset, to obtain a vertex ordering with $v$ rightmost. The result then follows by an argument conceptually identical to that in~\Cref{lem-kdeg2}, noting that $v$ always has an available color because its list is larger than its neighborhood.
  Hence, we may assume that every list has length exactly $\maxdeg$.
  
  If all lists are identical, then since we consider lists of length $\maxdeg$ we can use the same arguments as in Theorem~\ref{thm:quantbrooks}.
  We may therefore assume that not all lists are identical.
  Thus, since $G$ is connected there exist adjacent vertices $u,v \in V(G)$ with $c_u \in L(u) \setminus L(v)$ and $c_v \in L(v)\setminus L(u)$. We distinguish two cases based on whether both $u$ and $v$ are a cut-vertex.

\case{1} There are $u,v$ such that at most one of $u,v$ is a cut-vertex.\\
  Suppose that at least one of $u,v$ is not a cut-vertex, say $u$, so $G - u$ is connected. 
  Then we apply~\Cref{lemma:non-cutset-ordering} to obtain a vertex ordering $\sigma$ with $u$ leftmost, $v$ rightmost, and every vertex $w \neq v$ having $\deg_G^\ell(w) < \maxdeg$.
  With respect to this ordering, let $\vlow = \{w: 0 \leq \deg_G^\ell(w) < \frac{\maxdeg + q}{2}\}$. By applying~\Cref{lem-highlow2} with $k = \maxdeg$, we have that $|\vlow| \geq \frac{q}{\maxdeg + q}|V(G)|$.

  To count the number of colorings, we proceed as in the proof of~\Cref{lem-kdeg2}, processing the vertices from left to right in $\sigma$. 
  We fix the color of $u$ to be $c_u$. 
  For each following vertex, we check which colors are available.
  For each vertex $w \neq v$, there is a color available, by Property (3) of~\Cref{lemma:non-cutset-ordering}. Furthermore,~$v$ has an available color because we fixed color $c_u \notin L(v)$ for $u$, which neighbors $v$.
  
  For every $w \in \vlow$, except $u$ for which the color $c_u$ is fixed, there are at least~$\left\lfloor \maxdeg - \frac{\maxdeg + q}{2}\right\rfloor + 1$ colors available.
  It follows that the number of $L$-colorings of $G$ is at least
  \[
    \left(\left\lfloor \maxdeg - \frac{\maxdeg + q}{2}\right\rfloor + 1\right)^{|\vlow \setminus \{u\}|} \geq \left(\left\lfloor\frac{\maxdeg -q}{2}\right\rfloor + 1\right)^{\frac{q}{\maxdeg+q}|V(G)|-1}.
  \]

\case{2} Both $u,v$ are cut-vertices in $G$.\\
Suppose there are no vertices $u',v' \in V(G)$ such that we can apply Case~1.
This implies that every non-cut-vertex has the same list of colors as all of its neighbors.
Since we assume that not all lists are identical, we may assume that $G$ is not $2$-connected.

We consider the \bct of $G$.
Let $B$ be a leaf block of $G$. Hence, by definition, $B$ contains only one cut-vertex $x$.
Then, all neighbors of $x$ in $B$ have the same list as $x$. It follows that all vertices in $B$ have the same list.

  Let $G' = G - (B - x)$, that is, we delete all vertices in $B$ except $x$. Notice that $G'$ is connected, and in $G'$ $x$ has degree at most $\maxdeg-1$.
  By \cref{lemma:non-cutset-ordering}, with $X$ being the emptyset, we obtain a $(\maxdeg-1)$-degeneracy ordering of $G'$ where $x$ is rightmost. 
  Using~\Cref{lem-kdeg2}, $G'$ has at least $\left(\left\lfloor\frac{\maxdeg-q}{2}\right\rfloor + 1\right)^{\frac{q}{\maxdeg + q}|V(G')|}$ $L$-colorings.

  Consider a fixed $L$-coloring of $G'$. This coloring may be combined with any $L$-coloring of~$B$ to form an $L$-coloring of $G$, as long as $x$ receives the same color in both colorings.
  Since all vertices in $B$ have the same list, we may permute color classes to see that a $\frac{1}{\maxdeg}$-fraction of $L$-colorings of $B$ may be combined with any given $L$-coloring of $G'$.
  Again using~\Cref{lem-kdeg2} to give a bound on the number of $L$-colorings of $B$, we now have that the total number of $L$-colorings of $G$ is at least
  \[
    \left(\frac{1}{\maxdeg}\right)\left(\left\lfloor\frac{\maxdeg-q}{2}\right\rfloor + 1\right)^{\frac{q}{\maxdeg + q}(|V(G')| + |V(B)|)} = \left(\frac{1}{\maxdeg}\right)\left(\left\lfloor\frac{\maxdeg-q}{2}\right\rfloor + 1\right)^{\frac{q}{\maxdeg + q}(|V(G)|+1)}.
  \]
  The result now follows by a pessimistic combination of the bounds in these cases.
\end{proof}

\section{Degree-Length Lists}\label{sec:degree-choice}

The list coloring variant of Brooks' Theorem discussed in~\Cref{sec:quant-brooks-lists} raises a natural question: how long must the lists be to guarantee a coloring?
Recall from~\Cref{sec:prelims} that a \emph{list assignment}~$L$ for a graph $G$ is a function $L: V(G) \rightarrow 2^\mathbb{N}$ assigning sets (which we call lists) of colors to the vertices of $G$.
An $L$-coloring $\phi$ of $G$ is a proper coloring of $G$ such that for each vertex $v$, we have $\phi(v) \in L(v)$.

In this section, we will say that a list assignment $L$ is a \emph{degree-list assignment} if for every vertex $v$, we have $|L(v)| \geq \deg(v)$.
A graph $G$ is \emph{degree-choosable} if it admits an $L$-coloring for every degree-list assignment $L$.
Borodin~\cite{borodin1977criterion} and (independently) \Erd, Rubin, and Taylor~\cite{erdos1979choosability} have classified the degree-choosable graphs.
A \emph{Gallai tree} is a graph in which every block is either a clique or an odd cycle.

\begin{theorem}[Degree-Choosability~\cite{borodin1977criterion,erdos1979choosability}]\label{thm:degree-choosability}
    A connected graph is degree-choosable if and only if it is not a Gallai tree.
\end{theorem}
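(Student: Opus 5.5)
We prove the two directions separately; the substantial one is that every connected graph that is not a Gallai tree is degree-choosable.

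\emph{Gallai trees are not degree-choosable.} We induct on the number of blocks and build a bad degree-list assignment. If $G$ is a single block, it is $K_r$ or an odd cycle $C_{2m+1}$. Give every vertex the list $[r-1]$, respectively $\{1,2\}$. Each list has length equal to the degree, and no $L$-coloring exists.

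For the inductive step, let $B$ be a leaf block with cut-vertex $x$, and let $G' = G-(B-x)$. By induction there is a bad degree-list assignment $L'$ for $G'$, using colors disjoint from a fresh set of colors $S$ with $|S| = \deg_B(x)$. Assign $L'(v)$ to $v \in V(G')\setminus\{x\}$, assign $S$ to the vertices of $B-x$, and assign $L'(x)\cup S$ to $x$. Then $|L(x)| = \deg_{G'}(x)+\deg_B(x) = \deg_G(x)$.

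Suppose some $L$-coloring of $G$ exists. If $x$ receives a color of $L'(x)$, its restriction to $G'$ is an $L'$-coloring, a contradiction. If $x$ receives a color of $S$, its restriction to $B$ is a coloring of $B$ from lists $S$, with $|S|=\deg_B(x)$ equal to the degree within $B$. This is impossible by the single-block case, since $B$ is a clique or odd cycle and is therefore regular.

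\emph{Non-Gallai-trees are degree-choosable.} Let $G$ be connected and not a Gallai tree, and let $L$ be a degree-list assignment. We reduce to a single block. If some vertex $v$ has $|L(v)|>\deg(v)$, take a reverse BFS order ending at $v$ and color greedily: every vertex other than $v$ has a right neighbor, and $v$ has a spare color. So assume $|L(v)|=\deg(v)$ for all $v$.

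Choose a block $B$ that is neither a clique nor an odd cycle. First color $G-(B-\text{cut-vertices})$ in suitable orders pointing toward $B$, so that each component hanging off $B$ at a cut vertex $x$ is colored except $x$. Each vertex there has a neighbor later in the order, so greedy coloring succeeds. Deleting the colors used on the outside neighbors from the lists of $B$ leaves a degree-list assignment of $B$ in $B$'s own degrees. It therefore suffices to show that a 2-connected graph $B$ that is neither a clique nor an odd cycle is degree-choosable.

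\emph{The main step: 2-connected blocks.} Let $B$ be 2-connected, not a clique and not an odd cycle. If the lists are not all identical, there is an edge $uv$ with $c\in L(u)\setminus L(v)$. Since $B$ is 2-connected, $B-u$ is connected, so the ordering of \Cref{lemma:non-cutset-ordering} with $X=\{u\}$ and $v$ rightmost applies. Color $u$ with $c$ and then proceed greedily; $v$ survives because $c\notin L(v)$.

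If all lists are identical, every vertex has the same degree $d$, so $B$ is $d$-regular. For $d\ge 3$, \Cref{lemma:lovasz-2-connected-lemma} gives non-adjacent $u_1,u_2$ with a common neighbor $v$ such that $B-\{u_1,u_2\}$ is connected. Give $u_1$ and $u_2$ the same color and color greedily in the order of \Cref{lemma:non-cutset-ordering} with $X=\{u_1,u_2\}$, exactly as in Case 2 of \Cref{thm:quantbrooks}. For $d=2$, $B$ is an even cycle with identical 2-lists, which is 2-colorable.

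The main obstacle is making the reduction to a single block rigorous: we must order the outside components so that each vertex other than the attachment cut vertex has a later neighbor, and verify that the residual lists on $B$ are degree-lists relative to $B$ (as a block, not necessarily with equality). Any slack created on $B$ is handled by the strict-inequality argument above.
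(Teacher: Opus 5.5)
Your proof is correct, but it takes a genuinely different route from the paper's.

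\textbf{Easy direction.} The paper cites \cite{erdos1979choosability} for the Gallai-tree direction. You prove it directly by the standard leaf-block induction, which is fine.

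\textbf{Main direction, paper's route.} The paper takes a minimum counterexample. It uses \Cref{prop:rubin-subgraphs} and \Cref{lemma:chordal-graphs-degree-choosable} to find an induced odd cycle $C$ of length at least $5$ in a bad block. It then does a surgery: delete an arc of $C$ and add the chord $v_1v_i$. The path $P$ certifies that the resulting smaller graph is still not a Gallai tree, so minimality gives a coloring of it. Finally it extends to $C$, using that the shorter odd cycle $C_H$ forces three distinct colors into the (identical) residual lists.

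\textbf{Main direction, your route.} You reduce to a single bad block by greedily coloring the pendant components toward it. Within a $2$-connected block with tight lists, either two adjacent lists differ, which is exactly \Cref{lemma:2-connected-unequal-degree-choosable}, or all lists coincide. In the latter case the block is $d$-regular and the problem is ordinary $d$-coloring, which Lov\'asz's lemma settles as in Brooks' theorem.

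\textbf{What each buys.} Your argument is shorter. It shows that degree-choosability follows from Brooks' theorem plus greedy arguments. The paper's argument deliberately uses neither \Cref{lemma:lovasz-2-connected-lemma} nor any Brooks-type input, so Brooks' theorem is obtained as a corollary rather than assumed. Its stated purpose is to show that Feghali's chordal-graph technique extends to the degree-list setting.

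\textbf{Citation issue.} In the unequal-lists case, \Cref{lemma:non-cutset-ordering} as stated only guarantees left degree $<\maxdeg$. That is not enough when $B$ is irregular, since you need left degree $<\deg_B(w)$ for each vertex $w \neq v$. What you actually use is the construction in that lemma's proof, where every vertex outside $X\cup\{v\}$ has a neighbor to its right. Alternatively, cite \Cref{lemma:2-connected-unequal-degree-choosable} directly. In the identical-lists case the block is regular, so the lemma applies as stated.
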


\noindent
It is well known that~\Cref{thm:degree-choosability} implies Brooks' Theorem; see, e.g.,~\cite{cranston2015brooks}.
In~\Cref{sec:quant-degree-choice}, we give a quantitative strengthening of~\Cref{thm:degree-choosability}. Our proof is once again based on an analysis of greedy coloring, but we will need to take more care when defining the order in which we color the vertices.
In~\Cref{sec:degree-choice-new-proof}, we give a new proof of~\Cref{thm:degree-choosability}, inspired by a recent proof of Brooks' Theorem given by Feghali~\cite{feghali2024dirac}.

\subsection{A Quantitative Degree-Choosability Theorem}\label{sec:quant-degree-choice}

It is useful to begin by discussing how the degree-choosability theorem may be proven.
Showing that Gallai trees are not degree-choosable follows by induction on the number of blocks, see e.g.,~\cite{cranston2015brooks}, so the main challenge is to prove that a connected graph is degree-choosable if it is not a Gallai tree.
In addition to~\cite{borodin1977criterion} and~\cite{erdos1979choosability}, several more proofs have appeared e.g.,~\cite{hladky2010brooks,kostochka1996colour}. A common strategy is to make use of a degree-choosable induced subgraph. We say that a \emph{Rubin graph} is an even cycle with at most one chord, and that $G$ has a \emph{Rubin subgraph} if it contains a Rubin graph as an induced subgraph.
The following observation has been made several times.

\begin{proposition}[\cite{erdos1979choosability}]\label{prop:rubin-subgraphs}
    Let $G$ be a connected graph. If $G$ has a Rubin subgraph, then $G$ is degree-choosable.
\end{proposition}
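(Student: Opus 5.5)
The plan is to reduce the statement to two facts: the Rubin graph $H$ itself is degree-choosable (in a strong sense), and then $G$ can be colored by peeling vertices off from the outside toward $H$.

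\textbf{Reduction to $H$.} Let $H$ be an induced Rubin subgraph of $G$ and $L$ a degree-list assignment for $G$. Since $G$ is connected, order $V(G)\setminus V(H)$ by breadth-first distance from $V(H)$, listing the vertices farthest from $H$ first. Color them greedily in this order. Each such vertex $w$ has at least one neighbor that comes later, namely a neighbor closer to $H$ or in $H$. So at most $\deg_G(w)-1 < |L(w)|$ of its neighbors are already colored, and a color from $L(w)$ is always available. After this, each $x\in V(H)$ has its list reduced by the colors of its neighbors outside $H$. Define
\[
L'(x)=L(x)\setminus\{\phi(w): w\in N_G(x)\setminus V(H)\}.
\]
Then $|L'(x)|\ge \deg_G(x)-(\deg_G(x)-\deg_H(x))=\deg_H(x)$, because $H$ is induced. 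So it suffices to show that $H$ is degree-choosable.

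\textbf{The even cycle.} Let $H=C$ be an even cycle $x_1\dots x_{2m}$ with lists of size at least $2$; we may trim them to size exactly $2$.
\begin{itemize}
\item If all lists are equal, color $C$ properly with the two colors alternately, which works because the cycle is even.
\item Otherwise some adjacent pair, say $x_1,x_{2m}$, has $L(x_1)\ne L(x_{2m})$. Pick $c\in L(x_1)\setminus L(x_{2m})$ and give $x_1$ color $c$. Then color $x_2,\dots,x_{2m}$ greedily along the path.
\end{itemize}
In the second case each of $x_2,\dots,x_{2m-1}$ has only one previously colored neighbor, so it always has a free color. The last vertex $x_{2m}$ has two colored neighbors, but one of them has color $c\notin L(x_{2m})$, so it also has a free color.

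\textbf{The even cycle with one chord.} Let $C$ be an even cycle with chord $x_1x_j$, so the chord endpoints have lists of size $3$ and all other vertices have lists of size $2$. The chord splits $C$ into two cycles through $x_1$ and $x_j$; since $C$ is even, one of them is even and one is odd. I would argue by cases on the lists at the chord endpoints. The idea is to choose the colors of $x_1$ and $x_j$ first so that the two remaining paths can be completed greedily, using the third color at a chord endpoint to break the bad configuration.

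Concretely, when $x_1$ is colored with some $c$, each neighbor on the cycle loses at most one option. A path between two precolored vertices with $2$-lists fails only in the tight situation where every list along it is the same pair. So the step is as follows:
\begin{itemize}
\item Pick $c_1\in L(x_1)$ and $c_j\in L(x_j)$ with $c_1\ne c_j$.
\item Do this so that each of the two paths, with ends forced, is completable.
\item Three choices at each endpoint give enough freedom to avoid the at most one forbidden parity or color pattern on each side.
\end{itemize}

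\textbf{Main obstacle.} I expect this chord case to be the main obstacle. It needs careful but routine case analysis, in particular when the lists of the chord endpoints are nearly identical to those on the paths.
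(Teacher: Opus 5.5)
Your reduction to the Rubin subgraph $H$ is exactly the paper's argument: you color $G-H$ from the vertices farthest from $H$ inward, then pass the reduced lists $L'$ with $|L'(x)|\ge\deg_H(x)$ to $H$. Your even-cycle case is also correct. At this point the paper simply invokes the known degree-choosability of Rubin graphs (Erd\H{o}s--Rubin--Taylor, reproved in its appendix). The gap is the chorded case, which you leave as a sketch resting on false claims.

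\begin{itemize}
\item The two cycles cut out by a chord $x_1x_j$ of an even cycle have the \emph{same} parity, since their lengths $j$ and $2m-j+2$ sum to $2m+2$. So it is not true that one is even and one odd.
\item A path with $2$-lists between precolored ends can fail without all lists being equal. It fails exactly when the lists form a chain $\{f_0,f_1\},\{f_1,f_2\},\dots,\{f_{a-1},f_a\}$ with $f_0=c_1$ and $f_a=c_j$. For example, take $c_1=\alpha$, lists $\{\alpha,\beta\},\{\beta,\gamma\}$, and $c_j=\gamma$.
\item Most importantly, your counting does not close. For a fixed $c_1$, each side forbids at most one value of $c_j$. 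Together with $c_j\neq c_1$, that is up to three forbidden values against the three colors of $L(x_j)$. This really happens: in the diamond $x_1x_2x_3x_4$ with chord $x_1x_3$ and $L(x_2)=\{\alpha,\beta\}$, $L(x_4)=\{\alpha,\gamma\}$, $L(x_3)=\{\alpha,\beta,\gamma\}$, the choice $c_1=\alpha$ cannot be completed. So ``three choices give enough freedom'' needs an actual rule for choosing $c_1$.
\end{itemize}

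The missing idea is to spend the surplus at $x_1$ to eliminate one side altogether. Let $p$ be a cycle-neighbor of $x_1$. After trimming, $|L(p)|=2<3=|L(x_1)|$, so pick $c_1\in L(x_1)\setminus L(p)$ and color $x_1$ with it. Then $H-x_1$ is a path on which the reduced lists still have size at least the degrees. Moreover, $p$ keeps both its colors while having degree $1$, so \Cref{lemma:degree-plus-one-choosable} colors the path.

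Alternatively, note that $H$ is $2$-connected, so unequal lists are handled by \Cref{lemma:2-connected-unequal-degree-choosable}. If all lists are equal, they have size at least $3$, every degree-$2$ vertex has a surplus, and \Cref{lemma:degree-plus-one-choosable} applies again.

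This is essentially what the paper's appendix does through its Cases~3 and~4 at a chord endpoint. With this step added, your proof is complete.
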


\noindent
For the sake of an informal outline, assume that Rubin graphs are degree-choosable; this is known by~\cite{erdos1979choosability}, and we will provide a strengthening of this result in~\Cref{prop:rubin-subgraphs-two-colorings}.
Then~\Cref{prop:rubin-subgraphs} follows by greedy coloring. Fix a Rubin subgraph $H$, and order the vertices of $G - H$ by increasing distance from $H$ in $G$. Next, greedily color the vertices in the reverse of this order, assigning to each $v$ any color from $L(v)$ not appearing on any previously colored neighbor of $v$. By our ordering, every vertex of $G - H$ has at least one uncolored neighbor when it is colored, and thus there is always a color available. The $L$-coloring of $G - H$ can then be extended to an $L$-coloring of $G$ by appealing to the degree-choosability of $H$.

\Erd, Rubin, and Taylor completed the proof of~\Cref{thm:degree-choosability} by showing that Gallai trees are exactly those graphs with no Rubin subgraph~\cite{erdos1979choosability}. This lemma has been proven several more times, e.g.,~\cite{entringer1985short,hladky2010brooks,rabern2013coloring}.
In this section, we show that by taking more care when defining our vertex ordering, the greedy-coloring-based argument yielding~\Cref{prop:rubin-subgraphs} can be extended to 
obtain a quantitative strengthening of~\Cref{thm:degree-choosability}.

To state the result, we need several notations. We say that a Rubin subgraph $H$ of a graph $G$ is \emph{minimum} if $G$ contains no Rubin subgraph with fewer vertices than $H$. If $G$ is not a Gallai tree, we write $\minrubinnumberG{G}$ for the number of vertices in a minimum Rubin subgraph of~$G$.
We also write $\inducedcyclelengthG{G}$ for the length of a longest induced (chordless) cycle in $G$, and~$\bridgesG{G}$ for the number of bridges in $G$. %Recall from~\Cref{sec:prelims} that the endpoints $u, v$ of a bridge $uv$ are the elements of a block of $G$ with cardinality exactly~$2$, and that in this manner bridges exactly characterize the cardinality~$2$ blocks of $G$.
We can now state our result.

\begin{restatable}{theorem}{quantitativedegreechoosability}\label{thm:quantitative-degree-choosability-theorem}
    Let $G$ be a connected graph which is not a Gallai tree, and $L$ be any degree-list assignment. Then $G$ admits at least $2^{\frac{|V(G)| - \minrubinnumberG{G} - \bridgesG{G}}{\inducedcyclelengthG{G}-1} + 1}$ $L$-colorings.
\end{restatable}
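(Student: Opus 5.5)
We would follow the greedy-coloring proof of \Cref{prop:rubin-subgraphs}, but choose the coloring order so that many vertices see two or more available colors. Fix a minimum Rubin subgraph $H$, so $|V(H)| = \minrubinnumberG{G}$. We first establish the strengthening promised in the outline (\Cref{prop:rubin-subgraphs-two-colorings}): for any list assignment $L'$ on $H$ with $|L'(w)| \geq \deg_H(w)$, the graph $H$ has at least two $L'$-colorings. This accounts for the ``$+1$'' in the exponent. The second coloring should come from the usual argument for even cycles with at most one chord: either there is a vertex whose list contains a color absent from a neighbor's list, or all lists agree and we alternate between two colorings.

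Next we order the vertices of $G - H$ by decreasing distance from $H$, i.e., in the reverse of a breadth-first search from $H$, and color greedily, finishing with $H$. Each vertex $w$ of $G-H$ has at least one neighbor that is colored after it. Hence $w$ has at least $|L(w)| - \deg^\ell(w) \geq \deg(w) - \deg^\ell(w)$ available colors. A vertex with at least two later neighbors (``right neighbors'') therefore contributes a branching factor of at least $2$. The lists left on $H$ after precoloring $G - H$ still satisfy $|L'(w)| \geq \deg_H(w)$, since each neighbor outside $H$ removes at most one color. So the final step contributes a factor of $2$. The number of $L$-colorings is then at least $2^{t+1}$, where $t$ counts the vertices of $G - H$ with at least two right neighbors.

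The main obstacle is lower-bounding $t$ by $(|V(G)| - \minrubinnumberG{G} - \bridgesG{G})/(\inducedcyclelengthG{G}-1)$. Our first attempt is to refine the BFS order: among vertices at the same distance, we break ties so that the ``bad'' vertices, those with exactly one right neighbor, form chains. We then argue that each maximal run of consecutive bad vertices, following right-neighbor pointers, is either a path through bridges or lies on an induced path inside some block. Inside a $2$-connected block, two vertex-disjoint routes toward $H$ exist. We intend to show that a run of bad vertices together with the vertex ending it spans an induced path contained in an induced cycle. By the definition of $\inducedcyclelengthG{G}$, such a cycle has at most $\inducedcyclelengthG{G}$ vertices, so each run has at most $\inducedcyclelengthG{G} - 2$ bad vertices before a good vertex, one with two right neighbors, must appear.

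Bridges are handled separately: each bridge can force one extra bad vertex, which is why $\bridgesG{G}$ is subtracted. Charging every bad vertex not attributable to a bridge to a distinct good vertex, with at most $\inducedcyclelengthG{G}-2$ bad vertices per good one, gives $t(\inducedcyclelengthG{G}-1) \geq |V(G)| - \minrubinnumberG{G} - \bridgesG{G}$, which is the claimed bound. We expect this charging step to need the most care. In particular, we must ensure that the induced cycle witnessing a long bad run genuinely exists. If it fails, the fallback is to modify the order locally, swapping a vertex's position with a neighbor at the same distance, so that the run is broken.
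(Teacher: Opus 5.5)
Your framework matches the paper's: color $G-H$ so that every vertex has a later-colored neighbor, gain a factor $2$ at each vertex with two later-colored neighbors, and get a final factor $2$ from $H$ via the residual lists. The problem is the step you flag as the main obstacle. It is not merely delicate; it fails for every order by distance from $H$, however ties are broken.

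\textbf{Counterexample.} Let $H$ be an induced $4$-cycle $a,b,c,d$, and attach at $a$ the $k\ge 3$ internally disjoint paths $a,p_j^1,p_j^2,z$ for $j\in[k]$. Then:
\begin{itemize}
\item $G$ is connected and not a Gallai tree.
\item $H$ is its unique minimum Rubin subgraph, since all other induced cycles have length $6$.
\item $b(G)=0$, $c^*(G)=6$, and $|V(G)|=2k+5$.
\end{itemize}
In any order of decreasing distance from $H$, the vertex $z$ is colored first. Each $p_j^2$ and $p_j^1$ has exactly one later neighbor, namely its neighbor one layer closer to $H$, because no layer contains an edge. Hence $t=1$, while your inequality $t(c^*(G)-1)\ge |V(G)|-h(G)-b(G)$ requires $5t\ge 2k+1$.

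Consequences for your plan:
\begin{itemize}
\item The $k$ bad runs $p_j^2,p_j^1$ all start next to the same good vertex $z$ and end in $H$, so no injective charging of runs to good vertices exists.
\item There are no same-layer edges, so the local-swap fallback is unavailable.
\item Even with exact multiplicities and lists of size exactly the degree, this order certifies only $|L(z)|\cdot 2=2k$ colorings, against the required $2^{(2k+1)/5+1}$.
\end{itemize}
The underlying issue is that layered orders postpone closing cycles, so many parallel ears end up sharing a single vertex with two later neighbors.

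\textbf{Missing idea: build the order ear by ear, not layer by layer.} This is what the paper does.
\begin{itemize}
\item Grow a processed set starting from $H$. Repeatedly choose an edge $uv$ with $u$ processed and $v$ not.
\item If $uv$ is a bridge, process $v$ alone. This happens at most $b(G)$ times.
\item Otherwise, take a shortest $u$--$v$ path $u,w_1,\dots,w_p,v$ in $G-uv$. Process the still-unprocessed $w_i$ in order, then $v$.
\item Together with $uv$, this path is an induced cycle. So $p\le c^*(G)-2$, the step adds at most $c^*(G)-1$ new vertices, and $v$ has the two processed neighbors $u$ and $w_p$.
\end{itemize}
Coloring in the reverse of this order gives each such $v$ two choices and every vertex of $G-H$ at least one. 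The charging is injective by construction: one good vertex per non-bridge step. In the example above, each further path closes a new $6$-cycle, yielding $k-1$ good vertices.

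\textbf{Rubin graph step.} Your one-line argument for two $L'$-colorings of $H$ is too thin. A color missing from a neighbor's list gives one coloring immediately. A second coloring needs a genuine case analysis, and the hardest case is an induced even cycle whose lists all have size $2$ but are not all equal.
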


\noindent
At the end of this section we give several graphs on which the bound given by~\Cref{thm:quantitative-degree-choosability-theorem} is precisely tight. First, we focus on the proof.
The first intermediate step will turn out to be responsible for the additive constant appearing in the exponent in~\Cref{thm:quantitative-degree-choosability-theorem}.

\begin{restatable}{proposition}{rubinsubgraphstwocolorings}\label{prop:rubin-subgraphs-two-colorings}
    Let $H$ be a Rubin graph, and let $L$ be any degree-list assignment. Then~$H$ admits at least~$2$ $L$-colorings.
\end{restatable}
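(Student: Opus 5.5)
We need to show that a Rubin graph $H$ (an even cycle $C_{2m}$, $m\ge 2$, with at most one chord) has at least two $L$-colorings for every degree-list assignment $L$. We may truncate each list to exactly $\deg(v)$ colors, since this only removes colorings. So cycle vertices get lists of size $2$, and the (at most two) chord endpoints get lists of size $3$. The plan is to split on whether $H$ has a chord and, for the chordless cycle, on whether all lists are equal.

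\case{1} $H$ is an even cycle $v_1 v_2 \cdots v_{2m}$ with no chord. If all lists equal some $\{a,b\}$, the two proper $2$-colorings (alternating $a,b$ and alternating $b,a$) are distinct $L$-colorings. Otherwise some adjacent pair, say $v_{2m}$ and $v_1$, has $L(v_1)\neq L(v_{2m})$. Choose $c\in L(v_1)\setminus L(v_{2m})$.

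The key claim is that for every choice of a color $c_1 \in L(v_1)$ (there are two), the greedy coloring $v_1,v_2,\dots,v_{2m}$ succeeds, provided we pick colors forced along the path. Each of $v_2,\dots,v_{2m-1}$ has one colored neighbor, so it always has an available color. The only possible failure is at $v_{2m}$, which sees both $v_{2m-1}$ and $v_1$. When $c_1=c$, the color of $v_1$ is not in $L(v_{2m})$, so $v_{2m}$ has an available color. This gives one coloring. For the second coloring, I would argue by the standard parity argument: start with $v_1$ colored by the other color $c'$. If greedy along the path (each vertex forced, or free if its list avoids the previous color) fails at $v_{2m}$, then instead go around the other direction starting from $v_{2m}$, choosing for $v_{2m}$ a color of $L(v_{2m})\setminus L(v_1)$ if one exists. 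A cleaner route is to count directly: the number of $L$-colorings of the path $v_1\cdots v_{2m}$ is at least $2$ for each end, and we need at least two of these to be compatible on the edge $v_1v_{2m}$. The precise counting here is the main obstacle. I would handle it by showing that if a coloring with $\phi(v_1)=c'$ is impossible, then all lists are forced equal. This uses the fact that a failure means each $v_i$ was forced, which forces $L(v_i)=\{\phi(v_{i-1}),\phi(v_i)\}$ along the whole cycle. By parity of the even cycle, this then contradicts $L(v_1)\neq L(v_{2m})$, or else yields a second coloring via the reversed traversal.

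\case{2} $H$ has exactly one chord $xy$, with $|L(x)|=|L(y)|=3$. The chord splits $H$ into two cycles through $xy$, and since the total length is even, both have the same parity. If both are even, delete the chord: we get an even cycle in which $x$ and $y$ have lists of size $3\ge 2$. I would color $x$ first with a color $c_x$, then $y$ with a color avoiding $c_x$, leaving at least two choices of $c_x$. After that, the remaining vertices form two paths whose ends are precolored, and I need each path between two precolored vertices to be completable. If both cycles are odd, I would use the triangle-like structure the same way.

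Concretely, I would order the vertices so that greedy coloring works as in the argument for \Cref{prop:rubin-subgraphs}. Choose a degree-$2$ neighbor $w$ of $x$ and make it rightmost, using a BFS order from $w$ of $H-w$ with the edge structure arranged so that every vertex other than $w$ has an uncolored neighbor when it is colored. Then make $x$ the leftmost vertex colored, with the two choices at the start giving two distinct colorings. Since $x$ has a list of size $3$ and degree $3$, it must be handled so that $w$ still has a free color at the end. This fails exactly when the two colored neighbors of $w$ exhaust $L(w)$. I would resolve this by using the extra slack at $y$, which has one more list color than the number of its colored neighbors at the time it is colored, to steer away from the bad case. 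I expect this final-vertex failure analysis, in both cases, to be the main technical step.
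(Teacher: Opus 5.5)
There are two genuine gaps.

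\textbf{Chordless case.} The claim you intend to prove is false: that if no $L$-coloring with $\phi(v_1)=c'$ exists, then all lists are equal. On $C_4$ take $L(v_1)=\{a,b\}$, $L(v_2)=\{b,e\}$, $L(v_3)=\{d,e\}$, $L(v_4)=\{b,d\}$. Then $c=a$ and $c'=b$. Setting $v_1=b$ forces $v_2=e$ and $v_3=d$, which leaves nothing for $v_4$, yet the lists differ. The ``reversed traversal'' fallback does not rescue this. Starting from $v_4=d$, the only color in $L(v_4)\setminus L(v_1)$, yields $(a,b,e,d)$, and the forward $c$-traversal may produce that same coloring.

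The missing step is to use forcedness of \emph{both} traversals.
\begin{itemize}
\item If the greedy coloring $\psi$ with $\psi(v_1)=c$ ever has two available colors, you already have two colorings. At $v_{2m}$ the only possible obstruction is $\psi(v_{2m-1})$, since $c\notin L(v_{2m})$. So you may assume $\psi(v_{i-1})\in L(v_i)$ for $2\le i\le 2m$.
\item If the $c'$-traversal has a free choice at some $v_i$ with $2\le i\le 2m-1$, that freedom propagates. Then $v_{2m-1}$ can take either color of its list, so it can avoid the color that would block $v_{2m}$, and the $c'$-coloring completes. So you may assume $\phi(v_{i-1})\in L(v_i)$ for $2\le i\le 2m-1$.
\item Since $\psi(v_1)\neq\phi(v_1)$, induction gives $L(v_i)=\{\psi(v_{i-1}),\phi(v_{i-1})\}=\{c,c'\}$, and $\psi$ alternates $c,c'$.
\item Hence $\psi(v_{2m-1})=c\in L(v_{2m})$, contradicting the choice of $c$.
\end{itemize}
With this repair, your chordless argument is a tidy alternative to the paper's. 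The paper instead first uses its Cases~3--5 to reduce to $L(v_1)=L(v_2)=\{a,b\}$, $L(v_n)=\{b,c\}$, and then compares greedy traversals run in opposite directions.

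\textbf{Chord case.} This case is not proved; you leave the final-vertex analysis open yourself, and the sketched routes do not close it. Precoloring $x,y$ and then completing the two paths is not automatic. In $C_6$ plus chord $v_1v_4$ with $L(v_2)=\{1,4\}$ and $L(v_3)=\{2,4\}$, the choice $v_1=1$, $v_4=2$ cannot be completed. You give no rule for choosing $c_x,c_y$ that both guarantees completion and leaves two options. The ``slack at $y$'' is just the slack every non-final vertex has, and it gives no control over the last vertex.

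The missing idea is local, at a chord endpoint $x$ whose cycle neighbors $w_1,w_2$ both have degree $2$. Since $|L(x)|=3>2=|L(w_j)|$, one of two situations holds.
\begin{itemize}
\item Some $\gamma\in L(x)$ lies in neither $L(w_1)$ nor $L(w_2)$. Color $x$ with $\gamma$ and go greedily around the cycle, starting at $w_1$ and ending at $w_2$. Every vertex before $w_2$ has an uncolored successor, so it always has a color. The last vertex $w_2$ is saved because $\gamma\notin L(w_2)$. And $w_1$ has two free choices, giving two colorings.
\item There are distinct $\alpha\in L(x)\setminus L(w_1)$ and $\beta\in L(x)\setminus L(w_2)$. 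Color $x$ with $\beta$ and go around ending at $w_2$. Separately, color $x$ with $\alpha$ and go around the other way, ending at $w_1$. These two colorings differ at $x$.
\end{itemize}
This is exactly the paper's Cases~3 and~4, and it needs no distinction between the parities of the two cycles through the chord.
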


\noindent
The proof is by a straightforward but lengthy case-analysis; see~\Cref{appendix:degree-choice}. 
We now state the main step of the argument, which is analogous to~\Cref{lem-highlow2,lemma:non-cutset-ordering}.

\begin{restatable}{lemma}{degreechoosabilityorderinglemma}\label{lemma:degree-choice:main-ordering}
    Let $G$ be a connected graph which is not a Gallai tree, and let $H$ be any minimum Rubin subgraph of $G$. Then there exists an ordering $\sigma$ of $V(G)$ with the following properties.
    \begin{enumerate}
        \item Every vertex of $H$ is ordered to the left of any vertex of $G - H$.
        \item For every vertex $v$ of $G - H$, we have $\deg_G^\ell(v) \geq 1$.
        \item Let $\vlow \coloneqq \{ v \in V(G) \setminus V(H) : \deg_G^\ell(v) \geq 2\}$. Then $|\vlow| \geq \frac{|V(G)| - \minrubinnumberG{G} - \bridgesG{G}}{\inducedcyclelengthG{G} - 1}$.
    \end{enumerate}
\end{restatable}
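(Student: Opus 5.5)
We build $\sigma$ greedily, growing a connected ``processed'' set $S$ that starts as $V(H)$. The vertices of $H$ are placed first, in any order, so Property (1) holds. Since $G$ is connected, as long as $S \neq V(G)$ some unprocessed vertex has a neighbor in $S$. Every appended vertex will have a neighbor in $S$ when it is appended, which gives Property (2). The only work is to choose the order so that many appended vertices have at least two left neighbors.

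Each step appends a vertex or a short block of vertices by the following rule.

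\textbf{Rule (a).} If some unprocessed vertex $w$ has at least two neighbors in $S$, append $w$; it contributes to $\vlow$ at cost one vertex.

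\textbf{Rule (b).} Otherwise every unprocessed vertex has at most one neighbor in $S$. Pick an unprocessed $x$ adjacent to some $s \in S$.
\begin{itemize}
\item If $sx$ is a bridge of $G$, append $x$ alone. Such steps are charged to $\bridgesG{G}$, and each bridge is charged at most once.
\item If $sx$ is not a bridge, then $sx$ lies on a cycle. Let $C$ be a shortest cycle through $sx$ that leaves $S$ at $x$ and returns to $S$. Shortcut $C$ so that it becomes an induced path $P = x = p_1, p_2, \dots, p_t$ of unprocessed vertices, with $p_t$ adjacent to a vertex of $S$ different from (or in addition to) the one used by $p_1$, and with the vertices of $S$ touched by $P$ joined through $S$. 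Append $p_1, \dots, p_t$ in order. Then $p_1, \dots, p_{t-1}$ have left degree at least one and $p_t$ has left degree at least two (its predecessor on $P$ plus its neighbor in $S$).
\end{itemize}

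To bound the block in rule (b), use that $S$ is connected. Take a shortest path $Q$ in $G[S]$ between the $S$-endpoints of $P$, and combine it with $P$. Choosing $P$ and $Q$ minimal gives an induced cycle of length at least $t+1$, so $t \le \inducedcyclelengthG{G} - 1$. Each rule (b) block therefore spends at most $\inducedcyclelengthG{G}-1$ vertices and produces at least one vertex of $\vlow$. Rule (a) steps are even better. Summing over all steps, the vertices outside $H$ not covered by bridge steps number at most $(\inducedcyclelengthG{G}-1)|\vlow|$. Rearranging gives the bound in Property (3).

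The main obstacle is making the cycle argument rigorous. Two things must be checked. First, in case (b) there really is a path through unprocessed vertices that returns to $S$ at a second point. This needs $sx$ not to be a bridge, and needs that a return only to $s$ itself can be redirected or also yields a vertex of left degree two. Second, the resulting cycle $P \cup Q$ can be made induced. Chords between $P$ and $S$ are excluded by the rule (b) hypothesis, because interior vertices of $P$ have at most one neighbor in $S$ and the first such contact can be taken as $p_t$. Chords within $Q$ are excluded by taking $Q$ shortest.

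I would expect minimality of $H$ to enter when $t$ is close to the bound. A long induced even cycle, or an even cycle with one chord, inside $P \cup Q$ would be a Rubin subgraph. Minimality then limits its size, and may be needed to exclude degenerate cases where the return vertex coincides with the start. If this becomes awkward, I would instead argue through the block structure: every non-bridge edge $sx$ lies in a 2-connected block. In that block $x$ and $s$ lie on an induced cycle, and a shortest return path gives the needed $p_t$.
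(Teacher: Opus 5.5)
Your argument is correct. It shares the paper's skeleton: grow the ordering from $H$, charge bridge steps to $\bridgesG{G}$, and in every other step append vertices that close an induced cycle, so that at most $\inducedcyclelengthG{G}-1$ new vertices buy at least one vertex of left degree at least $2$. Where you differ is in how the induced cycle is found.

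\textbf{The paper's route.} It uses no priority rule. For any non-bridge edge $uv$ with $u$ ordered and $v$ unordered, it takes a shortest $uv$-path $u,w_1,\dots,w_p,v$ in $G-uv$. This path may run through already-ordered vertices, which are simply skipped when appending. The path together with $uv$ is automatically an induced cycle, so at most $p+1\le \inducedcyclelengthG{G}-1$ vertices are appended. The vertex $v$ gets the two left neighbors $u$ and $w_p$.

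\textbf{Your route.} You keep each appended block a genuine ear of new vertices. That is why you need three extra ingredients:
\begin{itemize}
\item the exhaustion of rule (a), so that both ends of the ear have a unique neighbor in $S$ and no chords go into $S$;
\item the invariant that $G[S]$ is connected;
\item a shortest closing path $Q$ inside $G[S]$.
\end{itemize}
In exchange, rule (a) harvests left-degree-$2$ vertices at unit cost, although the worst-case bound is the same as the paper's.

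\textbf{The loose ends you flag close immediately.} Let $U$ be the set of unprocessed vertices. Take $P$ to be a shortest path in $G[U]$ from $x$ to the set of vertices of $U\setminus\{x\}$ that have a neighbor in $S$.
\begin{itemize}
\item This set is reachable. Since $sx$ is not a bridge, there is a path from $x$ to $s$ in $G-sx$. The vertex just before this path first enters $S$ lies in $U$, and it is not $x$, because $x$ has no second $S$-neighbor under rule (b). Hence $t\ge 2$.
\item The interior vertices of $P$ then have no $S$-neighbors, and $P$ is induced.
\item A return to $s$ itself is harmless. In that case $s,p_1,\dots,p_t,s$ is an induced cycle of length $t+1$, and $p_t$ still has the two left neighbors $p_{t-1}$ and $s$.
\end{itemize}

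\textbf{Minimality of $H$.} It plays no role in constructing the ordering; it only ensures $|V(H)|=\minrubinnumberG{G}$. The bound on $t$ comes directly from the definition of $\inducedcyclelengthG{G}$, so neither your speculation about Rubin subgraphs inside $P\cup Q$ nor the block-structure fallback is needed.
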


\smallskip
\noindent
Our proof is algorithmic, and pseudocode is presented as Algorithm~\ref{alg:degree-choice:main-ordering-alg}.
\begin{proof}[Proof of~\Cref{lemma:degree-choice:main-ordering}]
    Let $H$ be a minimum Rubin subgraph of $G$.
    We will build a vertex ordering $\sigma$ from left-to-right. We begin by ordering the vertices of $H$ arbitrarily. We will order the remaining vertices by appending to the right of this order, so that Property (1) of the lemma is satisfied. 
    
    We now say that vertices already added to our ordering $\sigma$ are \emph{ordered} and all other vertices are \emph{unordered}. While not all vertices are ordered, we select an edge $uv$ with one ordered endvertex $u$ and one unordered endvertex $v$. There is always an available choice, since $G$ is connected.
    If the edge $uv$ is a bridge, we say that $uv$ was a \emph{bad choice}; otherwise, $uv$ was a \emph{good choice}. We now describe how to add vertices to our ordering according to these two possibilities.
    
    First suppose that $uv$ was a bad choice; see Lines~\ref{line:bridge-beginning}-\ref{line:bridge-end} in~\Cref{alg:degree-choice:main-ordering-alg}. We add $v$ to our ordering (on the right). 
    Notice that we have maintained Property (2) of the lemma, since $u$ was already ordered.

    Now suppose instead that $uv$ was a good choice; see Lines~\ref{line:good-choice-beginning}-\ref{line:good-choice-end} in~\Cref{alg:degree-choice:main-ordering-alg}. 
    Let $u, w_1, \ldots, w_p, v$ be a shortest $uv$-path in $G - uv$. Such a path exists by definition of a good choice ($uv$ is not a bridge).
    For each $i \in [p]$, we add $w_i$ to our order, unless it is already ordered.
    Lastly, we add $v$ to our order.
    Notice again that we have maintained Property~(2) of the lemma, since $w_{i - 1}$ is added to the order before $w_i$ for every $i \geq 1$.
    Moreover, $\deg_G^\ell(v) \geq 2$ since both $u$ and $w_p$ are neighbors of $v$ appearing to its left in $\sigma$.
    
    This completes the ordering algorithm. It remains to show that Property (3) is satisfied.
    We observe first that upon each bad choice, exactly one unordered vertex becomes ordered, and we cannot make more than $\bridgesG{G}$ bad choices.
    Then the remaining $|V(G)| - \minrubinnumberG{G} - \bridgesG{G}$ vertices are ordered via good choices. We have already shown that each good choice produces at least one vertex in~$\vlow$.
    Then it suffices to show that upon each good choice, at most $\inducedcyclelengthG{G} - 1$ unordered vertices become ordered.

    To see this, observe that upon a good choice $uv$, by combining the edge $uv$ with the path $u, w_1, \ldots, w_p, v$ we obtain a cycle in $G$. 
    Moreover, this cycle is induced, since $u, w_1, \ldots, w_p, v$ was chosen to be a shortest $uv$-path in $G - uv$.
    Thus, $p \leq \inducedcyclelengthG{G} - 2$, and the result follows.
\end{proof}
\begin{algorithm}
    \caption{The algorithm used for the proof of~\Cref{lemma:degree-choice:main-ordering}.}\label{alg:degree-choice:main-ordering-alg}
    \SetKwInOut{input}{Input}\SetKwInOut{output}{Output} 
    \SetKw{Continue}{continue}

    \input{ A connected graph $G$ which is not a Gallai tree, and a minimum Rubin subgraph $H$.}
    \output{ An ordering $\sigma$ of the vertices of $G$.}

    $\sigma \leftarrow$ An arbitrary ordering of $V(H)$\;
    \While{\text{Not all vertices are ordered}}{
        $uv \leftarrow$ an edge with $u$ ordered and $v$ unordered\;
        \If{$uv$ is a bridge}{\label{line:bridge-beginning}
            $\sigma \leftarrow \sigma + v$\;
            \Continue\;\label{line:bridge-end}
        }
        $u, w_1, \ldots, w_p, v \leftarrow$ a shortest $uv$-path in $G - uv$\;\label{line:good-choice-beginning}
        \For{$i = 1, \ldots, p$}{
            \If{$w_i$ is unordered}{
                $\sigma \leftarrow \sigma + w_i$\;
            }
        }
        $\sigma \leftarrow \sigma + v$\;\label{line:good-choice-end}
    }
    \Return $\sigma$\;

\end{algorithm}

\noindent
The proof of the main result,~\Cref{thm:quantitative-degree-choosability-theorem}, follows by combining~\Cref{lemma:degree-choice:main-ordering} and~\Cref{prop:rubin-subgraphs-two-colorings}. The argument is similar to the proof of~\Cref{lem-kdeg2}.

\begin{proof}[Proof of~\Cref{thm:quantitative-degree-choosability-theorem}.]
    Let $G$ be a connected graph which is not a Gallai tree, and let $H$ be a minimum Rubin subgraph. Let $\sigma$ be an ordering of the vertices given by~\Cref{lemma:degree-choice:main-ordering}. Let $\vlow$ be the set given by Property (3) of~\Cref{lemma:degree-choice:main-ordering}, and let $L_{G - H}$ be the restriction of $L$ to $V(G - H)$.
    Suppose we color the vertices of $G - H$ in the reverse of $\sigma$ (i.e., right-to-left). %By Property (1) of~\Cref{lemma:degree-choice:main-ordering}, we will not encounter any vertices of $H$ during this process.
    By Property (2) of~\Cref{lemma:degree-choice:main-ordering}, we have that for every vertex $v$ of $G - H$, there is some available color, and moreover for every $v \in \vlow$ there are at least $2$ colors available. 
    Thus, using Property (3), the number of possible greedy $L_{G-H}$-colorings of $G - H$ is at least
    \[
        2^{|\vlow|} \geq 2^\frac{|V(G)| - |V(H)| - \bridgesG{G}}{\inducedcyclelengthG{G} - 1} = 2^\frac{|V(G)| - \minrubinnumberG{G} - \bridgesG{G}}{\inducedcyclelengthG{G} - 1}.
    \]
    Now we complete the proof by showing that every fixed $L_{G - H}$-coloring of $G - H$ can be extended to form two unique $L$-colorings of $G$.
    Fix an $L_{G - H}$-coloring of $G-H$, and call this coloring $\phi$. For each $v \in V(H)$, let $L^*(v) \coloneqq L(v) \setminus \{ \phi(u) : u \in N_G(v) \setminus V(H) \}$. Notice that since $|L(v)| \geq \deg_G(v)$, we have $|L^*(v)| \geq \deg_H(v)$ for every $v \in V(H)$. Thus, $L^*$ is a degree-list assignment for $H$.
    Furthermore, any $L^*$-coloring of $H$ can be combined with $\phi$ to form a unique $L$-coloring of $G$.
    By~\Cref{prop:rubin-subgraphs-two-colorings}, there are at least $2$ $L^*$-colorings of $H$. This completes the proof.
\end{proof}

\begin{figure}
    \centering
    \begin{tikzpicture}
		\node[rvertex, label = {right:{$\{r,b\}$}}](u0) at (0:1){};
		\node[rvertex, label = {{$\{r,b\}$}}](u2) at (60*2:1){};
		\node[rvertex, label = {below:$\{r,b\}$}](u4) at (60*4:1){};
		\node[bvertex, label = {$\{r,b\}$}](u1) at (60*1:1){};
		\node[bvertex, label = {left:$\{r,b\}$}](u3) at (60*3:1){};
		\node[bvertex, label = {below:$\{r,b\}$}](u5) at (60*5:1){};
	\draw[edge](u0) -- (u1) -- (u2) -- (u3) -- (u4) -- (u5) -- (u0);

	\begin{scope}[shift = {(5,0)}]

		\node[rvertex, label = {{$\{r,b\}$}}](u0) at (60*1:1){};
		\node[rvertex, label = {left:{$\{r,b\}$}}](u2) at (60*3:1){};
		\node[rvertex, label = {below:$\{r,b\}$}](u4) at (60*5:1){};
		\node[bvertex, label = {$\{r,b\}$}](u1) at (60*2:1){};
		\node[bvertex, label = {below:$\{r,b\}$}](u3) at (60*4:1){};
		\node[bvertex, label = {right:$\{r,b\}$}](u5) at (60*0:1){};
	\draw[edge](u0) -- (u1) -- (u2) -- (u3) -- (u4) -- (u5) -- (u0);
	\end{scope}
\end{tikzpicture}
    \caption{\label{fig:even-cycle}The two ways of coloring an even cycle with identical lists of length two.}
\end{figure}%\vspace{25pt}

\begin{figure}
    \centering
    \begin{tikzpicture}[x=1.2cm]
		\node[rvertex, label = {{$\{r\}$}}](u0) at (0,0){};
		\node[bvertex, label = {{$\{r,b\}$}}](u1) at (1,0){};
		\node[rvertex, label = {{$\{r,b,w\}$}}](u2) at (2,0){};
		\node[evertex, label = {{$\{r,w\}$}}](u3) at (3,0){};
		\node[rvertex, label = {{$\{r,w\}$}}](u4) at (4,0){};
		\node[evertex, label = {{$\{r,w\}$}}](u5) at (5,0){};

	\draw[edge](u0) -- (u1) -- (u2) -- (u3) -- (u4) -- (u5) to [bend left = 30] (u2);

	\begin{scope}[shift = {(6,0)}]
		\node[rvertex, label = {{$\{r\}$}}](u0) at (0,0){};
		\node[bvertex, label = {{$\{r,b\}$}}](u1) at (1,0){};
		\node[evertex, label = {{$\{r,b,w\}$}}](u2) at (2,0){};
		\node[rvertex, label = {{$\{r,w\}$}}](u3) at (3,0){};
		\node[evertex, label = {{$\{r,w\}$}}](u4) at (4,0){};
		\node[rvertex, label = {{$\{r,w\}$}}](u5) at (5,0){};

	\draw[edge](u0) -- (u1) -- (u2) -- (u3) -- (u4) -- (u5) to [bend left = 30] (u2);

	\end{scope}
\end{tikzpicture}
    \caption{\label{fig:many-bridges}A graph and degree-list assignment such that only two colorings are possible.}
\end{figure}

\noindent
To conclude, we remark upon the parameters $\minrubinnumberG{G}$, $\bridgesG{G}$, and $\inducedcyclelengthG{G}$ appearing in the bound of~\Cref{thm:quantitative-degree-choosability-theorem}. While there are likely graphs on which this bound is not tight, we know that it cannot be improved in general. We give two examples.

Consider first an even cycle $G$ with a list assignment $L$ consisting of identical lists of length~$2$. This graph admits exactly two $L$-colorings (see~\Cref{fig:even-cycle}), which matches the bound of~\Cref{thm:quantitative-degree-choosability-theorem}; notice that $\minrubinnumberG{G} = |V(G)|$.

A second example is formed by considering a path $v_1, \ldots, v_n$ on $n$ vertices, where $n\geq 6$ is an even integer. To form $G$, add an additional edge $v_{n - 3}v_n$. The case where $n = 6$ is depicted in~\Cref{fig:many-bridges}. This graph has $\minrubinnumberG{G} = \inducedcyclelengthG{G} = 4$, and $\bridgesG{G} = n - 4$. Hence,~\Cref{thm:quantitative-degree-choosability-theorem} guarantees only $2$ colorings.
A degree-list assignment $L$ matching this bound is defined as follows. Let $L(v_1) = \{r\}$, and for $1 < i < n - 3$, let $L(v_i) = \{r, b\}$. Let $L(v_{n - 3}) = \{r,b,w\}$, and finally for $i > n - 3$ let $L(v_i) = \{r, w\}$. In every $L$-coloring $\phi$ of~$G$, we have $\phi(v_i) = \{r\}$ for every odd $i < n - 3$, and $\phi(v_i) = b$ for every even $i < n - 3$. Since $n - 3$ is odd, it follows that $\phi(v_{n - 3}) \in \{b, w\}$. Thus, there are exactly two $L$-colorings, defined by the two ways of coloring the even cycle $v_{n-3},v_{n-2},v_{n-1},v_n,v_{n-3}$. 

\subsection{A New Proof of The Degree-Choosability Theorem}\label{sec:degree-choice-new-proof}

Recall from our discussion at the beginning of~\Cref{sec:quant-degree-choice} that the degree-choosability theorem (\Cref{thm:degree-choosability}) may be proven by first showing that Rubin graphs are degree-choosable (\cite{erdos1979choosability} or~\Cref{prop:rubin-subgraphs-two-colorings}), then by showing that it suffices for $G$ to contain a Rubin subgraph (\Cref{prop:rubin-subgraphs}~\cite{erdos1979choosability}), and finally by showing Gallai trees are exactly those connected graphs containing no Rubin subgraph.

Here, we will give a proof which omits the last step. In particular, we will instead prove~\Cref{thm:degree-choosability} by combining~\Cref{prop:rubin-subgraphs} with an examination of \emph{chordal} graphs, i.e., those graphs with no induced cycles of length greater than three. 
Our technique is inspired by a proof of Brooks' Theorem given recently by Feghali~\cite{feghali2024dirac}.

We begin by showing that every chordal graph has a Rubin subgraph, unless it is a Gallai tree. In fact, we show that these graphs must contain a \emph{diamond} (a $K_4$ minus one edge).

\begin{lemma}\label{lemma:chordal-graphs-degree-choosable}
    Let $G$ be a connected chordal graph. If $G$ is not a Gallai tree, then $G$ contains a diamond as an induced subgraph. 
\end{lemma}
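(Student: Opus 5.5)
We reduce to a single block. If $G$ is not a Gallai tree, some block $B$ of $G$ is neither a clique nor an odd cycle. Every block of a chordal graph is chordal, since induced subgraphs of chordal graphs are chordal. An induced diamond in $B$ is also an induced diamond in $G$, because $B$ is an induced subgraph of $G$. So it suffices to prove the following: a $2$-connected chordal graph $B$ that is not complete contains an induced diamond. Blocks with two vertices are cliques, and a chordal cycle is a triangle, which is also a clique. Hence $B$ is a $2$-connected non-complete chordal graph on at least $4$ vertices, and "not an odd cycle" imposes nothing further.

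\emph{Finding a chordless path of length two.} Since $B$ is connected and not complete, it has two non-adjacent vertices at distance exactly $2$. Hence there is an induced path $x,y,z$ with $xz \notin E(B)$.

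\emph{Closing the path.} Since $B$ is $2$-connected, $B - y$ is connected, so there is an $xz$-path avoiding $y$. Take a shortest such path $P$ in $B - y$; it is an induced path of length at least $2$. Consider the cycle $C$ formed by $P$ together with $y$. Among all choices of an induced $x y z$ and such a path, choose one minimizing $|P|$.

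\emph{Using chordality.} Let $P = x, p_1, \ldots, p_m, z$. The cycle $C$ has length at least $4$, so it has a chord. Since $P$ is induced and $xz \notin E(B)$, every chord goes from $y$ to some $p_i$. Let $p_i$ be a neighbor of $y$ on $P$, chosen so that the subcycle $y, x, p_1, \ldots, p_i, y$ contains no further chords from $y$, i.e.\ $p_i$ is the first neighbor of $y$ along $P$. This subcycle is induced in $B$, so by chordality it is a triangle. Therefore $i=1$ and $p_1$ is adjacent to both $x$ and $y$. Symmetrically, the last vertex $p_m$ is adjacent to $y$ and $z$.

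If $m = 1$, then $x, y, z, p_1$ induce a diamond: $p_1$ is adjacent to $x$, $y$, and $z$, the edges $xy$ and $yz$ are present, and $xz$ is missing. This is $K_4$ minus the edge $xz$.

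If $m \ge 2$, then $x p_2 \notin E(B)$ because $P$ is induced. If $y p_2 \in E(B)$, then $\{x, p_1, y, p_2\}$ induces a diamond whose only missing edge is $x p_2$. Otherwise, $y p_2 \notin E(B)$, and $y, p_1, p_2$ is an induced path of length two. Applying the argument above to this path, with the shorter closing path $p_2, \ldots, p_m, y$ running through the remaining vertices of $C$, contradicts the minimality of $|P|$.

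The main obstacle is arranging this final minimality or induction cleanly. A simpler route is to choose the induced path $x,y,z$ and the path $P$ so that the cycle $C$ is as short as possible, and then argue directly. Chordality makes $y$ adjacent to all of $p_1, \ldots, p_m$: otherwise a chordless subcycle of length at least $4$ appears between two consecutive neighbors of $y$. Then $y$ together with $x, p_1, p_2$ gives a diamond when $m \ge 2$, and $y$ together with $x, p_1, z$ gives a diamond when $m=1$. This is the version I would write up.
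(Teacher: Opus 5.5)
Your final version is correct and is essentially the paper's proof. It takes $x,y,z$ with $xz\notin E(B)$ and a shortest $xz$-path $P$ in $B-y$, uses chordality to force $y$ to be adjacent to every vertex of $P$, and reads off the diamond on $\{x,p_1,p_2,y\}$, or on $\{x,p_1,z,y\}$ when $m=1$; your consecutive-neighbors argument spells out a step the paper only asserts.

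The minimality of $C$ is not needed, since any choice of the induced path and shortest closing path works. Your first, minimality-based version is also valid, because $p_2,\ldots,p_m,y$ is a $yp_2$-path in $B-p_1$ with fewer edges than $P$.
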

\begin{proof}
    Since $G$ is not a Gallai tree, let $B$ be a block of $G$ which is neither a clique nor an odd cycle. Since $B$ is not a clique, let $u, v \in V(B)$ be a pair of vertices with distance $2$, and let $w$ be a mutual neighbor of $u$ and $v$. Let $P = u, \ldots, v$ be a shortest $uv$-path in $B - w$. Note that by our choice of $u$ and $v$, there is at least one internal vertex on $P$. Furthermore, since $P$ is a shortest path, it is an induced path.
    We observe further that $u, P, v, w, u$ is a cycle in $G$ of length at least $4$, and its only possible chords are edges linking $w$ with the internal vertices of $P$. 
    Since $G$ is chordal, every such chord exists, i.e., $w$ is adjacent to every vertex on $P$.
    Then let $u, z_1, z_2$ be the first three vertices of $P$ (it is possible that $z_2 = v$). Since $w$ is adjacent to each of these vertices, the induced subgraph $G[\{u, z_1, z_2, w\}]$ is a diamond.
\end{proof}

\noindent
Before completing our proof, we make two useful observations. Both identify additional hypotheses under which choosability is much easier to prove.

\begin{lemma}\label{lemma:degree-plus-one-choosable}
    Let $G$ be a connected graph, and let $L$ be a list assignment with $|L(v)| \geq \deg(v)$ for every vertex $v$. If there exists a vertex $w$ with $|L(w)| > \deg(w)$, then $G$ admits an $L$-coloring.
\end{lemma}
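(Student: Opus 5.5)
The plan is to color greedily in an order that leaves $w$ for last, in the same way as the proof of~\Cref{prop:rubin-subgraphs} and the arguments built on~\Cref{lemma:non-cutset-ordering}. Since $G$ is connected, we perform a breadth-first traversal of $G$ starting from $w$. We then take the reverse of the resulting order, so that $w$ is rightmost. Equivalently, we may apply~\Cref{lemma:non-cutset-ordering} with $X = \emptyset$ and $v = w$. In either form, the key property is that every vertex $u \neq w$ has at least one neighbor to its right: its parent in the breadth-first tree, which is closer to $w$. Hence $\deg_G^\ell(u) \leq \deg(u) - 1$ for every $u \neq w$.

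We then color greedily from left to right, assigning to each vertex $u$ any color of $L(u)$ not used on its left neighbors. For $u \neq w$, at most $\deg(u) - 1$ colors are forbidden. Since $|L(u)| \geq \deg(u)$, at least one color remains available. For the final vertex $w$, all of its neighbors are already colored, so at most $\deg(w)$ colors are forbidden. Because $|L(w)| > \deg(w)$, some color of $L(w)$ is still available.

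The resulting coloring is proper by construction and respects the lists, so it is an $L$-coloring. The only point requiring care is the ordering property, namely that every vertex other than $w$ has a right neighbor. This is exactly Property (3) of~\Cref{lemma:non-cutset-ordering} with $X = \emptyset$, and it uses only the connectivity of $G$. I do not expect any real obstacle here.
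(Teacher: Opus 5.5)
Your proof is correct and is essentially the paper's argument: the paper orders the vertices by increasing distance from $w$ with $w$ leftmost and colors from right to left, which is the mirror image of your reversed breadth-first order colored left to right. One small correction: Property (3) of \Cref{lemma:non-cutset-ordering} as stated only gives $\deg_G^\ell(u) < \maxdeg$, which is weaker than the bound $\deg_G^\ell(u) \leq \deg(u) - 1$ you need, so do not say this is ``exactly Property (3)''; instead justify it, as your first paragraph already does, by noting that the breadth-first construction gives every $u \neq w$ a neighbor to its right.
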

\begin{proof}
    Order the vertices left to right, beginning with $w$, in increasing order of distance from $w$. Then greedily color from right to left. This is possible, since every vertex except~$w$ has at least one neighbor ordered to its left, and thus an available color, while $w$ can be colored since its list is larger than its neighborhood.
\end{proof}
\begin{lemma}\label{lemma:2-connected-unequal-degree-choosable}
    Let $G$ be a 2-connected graph, and let $L$ be a list assignment with $|L(v)| \geq \deg(v)$ for every vertex $v$. If the lists assigned by $L$ are not all pairwise equal, then $G$ admits an $L$-coloring.
\end{lemma}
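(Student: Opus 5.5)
We reduce to \Cref{lemma:degree-plus-one-choosable} by spending one carefully chosen color at the start. If some vertex $w$ has $|L(w)| > \deg(w)$, we are done by that lemma directly. So assume $|L(v)| = \deg(v)$ for every $v$. Since the lists are not all equal and $G$ is connected, there is an edge $uv$ with $L(u) \neq L(v)$. Because $|L(u)| = \deg(u)$ and $|L(v)| = \deg(v)$, we cannot have both $L(u) \subseteq L(v)$ and $L(v) \subseteq L(u)$. Swapping names if necessary, we may assume there is a color $c \in L(u) \setminus L(v)$.

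The plan is to color $u$ with $c$, delete it, and update the lists. Let $G' = G - u$, which is connected because $G$ is $2$-connected. For $x \in N(u)$ set $L'(x) = L(x) \setminus \{c\}$, and set $L'(x) = L(x)$ otherwise. Each neighbor $x$ of $u$ loses one from its degree and at most one color from its list, so $|L'(x)| \geq \deg_{G'}(x)$ for every $x \in V(G')$. The vertex $v$ loses a neighbor but no color, since $c \notin L(v)$. Hence $|L'(v)| = \deg_G(v) > \deg_{G'}(v)$.

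Applying \Cref{lemma:degree-plus-one-choosable} to $G'$, $L'$ and $w = v$ gives an $L'$-coloring of $G'$. Adding $\phi(u) = c$ yields an $L$-coloring of $G$: every neighbor of $u$ avoids $c$ by the definition of $L'$, and all other edges are properly colored within $G'$.

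The main point requiring care is the choice of $u$. It needs $c \notin L(v)$ so that $v$ gains slack, and it needs $G - u$ connected so that \Cref{lemma:degree-plus-one-choosable} applies. $2$-connectivity guarantees the second condition for every vertex. The symmetric-difference argument above guarantees the first.
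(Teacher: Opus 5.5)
Your proof is correct and is essentially the paper's argument. The paper fixes $c \in L(u)\setminus L(v)$ on $u$ and then greedily colors the remaining vertices in reverse order of distance from $v$ in $G-u$; applying \Cref{lemma:degree-plus-one-choosable} to $G-u$ with $w=v$ and the reduced lists $L'$ unrolls to exactly that procedure. (Two small steps are unnecessary but harmless: the preliminary reduction to $|L(x)|=\deg(x)$, and the size argument for finding $c$, since $L(u)\neq L(v)$ alone already gives a nonempty difference in one direction.)
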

\begin{proof}
    Since $G$ is connected and the lists assigned by $L$ are not all pairwise equal, there exist adjacent vertices $u, v$ and a color $c$ with $c \in L(u) \setminus L(v)$. Order the vertices left to right, beginning with $v$, in increasing order of distance from $v$ in $G - u$. Order $u$ rightmost. Because $G$ is $2$-connected, all vertices except $v$ have at least one neighbor to the left. Fix color $c$ for $u$, and then color greedily from right to left. As its neighbor $u$ received color $c \notin L(v)$, $v$ will have an available color.
\end{proof}

\noindent
We can now complete our proof of the degree-choosability theorem.

\begin{proof}[Proof of~\Cref{thm:degree-choosability}]
    By~\cite{erdos1979choosability}, Gallai trees are not degree-choosable. Then it suffices to show that if $G$ is a connected graph and not a Gallai tree, $G$ is degree-choosable. Assume otherwise, and let $G$ be a counterexample with a minimum number of vertices. Let $L$ be a list assignment for $G$ with $|L(v)| \geq \deg(v)$ for all $v$, such that $G$ admits no $L$-coloring. Let $B$ be a block of $G$ which is neither a clique nor an odd cycle. By~\Cref{prop:rubin-subgraphs}, $B$ has no Rubin subgraph, as otherwise $G$ would have one as well. Then by~\Cref{lemma:chordal-graphs-degree-choosable}, $B$ is not chordal, so $B$ contains an odd induced cycle $C = v_1, v_2, \ldots, v_k, v_1$ of length $k \geq 5$.

    Since $B \neq C$, assume without loss of generality that $v_1$ has at least one neighbor in $B \setminus C$. Among all such neighbors of $v_1$, let $x$ be one with minimum distance to $C - v_1$ in $B - v_1$. Let $P = x, \ldots, v_i$ be a shortest path from $x$ to $C - v_1$ in $B - v_1$. Let $y$ be $v_i$'s neighbor on~$P$ (it is possible that $y = x$).
    Note that by our choice of $x$, $v_1$ has no neighbors among the internal vertices of $P$. Furthermore, since $P$ was chosen to be a shortest path, $y$ is the only vertex on $P$ with any neighbor on $C - v_1$. 
    
    It will be convenient to assume that $v_1$ and $v_i$ are not adjacent, i.e., $i \notin \{2, k\}$.
    To see that this assumption is safe, we begin by claiming that $y$ must have multiple neighbors on $C - v_1$.
    Otherwise, by combining the path $v_1, x, P, v_i$ with the two $v_iv_1$-paths given by the cycle $C$, we obtain a pair of induced cycles. Since $C$ is odd, one of these cycles must be even, and thus we have found a Rubin subgraph, a contradiction.
    Next, we claim that $y$ is non-adjacent to at least one of $v_2$ and $v_k$. Otherwise, if $y$ neighbors both $v_k$ and $v_2$, then the induced subgraph $G[\{v_k, v_1, v_2, y\}]$ is either a diamond (if $v_1$ and $y$ are adjacent, i.e., if $x = y$) or a $C_4$ (if $v_1$ and $y$ are not adjacent). Both are Rubin subgraphs, a contradiction.
    Thus, since $y$ has multiple neighbors along $C - v_1$ and is non-adjacent to at least one of $v_2$ and $v_k$, there must exist a neighbor of $y$ in $C - \{v_k, v_1, v_2\}$. Choosing this neighbor as $v_i$ satisfies all of the desired properties of $P$.

    We now define an auxiliary graph $H$, and an odd cycle $C_H$ in $H$. See~\Cref{fig:cycle-path-pair}.
    \begin{align*}
        H &= 
        \begin{cases}
            G \setminus \{v_{i+1}, \ldots, v_k\} + v_1v_i \quad\phantom{}\text{ if $i$ is odd.}\\
            G \setminus \{v_2, \ldots, v_{i - 1}\} + v_1v_i \quad\text{ if $i$ is even.}
        \end{cases} \\
        C_H &=
        \begin{cases}
            v_1, C, v_i, v_1 \quad\text{ if $i$ is odd.}\\
            v_i, C, v_1, v_i \quad\text{ if $i$ is even.}
        \end{cases}
    \end{align*}
    % cycle-figure.tex
% Requires in the main document's preamble:
%   \usepackage{tikz}
%   \usepackage{subcaption}
%   \usetikzlibrary{calc}

% ---------- Macro: one copy of the figure ----------
% #1 = +1 for "curve/gray right" version, -1 for "curve/gray left" version
% #2 = parity word only, e.g. "even" or "odd"
\newcommand{\cyclefigure}[2]{%
\begin{tikzpicture}[
  vtx/.style={circle, draw, fill=black, inner sep=0pt, minimum size=4pt},
  scale=0.6, every node/.append style={scale=0.50},
  every node/.style={font=\small}
]

\def\n{9}        % length of C (choose any odd n >= 5)
\def\R{2.1cm}
\pgfmathsetmacro{\side}{#1}
\ifnum#1=1 \def\iidx{5} \else \def\iidx{6} \fi

\foreach \k in {1,...,\n}{
  \pgfmathsetmacro{\ang}{90-(\k-1)*360/\n}
  \coordinate (V\k) at (\ang:\R);
}
\pgfmathtruncatemacro{\nextk}{\iidx+1}
\pgfmathtruncatemacro{\prevk}{\iidx-1}

\ifnum#1=1
  \draw[thick] (V1) \foreach \k in {2,...,\iidx}{ -- (V\k) };
  \draw[thick, gray] (V\iidx) \foreach \k in {\nextk,...,\n}{ -- (V\k) } -- (V1);
\else
  \draw[thick, gray] (V1) \foreach \k in {2,...,\iidx}{ -- (V\k) };
  \draw[thick] (V\iidx) \foreach \k in {\nextk,...,\n}{ -- (V\k) } -- (V1);
\fi

\ifnum#1=1
  \foreach \k in {2,...,\prevk}{ \node[vtx] at (V\k) {}; }
  \foreach \k in {\nextk,...,\n}{ \node[vtx, gray] at (V\k) {}; }
\else
  \foreach \k in {2,...,\prevk}{ \node[vtx, gray] at (V\k) {}; }
  \foreach \k in {\nextk,...,\n}{ \node[vtx] at (V\k) {}; }
\fi
\node[vtx] at (V1) {};
\node[vtx] at (V\iidx) {};

\node[left, yshift=4pt] at (V1) {$v_1$};
\ifnum#1=1
  \node[right] at (V2) {$v_2$};
  \node[left, gray] at (V\n) {$v_k$};
\else
  \node[right, gray] at (V2) {$v_2$};
  \node[left] at (V\n) {$v_k$};
\fi
\ifnum#1=1
  \node[below right, yshift=-4pt] at (V\iidx) {$v_i$};
\else
  \node[below left, yshift=-4pt] at (V\iidx) {$v_i$};
\fi
\node at (0,-3.1) {\footnotesize $(i$ \textrm{#2}$)$};

% --- dashed chord v1 -- v_i ---
\draw[thick, dashed] (V1) -- (V\iidx);

% --- external vertex x, adjacent to v1 ---
\coordinate (X) at (0,3.5);
\node[vtx] at (X) {};
\node[above] at (X) {$x$};
\draw[thick] (V1) -- (X);

% --- path P: now dash-dot, to distinguish it from the dashed v1--v_i chord ---
\coordinate (Bulge) at (\side*2.8,0.8);
\pgfmathsetmacro{\angI}{90-(\iidx-1)*360/\n}
\pgfmathsetmacro{\angY}{\angI+\side*16}
\coordinate (Y) at (\angY:2.9cm);

\draw[thick, dash dot] plot[smooth, tension=1] coordinates {(X) (Bulge) (Y)};
\draw[thick, dash dot] (Y) -- (V\iidx);
\node at ($(Bulge)+(\side*0.55,0.15)$) {$P$};

\node[vtx] at (Y) {};
\node[below] at (Y) {$y$};

% \node at ($(0,0)+(0.68*\side,0.75)$) {$C_H$};

\end{tikzpicture}%
}

% ---------- The actual figure ----------
\begin{figure}[t]
\centering
\begin{subfigure}{0.48\textwidth}
\centering
\cyclefigure{-1}{even}
\end{subfigure}
\hfill
\begin{subfigure}{0.48\textwidth}
\centering
\cyclefigure{1}{odd}
\end{subfigure}
\caption{Our proof of~\Cref{thm:degree-choosability} makes use of an induced odd cycle $C$ of length $k \geq 5$; we have illustrated the case where $k = 9$. The vertex $x$ is a neighbor of $v_1$, and $P$ (dot-dashed) is a shortest path from $x$ to $C - v_1$ not using $v_1$ (it is possible that $y = x$). The vertex $v_i$ is the endpoint of~$P$ on~$C$, and the two subfigures illustrate the cases where $i$ is even (left) or odd (right). The auxiliary graph $H$ considered in the proof is formed by deleting the grayed portion of the cycle and adding the edge $v_1v_i$. $H$ contains an induced odd cycle formed by combining the new edge $v_1v_i$ with the non-deleted portion of the original cycle $C$. The path $P$ is used in the proof to show that $H$ is not a Gallai tree.}
\label{fig:cycle-path-pair}
\end{figure}

    Let $H_1, \ldots, H_q$ be the connected components of $H$, and assume $H_1$ contains $C_H$. Let $B_1$ be the block of $H_1$ containing $C_H$.
    Note that $B_1$ contains both the cycles $C_H$ and $v_1, x, P, v_i, v_1$, so $B_1$ is not an induced odd cycle.
    Moreover, we claim that $B_1$ is not a clique. If $P$ has length greater than 1 then this is immediate, since by our choice of $x$, $v_1$ is not adjacent to any internal vertex of $P$. If $P$ has length $1$, then since $2 < i < k$, $B_1$ must not be a clique as otherwise either $\{v_1, v_2, v_3, x\}$ ($i$~odd) or $\{v_1, v_k, v_{k-1}, x\}$ ($i$~even) induce a diamond in $G$. 
    
    Thus, $H_1$ is not a Gallai tree. For each $j \in [q]$, Let $L_j$ be the restriction of $L$ to $V(H_j)$. Observe that for all $v \in V(H_1)$, $\deg_{H_1}(v) \leq \deg_G(v)$; the only interesting cases are $v_1$ and $v_i$, which have both become endpoints of the edge $v_1v_i$, but which also both had a neighbor deleted when we formed $H$. Consequently, $L_1$ is a degree-list assignment for~$G_1$. Furthermore, $H_1$ has strictly fewer vertices than $G$, so by the minimality of $G$, $H_1$ admits an $L_1$-coloring.
    We now consider $H_j$, for $j > 1$. $H_j$ contains at least one vertex $w$ which neighbors a vertex of $C \setminus H$ in $G$. Then $|L_j(w)| > \deg_{H_j}(w)$, so by~\Cref{lemma:degree-plus-one-choosable}, $H_j$ is $L_j$-choosable.
    Then $H$ is $L_H$-choosable, where $L_H$ is the restriction of $L$ to $H$.

    Fix an $L_H$-coloring $\phi_H$ of $H$. We obtain a final contradiction by finding an $L$-coloring $\phi_G$ of $G$. 
    For each $v \in V(G \setminus C)$, set $\phi_G(v) = \phi_H(v)$. For each $v_1, v_2, \ldots, v_k \in V(C)$, define $L^*(v_j) \vcentcolon = L(v_j) \setminus \{\phi_H(u) \mid uv_j \in E(G) \setminus E(C)\}$. 
    Note that it now suffices to find an $L^*$-coloring of $C$. Notice also that $|L^*(v_j)| \geq 2 = \deg_C(v_j)$ for each $j \in [k]$. 
    We may assume that the lists $L^*$ are all pairwise equal, as otherwise~\Cref{lemma:2-connected-unequal-degree-choosable} provides an $L^*$-coloring of $C$. Then we claim that $|L^*(v_j)| \geq 3$ for all $j \in [k]$, which will complete the proof by~\Cref{lemma:degree-plus-one-choosable}.
    
    Since $C_H$ is an odd cycle in $H$, $|\bigcup_{v \in V(C_H)} \phi_H(v)| \geq 3$. Also, because $\phi_H$ is proper, it follows from our definitions that $\phi_H(v) \in L^*(v)$ for each $v \in V(C_H)$. Then since $V(C_H) \subseteq V(C)$ and all lists are equal, we have $|L^*(v_j)| \geq 3$ for all $j \in [k]$, as desired.
\end{proof}

\section{Degenerate Partitioning}\label{sec:degenerate-partitioning}

In this section, we consider degenerate partitioning, a generalization of coloring. Given a vector $p = (p_1, \dots, p_s)$, for $s \geq 1$, of non-negative integers, we say a graph is \emph{$p$-partitionable} if there is a partition of $V(G)$ into sets $\{V_1,\dots,V_s\}$ so that $G[V_i]$ is $p_i$-degenerate for all $i \in [s]$. We call $\{V_1, \dots, V_s\}$ a \emph{$p$-partition} of $G$.
We consider the following problem.

\begin{quote}
\noindent \partition 
\vspace{.5em}\\
\textbf{Input:} A graph $G = (V,E)$ and integers $p=(p_1, \dots, p_s)$ with $p_i \geq 0$ for $i \in [s]$.
\\
\textbf{Task:} Decide if $G$ is $p$-partitionable.
\end{quote}

\noindent
If $p$ is fixed and not given as part of the input, we denote the problem to determine whether a graph $G$ is $p$-partitionable by \ppartition.
To see that \partition is indeed a generalization of coloring, consider the vector $p = 0^s$. Then \partition corresponds to asking whether $G$ can be partitioned into $s$ independent sets, i.e., whether $G$ is $s$-colorable.

In~\Cref{sec:quant-degenerate-partitioning}, we give a quantitative result showing that for certain vectors $p$ there are exponentially many $p$-partitions.
On the other hand, for many choices of $p$ this is not true.
Indeed, unlike the other settings considered in this paper, it is not even easy to characterize those graphs admitting a single $p$-partition:
Abu-Khzam, Feghali, and Heggernes~\cite{abu2020partitioning} have shown that for some vectors $p$, \ppartition is NP-hard.
In~\Cref{sec:hardness-degenerate}, we build on their result to show NP-hardness for a much broader parameter regime; see~\Cref{thm:hardness-degpart}.

\subsection{A Quantitative Degenerate Partitioning Theorem}\label{sec:quant-degenerate-partitioning}

We start with our quantitative result.
In \cite{corsini2023partitioning}, it has been shown that a greedy algorithm similar to the one for coloring leads to a valid partition for certain degeneracy values. As for our quantitative results for coloring, our partitioning procedures rely on the observation that in the greedy algorithm, for many vertices several colors are available. Note that unlike in our quantitative results for coloring, in this section we count partitions, see also \cref{rem:unique-cols}.

\begin{algorithm}
    \caption{Greedy degenerate
 partitioning}\label{alg:greedydegenerate}
    \SetKwInOut{input}{Input}\SetKwInOut{output}{Output}
    \SetKw{Break}{break}

    \input{A graph $G$ with maximum degree $\Delta$, a $(\Delta-1)$-degeneracy ordering of the vertices $v_1, \dots, v_n$, and integers $p_1, \dots, p_s$ with $\sum_{i=1}^s p_i=\Delta-s$.}
    \output{A partition of $V(G)$ into sets $(V_1, \dots, V_s)$ such that each $G[V_i]$ is $p_i$-degenerate.}

    \For{$i \leftarrow 1$ \KwTo $n$}{
        \For{$j \leftarrow 1$ \KwTo $s$}{
            \If{$v_i$ \text{has at most} $p_j$ {left neighbors in} $V_j$}{
                Add $v_i$ to $V_j$\;
                \Break\;
            }
        }
    }
    \Return ($V_1, \dots V_s$)\;
\end{algorithm}

\begin{lemma}[\cite{corsini2023partitioning}]\label{lemma:partition-algo}
    Let $G$ be a graph with maximum degree $\Delta$. Given a vector of $s \geq 1$ integers $p=(p_1, \dots, p_s)$ with $p_i \geq 0$ for $i \in [s]$ and $\sum_{i=1}^s p_i = \Delta - s$ and a $(\Delta-1)$-degeneracy
    ordering $v_1, \dots, v_n$ of $V(G)$, \cref{alg:greedydegenerate} partitions $V(G)$ into sets $\{V_1, \dots, V_s\}$ so that $G[V_i]$ is $p_i$-degenerate.
\end{lemma}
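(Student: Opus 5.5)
We need to prove Lemma~\ref{lemma:partition-algo}: the greedy algorithm always places each vertex, and each $G[V_j]$ is $p_j$-degenerate.

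The plan has two parts: first show the algorithm never fails to place a vertex, then verify degeneracy of each part using the given ordering restricted to $V_j$.

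\textbf{Every vertex is placed.} Fix $v_i$ and suppose, for contradiction, that for every $j \in [s]$ the vertex $v_i$ has at least $p_j + 1$ left neighbors already in $V_j$. The sets $V_1, \dots, V_s$ are disjoint, so summing over $j$ shows that $v_i$ has at least
\[
\sum_{j=1}^s (p_j + 1) = (\maxdeg - s) + s = \maxdeg
\]
left neighbors. This contradicts the fact that $v_1, \dots, v_n$ is a $(\maxdeg-1)$-degeneracy ordering, which gives $\deg_G^\ell(v_i) \leq \maxdeg - 1$. Hence some index $j$ passes the test, and the inner loop places $v_i$ into the first such $V_j$. Every vertex ends up in exactly one part, so the output is a partition.

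\textbf{Each part is $p_j$-degenerate.} Restrict the ordering $v_1, \dots, v_n$ to $V_j$. When $v_i$ is added to $V_j$, it has at most $p_j$ left neighbors in $V_j$ at that moment. Vertices added to $V_j$ later lie to the right of $v_i$, so they never become left neighbors of $v_i$. The left neighbors of $v_i$ within $G[V_j]$ in the restricted ordering are therefore exactly the neighbors counted at insertion time. Thus the restricted ordering is a $p_j$-degeneracy ordering of $G[V_j]$.

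No step is a real obstacle. The only point needing care is the counting in the first part, namely that the tests for different $j$ count left neighbors in disjoint sets, so their counts add. The hypothesis $\sum_i p_i = \maxdeg - s$ is used only as an upper bound; the same argument works whenever $\sum_i p_i \geq \maxdeg - s$.
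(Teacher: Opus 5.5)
Your proof is correct. Pigeonhole guarantees every vertex is placed: if each test failed, $v_i$ would have $\sum_j (p_j+1) = \Delta > \deg_G^\ell(v_i)$ left neighbours. Restricting the given ordering to each $V_j$ then gives a $p_j$-degeneracy ordering, because vertices inserted later never become left neighbours.

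The paper does not prove this lemma itself; it quotes it from \cite{corsini2023partitioning}. Your argument is the standard one, and it uses the same counting the paper applies inline, for example when placing the rightmost vertex $z$ in the proof of \cref{thm-degpart}.

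One wording point: "used only as an upper bound" is ambiguous. The hypothesis is really used as the inequality $\sum_i p_i \geq \Delta - s$, which your final sentence already states correctly.
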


\noindent
Similar to Lemma~\ref{lem-kdeg2}, we will now show that in the greedy algorithm for degenerate partitioning, for many vertices there are several partition sets that they can safely join. To quantify this choice and obtain a bound on the number of partitions in a degenerate graph, we need the following definition.
For $p=(p_1, \dots, p_s)$, with $p_1 \geq \dots \geq p_s$, and a graph $G$ with maximum degree $\maxdeg$, define $\setsCanJoin{q}$, for some integer $q < \maxdeg$, as the smallest integer such that
$\sum_{i=1}^{\setsCanJoin{q}} (p_i+1) \geq \frac{\maxdeg-q}{2}$.

\begin{lemma}\label{lemma:irregular-degenerate} 
Let $G$ be a $d$-degenerate graph of maximum degree $\Delta$ with $d<\Delta$. Let $p=(p_1, \dots, p_s)$ be a vector of $s \geq 1$ integers with $p_i \geq 0$ for $i \in [s]$.
Let $q < \maxdeg$.
 If $\sum_{i=1}^{s} p_i = \Delta - s$, then $G$ admits at least $\setsCanJoin{q}^{\boundvlowshortb-s}$ $p$-partitions.
\end{lemma}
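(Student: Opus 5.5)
The plan mirrors the proof of~\Cref{lem-kdeg2}, using the greedy partitioning procedure of~\Cref{alg:greedydegenerate} with branching. Fix a $d$-degeneracy ordering $\sigma = v_1,\dots,v_n$ of $G$. Since $d<\maxdeg$, this is also a $(\maxdeg-1)$-degeneracy ordering. Sort $p$ so that $p_1\ge\dots\ge p_s$, and define $\vlow=\{v:\deg_G^\ell(v)<\frac{\maxdeg+q}{2}\}$ and $\vhigh$ as its complement. By~\Cref{lem-highlow2} with $k=\maxdeg$, we have $|\vlow|\ge \frac{q}{\maxdeg+q}|V(G)|$.

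We process the vertices left to right, maintaining sets $V_1,\dots,V_s$ such that each vertex placed in $V_j$ has at most $p_j$ left neighbors in $V_j$. Any such final assignment is a $p$-partition, since $\sigma$ restricted to $V_j$ is then a $p_j$-degeneracy ordering of $G[V_j]$. We call $V_j$ \emph{admissible} for $v$ if $v$ has at most $p_j$ left neighbors already in $V_j$. Some set is always admissible for every vertex by pigeonhole. Indeed, if none were, then $\deg^\ell(v)\ge\sum_j(p_j+1)=\maxdeg$, contradicting $\deg^\ell(v)\le d<\maxdeg$. This is exactly the content of~\Cref{lemma:partition-algo}, so every vertex in $\vhigh$ gets at least one choice.

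The key step is to show that every $v\in\vlow$ has at least $c:=\setsCanJoin{q}$ admissible sets. Suppose fewer than $c$ sets are admissible. Then at least $s-c+1$ sets $V_j$ are inadmissible, each containing at least $p_j+1$ left neighbors of $v$. The inadmissible sets contribute at least the sum of the $s-c+1$ smallest values of $p_j+1$, that is, $\sum_{j=c}^{s}(p_j+1)=\maxdeg-\sum_{j<c}(p_j+1)$. By minimality of $c$ we have $\sum_{j<c}(p_j+1)<\frac{\maxdeg-q}{2}$, so $\deg^\ell(v)>\maxdeg-\frac{\maxdeg-q}{2}=\frac{\maxdeg+q}{2}$. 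This contradicts $v\in\vlow$. I expect this counting, specifically checking that the worst case uses the smallest $p_j$ and getting the inequality direction right, to be the main point needing care.

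Finally, branching on these choices yields at least $c^{|\vlow|}$ distinct assignments. Distinct branches give distinct labelled tuples $(V_1,\dots,V_s)$, since they first differ at some vertex. We must count partitions rather than labelled tuples, as the paper notes in~\cref{rem:unique-cols}. To handle this, I would lose a factor corresponding to at most $s$ early vertices, for instance by not branching on the first $s$ vertices of $\vlow$ and fixing their choices canonically. This gives at least $c^{|\vlow|-s}\ge c^{\frac{q}{\maxdeg+q}|V(G)|-s}$ partitions. The bookkeeping for this label-symmetry correction is the second place I would check carefully. If needed, I would simply argue that each unordered partition arises from at most $s!\le c^{s}$ labelled tuples when $c\ge s$, and otherwise use the crude discard-$s$-branchings argument.
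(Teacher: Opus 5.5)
Up to the last step your argument is the paper's: a $d$-degeneracy ordering, \Cref{lem-highlow2} with $k=\maxdeg$, greedy placement with branching, and the bound that each vertex of $\vlow$ has at least $c$ admissible sets. Your derivation of that bound is correct and cleaner than the paper's one-line justification: the inadmissible sets must absorb at least $\sum_{j=c}^{s}(p_j+1)=\maxdeg-\sum_{j<c}(p_j+1)>\frac{\maxdeg+q}{2}$ left neighbors.

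The gap is the relabelling correction, which you left open. Here the paper does something specific that your sketch does not reproduce.

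\emph{Why your first fix fails.} Not branching on the first $s$ vertices of $\vlow$ and fixing their choices ``canonically'' does not make distinct branches yield distinct set partitions. Take $p=0^s$ with $s\ge 3$ and the rule ``lowest-index admissible set''. If the first $s$ vertices of $\sigma$ are pairwise non-adjacent, they all land in $V_1$. From then on the labels $2,\dots,s$ are interchangeable, so every resulting set partition with nonempty parts is produced by $(s-1)!$ different branches. That loss comes on top of the $c^{s}$ you already gave up.

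\emph{Why your fallback fails.} You stated $s!\le c^{s}$ for $c\ge s$. But $\sum_{j=1}^{s}(p_j+1)=\maxdeg\ge\frac{\maxdeg-q}{2}$, so $c\le s$ always, and the fallback covers only $c=s$. Moreover, $s!\le c^s$ genuinely fails when $c$ is small relative to $s$, e.g.\ $c=2$, $s\ge 4$.

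\emph{The missing idea} is the paper's anchoring. Assign the first $s$ vertices $v_1,\dots,v_s$ of $\sigma$ (not of $\vlow$) to $V_1,\dots,V_s$ respectively. This is always admissible: when $v_i$ is processed, its left neighbors lie in $V_1,\dots,V_{i-1}$ and $V_i$ is still empty. Afterwards every part $V_i$ contains its anchor $v_i$. If two outputs satisfy $V'_i=V_{\pi(i)}$ for all $i$, then $v_i\in V_i\cap V_{\pi(i)}$ forces $\pi=\mathrm{id}$. So distinct branches give distinct partitions, and at most $s$ vertices of $\vlow$ lose their branching, which yields $c^{|\vlow|-s}$.

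If $p$-partitions were counted as labelled tuples, no correction would be needed and your $c^{|\vlow|}$ would already suffice.
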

\begin{proof}
    Let $G$ be a $d$-degenerate graph for some $d < \Delta$. Fix a $d$-degeneracy ordering $\sigma$ of the vertices. We partition $V(G)$ by the left degree with respect to $\sigma$. Let \defvhighb, and let \defvlowb. Applying~\Cref{lem-highlow2} with parameter $k = \maxdeg$, we have \boundvlowb. 
    
    We get a lower bound on the number of $p$-partitions into sets $\{V_1, \dots, V_s\}$ by processing the vertices in the order of $\sigma$ from left to right. First, we assign $v_i$ to $V_i$ for $i \in [s]$. Note that this ensures the obtained partitions are unique. Now, for each following $v_j$ with $j \in \{(s+1), \dots, n\}$, 
    we observe that if $v_j$ has at most $p_i$ neighbors in $V_i$, then $v_j$ can be assigned to $V_i$. If $v_j \in \vlow$, then $\deg_G^\ell(v_j) \leq \frac{\maxdeg+q}{2}$. Hence, then the number of partite sets, $v_j$ can join is at least $\setsCanJoin{q}$, since $\sum_{i=1}^{\setsCanJoin{q}} (p_i +1) \geq \frac{\maxdeg-q}{2} = \maxdeg - \frac{\maxdeg+q}{2}$. To see this, note that if $v_j$ has $p_i+1$ neighbors in each of the $s-\setsCanJoin{q}$ smallest sets, the number of sets it can be assigned to is minimal.
    Therefore, there are at least $\setsCanJoin{q}^{|\vlow|-s} \geq \setsCanJoin{q}^{\boundvlowshortb-s}$ $p$-partitions.
\end{proof}

\begin{remark}
    If we set $q = 1$, then as in \cref{lem-kdeg2} we obtain a bound of $\setsCanJoin{q}^{\boundvlowshort-s}$. Note further that this bound is only exponential if $\setsCanJoin{q} \geq 2$, that is, only if $p_1 < \frac{\maxdeg-q}{2}$.
\end{remark}

\begin{figure}
     \begin{tikzpicture}
	\begin{scope}
		\node[vertex](c) at (0,0){};
	\node[rounded rectangle, minimum width=3.2cm, minimum height = 1.5cm, draw = black](g) at (1.5,0){$G_2$};

	\begin{scope}[shift = {(-1,0)}]
		\foreach \i in {1,..., 5}{
			\node[vertex](u\i) at (60*\i:1){};
		}
		\foreach \i in {1,...,4}{
			\foreach \j in {\i, ...,4}{
				\draw[edge](u\i)--(u\j);
			}
		}
		\foreach \j in {2,...,4}{
			\draw[edge](u5) -- (u\j);
		}

		\node[](t) at (-1,1){$G_1$};
	\end{scope}
	\draw[edge](u1) -- (c) -- (u5);
	
	\end{scope}

	\begin{scope}[shift = {(6,0)}]
		\node[rectangle, minimum width = 1cm, minimum height = 0.7cm, rounded corners = 5pt, draw=black](x) at (0.2,0.3){$X$};
		\node[vertex, label = right:$z$](z) at (0.2,-0.7){};
		\node[vertex](u) at (-0.5,-0.7){};
		\draw[edge](u) -- (z);
		%\draw[edge](z) -- (x.south west);
		\draw[edge](z) -- (-0.1,-0.05);
		\draw[edge](z) -- (0.5,-0.05);
		\draw[edge](z) -- (x.south);
		
	\node[rounded rectangle, minimum width=3.2cm, minimum height = 2cm, draw = black](g) at (0,0){};
	\node[](t) at (-1,0){$G_1$};
	\node[rounded rectangle, minimum width=3.2cm, minimum height = 1.5cm, draw = black](g) at (3,0){$G_2$};
		\node[vertex](c) at (1.5,0){};
	\end{scope}
\end{tikzpicture}
    \caption{\label{fig:degen-part} The two cases considered in the proof of Theorem~\ref{thm-degpart}: Case 1 (left), where $G_1$ is a quasi-clique and Case~2 (right), where $G_1$ contains a special neighborhood.}
\end{figure}

\noindent
It remains to consider the case where $G$ is $\maxdeg$-regular.
We introduce two further concepts which have been used in \cite{corsini2023partitioning} before. We say that a graph~$G$ is a \emph{quasi-clique}, denoted $\quasi$, if $G$ is the complete graph on $\Delta+1$ vertices with exactly one subdivided edge. In a graph $G$ with $z \in V(G)$ and $X \subseteq V(G)$, we say that the pair $(z, X)$ is a \emph{special neighborhood} if $X \subseteq N_G(z)$, $|X| = \Delta - 1$, $G[X]$ is not a complete graph, and $G - X$ is connected. In~\cite{corsini2023partitioning}, it has been shown that special neighborhoods exist in a useful class of graphs.

\begin{lemma}[\cite{corsini2023partitioning}] \label{lemma:special-neighborhood}
    Let $G$ be a 2-connected graph distinct from $\quasi$ and $K_{\Delta+1}$ and $v \in V(G)$. Suppose all vertices in $G$ have degree $\Delta$ except possibly $v$. Then there exists a special neighborhood $(z, X)$ with $v \notin X$.
\end{lemma}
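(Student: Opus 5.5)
The plan is to look for the special neighborhood in the form $X = N_G(z)\setminus\{w\}$, where $z \neq v$ has degree exactly $\maxdeg$ and $w$ is a neighbor of $z$. With this choice $|X| = \maxdeg - 1$ and $X \subseteq N_G(z)$ hold automatically. Three things then remain to check: $v \notin X$, $G[X]$ is not complete, and $G - X$ is connected.

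\textbf{Reduction of the connectivity condition.} Since $z$ is adjacent in $G - X$ only to $w$, the graph $G - X$ is connected if and only if $G - N_G[z]$ together with $w$ is connected. Equivalently, $w$ must have a neighbor in every component of $G - N_G[z]$, or $G - N_G[z]$ must be empty. The condition $v\notin X$ is arranged by choosing $w = v$ whenever $v \in N_G(z)$. So the plan is a case analysis on the structure of $G - N_G[z]$, choosing $z$ carefully. Throughout I assume $\maxdeg \geq 3$, as in the surrounding section.

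\textbf{Case $|V(G)| = \maxdeg + 1$.} Here $G - N_G[z]$ is empty for every degree-$\maxdeg$ vertex $z$, so only non-completeness of $G[X]$ matters. Since $G \neq K_{\maxdeg+1}$, there is a non-edge $ab$, and $a, b$ must avoid $\{z, w\}$. Every vertex other than $v$ has degree $\maxdeg$, i.e.\ is adjacent to all others, so $v \in \{a,b\}$. But then $v \in X$ is forced, so this case cannot occur unless $v$ has degree less than $\maxdeg$. Then there are two subcases:
\begin{itemize}
\item If $v$ is missing at least two edges, take $z \neq v$ and $w = v$. Then $X$ still contains a non-adjacent pair, namely two non-neighbours of $v$ that are adjacent to each other are not enough, so I will need to redo the degree count here.
\item Otherwise $v$ is missing exactly one edge, and the degree-sum parity contradiction, or the quasi-clique exclusion, finishes the case.
\end{itemize}
I expect this small case to be settled by direct counting.

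\textbf{Case $|V(G)| > \maxdeg + 1$.} First I find $z \neq v$ of degree $\maxdeg$ whose neighborhood is not a clique, together with a non-neighbor at distance~$2$. If every degree-$\maxdeg$ vertex has a clique neighborhood, then every block around such vertices is a clique. Together with 2-connectivity this forces $G$ to be $K_{\maxdeg+1}$, or, using the exceptional vertex $v$, the quasi-clique $\quasi$, which are excluded. Next I mimic the Lovász argument behind \Cref{lemma:lovasz-2-connected-lemma}. Among all admissible $z$, take one minimizing the number of components of $G - N_G[z]$, or look at the block-cut-tree of $G - N_G[z]$ and of $G - z$. I then want to show that a single neighbor $w$ of $z$ sees every component of $G - N_G[z]$, while $N_G(z)\setminus\{w\}$ remains non-complete. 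Note that $N_G(z)$ has size $\maxdeg \geq 3$ and is not a clique. Removing one vertex $w$ destroys non-completeness only if every non-edge of $G[N_G(z)]$ contains $w$. In that situation I swap roles: I replace $z$ by a suitable neighbor, or choose $w$ differently among the $\maxdeg$ candidates.

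\textbf{Main obstacle.} The hardest step is guaranteeing that one neighbor $w$ meets all components of $G - N_G[z]$. 2-connectivity only gives that each component attaches to at least two vertices of $N_G(z)$, not to a common one. My first attempt is an extremal choice: pick $z$ and a component $D$ of $G - N_G[z]$ so that $D$ is as large as possible. Then argue that if some other component $D'$ were not seen by $w$, a vertex of $D'$ or of $N_G(z)$ would give a better choice of $z$, contradicting maximality. The regularity of degrees (all equal $\maxdeg$ except $v$) is what I expect to make this exchange argument go through, by comparing $|N_G(z)\cap N_G(D)|$ counts.
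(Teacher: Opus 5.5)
\textbf{There is a genuine gap.} Your starting reduction is fine: for $z\neq v$ every candidate is $X=N_G(z)\setminus\{w\}$, and $G-X$ is connected exactly when $w$ has a neighbor in every component of $G-N_G[z]$. But the step you call the main obstacle is the entire content of the lemma, and you only announce an extremal/exchange argument that you expect to work. (The paper does not prove the lemma either; it imports it from \cite{corsini2023partitioning}.)

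Your sketch also puts the exclusion of $K^*_{\Delta+1}$ in the wrong place. If some $z\neq v$ had a clique neighborhood, then every vertex of $N_G(z)$ would have all its neighbors in $N_G[z]$ (this includes $v$, since $\deg(v)\le\Delta$). So $G=K_{\Delta+1}$, and the quasi-clique never arises at that point. Indeed, take $K^*_{\Delta+1}$ on $a_1,\dots,a_{\Delta+1}$ with $v$ subdividing $a_1a_2$. There every $z\neq v$ has a non-clique neighborhood, and the lemma fails exactly at your unproved step:
\begin{itemize}
\item for $z=a_1$, the rule $w=v$ leaves $X=\{a_3,\dots,a_{\Delta+1}\}$, a clique;
\item for $z=a_j$ with $j\ge 3$, non-completeness forces $a_1,a_2\in X$, which isolates $v$ in $G-X$.
\end{itemize}
So any correct version of your main step must invoke $G\neq K^*_{\Delta+1}$, and nothing in your sketch says where. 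Separately, the case $|V(G)|=\Delta+1$ is vacuous: each $u\neq v$ is adjacent to all other vertices, hence so is $v$, and $G=K_{\Delta+1}$. The subcases you list there cannot occur.

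Your extremal idea is viable for connectivity, but it has to be carried out and then combined with the other two requirements. Choose $z\neq v$ maximizing the order of a largest component $D$ of $G-N_G[z]$. If $D'$ is another component and $z'\in D'\setminus\{v\}$, then $N_G(D)\subseteq N_G(z')$. Otherwise some $a\in N_G(D)\setminus N_G(z')$ makes $D\cup\{a,z\}$ connected in $G-N_G[z']$, contradicting maximality. Hence every $w\in N_G(D)$ meets every component, except possibly a component equal to $\{v\}$, which needs separate treatment. Two things are still missing.
\begin{enumerate}
\item \emph{Non-completeness.} $N_G(z)\setminus\{w\}$ is a clique for every $w\in N_G(D)$ only if $N_G(D)=\{w_1,w_2\}$ and $w_1w_2$ is the unique non-edge of $G[N_G(z)]$. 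Degree counting then shows that $G-N_G[z]=D$ hangs off single edges $w_1a_1$ and $w_2a_2$. If $a_1=a_2$, 2-connectivity gives $D=\{a_1\}$ of degree $2<\Delta$, so $a_1=v$ and $G=K^*_{\Delta+1}$; this is one place where the exclusion is genuinely used. If $a_1\neq a_2$, you must switch to a special neighborhood centered at $w_1$ or $w_2$.
\item \emph{The requirement $v\notin X$.} This forces $w=v$ whenever $v\in N_G(z)$, and that has to be reconciled with $w\in N_G(D)$.
\end{enumerate}
Until these pieces are supplied, the proposal is an outline rather than a proof.
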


\noindent
We now have all of the tools we need to prove the main result of this section.
\begin{theorem}\label{thm-degpart}
Let $G$ be a graph of maximum degree $\Delta$ which is not complete. Let $p=(p_1, \dots, p_s)$ be a vector of $s \geq 1$ integers with $p_1 \geq \dots \geq p_s \geq 0$, and let $q < \maxdeg$.
If $\sum_{i=1}^{s} p_i = \Delta - s$, then $G$ has at least $\setsCanJoin{q}^{\boundvlowshortb-p_s-s-3}$ $p$-partitions.
\end{theorem}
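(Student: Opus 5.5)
We follow the template of the proof of \Cref{thm:quantbrooks}. If $G$ is $d$-degenerate for some $d<\maxdeg$, \Cref{lemma:irregular-degenerate} already gives $\setsCanJoin{q}^{\boundvlowshortb-s}$ $p$-partitions, which is at least the claimed bound. So we may assume $G$ is $\maxdeg$-regular. We may also assume $G$ is connected: otherwise we treat each component separately and multiply the counts. Note that the exponents then add up, with an additive loss per component; we would apply the argument to one non-complete component and bound the others trivially, or argue directly with a single global ordering.

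The regular case splits according to the structure used in \Cref{fig:degen-part}. First, if $G$ has a cut-vertex $c$, choose a component $G_1'$ of $G-c$ whose closure $G_1=G[V(G_1')\cup\{c\}]$ is a leaf-like piece, and let $G_2$ be the remainder together with $c$. By \Cref{obs-reg-vertremoval-deg}, $G_2$ is $(\maxdeg-1)$-degenerate with $c$ placed rightmost, so \Cref{lemma:irregular-degenerate} counts its partitions.

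Within $G_1$ there are two subcases.
\begin{itemize}
\item \emph{Case 1: $G_1-c$ together with $c$'s neighbors forms a quasi-clique $\quasi$} (the picture on the left). Every vertex of $G_1$ other than $c$ then has degree $\maxdeg$, and the structure is explicit. We place $G_2$ first in a degeneracy order ending at $c$, and then handle the $\maxdeg+1$ or so remaining vertices by hand. Since $\sum_i(p_i+1)=\maxdeg$, a clique $K_{\maxdeg}$ can be partitioned with each $V_i$ receiving $p_i+1$ vertices, and the subdivision vertex gives the needed slack. This extension exists in at least one way, and we lose only $O(\maxdeg)$ vertices from the exponent.
\item \emph{Case 2: otherwise} (the picture on the right). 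Apply \Cref{lemma:special-neighborhood} to the 2-connected piece $G_1$, where only $c$ may have degree less than $\maxdeg$. This yields a special neighborhood $(z,X)$ with $c\notin X$.
\end{itemize}
If $G$ is 2-connected, we apply \Cref{lemma:special-neighborhood} to $G$ directly, since $G$ is $\maxdeg$-regular, not complete, and not $\quasi$ (the latter is not regular).

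Given a special neighborhood $(z,X)$, we use \Cref{lemma:non-cutset-ordering} with this $X$ (note $|X|=\maxdeg-1\le\maxdeg$) and rightmost vertex $z$. This produces an ordering in which $X$ comes first, $z$ comes last, and every other vertex has left degree less than $\maxdeg$. We then run the counting argument of \Cref{lemma:irregular-degenerate}, with one modification for $X$. Because $G[X]$ is not complete, we pick two non-adjacent $x_1,x_2\in X$ and assign the vertices of $X$ deliberately: $x_1$ and $x_2$ both go into the set $V_s$, and the rest of $X$ fills the sets so that $V_i$ receives $p_i+1$ vertices for $i<s$ and $V_s$ receives $p_s+1$ vertices including $x_1,x_2$. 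These sizes are consistent since $\sum_i(p_i+1)=\maxdeg$ and $|X|=\maxdeg-1$ accounts for the doubled use in $V_s$.

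This assignment must keep every $G[V_i]$ $p_i$-degenerate inside $X$. It must also leave $z$, which has all of $X$ plus one more vertex as left neighbors, with some set $V_i$ in which it has at most $p_i$ neighbors. Arranging $V_s$ so that $z$ sees at most $p_s$ of its members (using the non-edge $x_1x_2$ inside $V_s$) is the delicate step. We expect this, rather than the counting, to be the main obstacle, and it is likely why $p_s$ appears in the exponent: we may need to sacrifice the free choices of about $p_s+3$ vertices near $X$ and $z$.

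Outside $X\cup\{z\}$, each vertex of $\vlow$ (given by \Cref{lem-highlow2} with $k=\maxdeg$) has at least $\setsCanJoin{q}$ admissible sets, exactly as in \Cref{lemma:irregular-degenerate}. We also lose $s$ vertices to the uniqueness normalization that places one vertex in each $V_i$ first. Collecting the losses gives at most $|X\cap\vlow|$-type and $O(p_s+s)$ terms. The bookkeeping that turns these into exactly $p_s+s+3$, together with checking that the cut-vertex gluing costs only a constant (the color of $c$ is fixed by $G_2$, and $G_1$ is ordered after it), is where we would spend care.
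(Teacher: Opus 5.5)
\textbf{There is a genuine gap: the step that places $z$ is set up backwards.} Your skeleton matches the paper's. The non-regular case goes through \Cref{lemma:irregular-degenerate}. In the regular case, a leaf block $G_1$ is either $\quasi$ or yields a special neighborhood $(z,X)$, with $X$ leftmost and $z$ rightmost via \Cref{lemma:non-cutset-ordering}. But the step you call delicate cannot work as written.

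Your sizes do not fit. Giving $p_i+1$ vertices to each $V_i$ with $i<s$ and $p_s+1$ to $V_s$ uses $\sum_i(p_i+1)=\Delta$ vertices, whereas $|X|=\Delta-1$.

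More fundamentally, you try to let $z$ join $V_s$. But $x_1,x_2\in V_s\cap N(z)$, so this is impossible whenever $p_s\le 1$ (e.g.\ ordinary coloring). Moreover, giving every $V_i$ with $i<s$ its $p_i+1$ neighbors of $z$ closes all those classes to $z$. $V_s$ is closed as well once it holds $p_s+1$ of them, or, if you shrink it to make the sizes fit, whenever greedy puts $z$'s last neighbor there.

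The missing idea is to \emph{overload} $V_s$: put $x_1,x_2$ and $p_s$ further vertices of $X$ into $V_s$.
\begin{itemize}
\item This set is still $p_s$-degenerate: order it with $x_1$ last, using $x_1x_2\notin E(G)$.
\item Yet it contains $p_s+2$ neighbors of $z$. Since $z$ has exactly $\Delta=\sum_i(p_i+1)$ neighbors, the rest cannot give every $V_i$ with $i<s$ its $p_i+1$ of them. So $z$ fits into some such $V_i$ by pigeonhole. This needs $s\ge 2$, which the statement tacitly requires anyway: for $s=1$ a $\Delta$-regular graph has no $p$-partition.
\item Only these $p_s+2$ vertices and the $s-1$ normalizing ones are pinned. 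The rest of $X$ is handled greedily with full choice, and this is exactly where the loss $p_s+s+1$ comes from.
\end{itemize}
Pinning all of $X$, as you propose, loses $|X\cap\vlow|$ instead, which is at least about $\Delta/2$ and can far exceed $p_s+s+1$.

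Also, do not glue in this case. Since $c\notin X$ and $G_1-X$ is connected, $G-X$ is connected, so the paper orders all of $G$ at once. If $G_2$ is partitioned first, $c$'s class can clash with the vertices you force into $V_s$.

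\textbf{The quasi-clique case loses too much in the exponent.} Producing a single extension keeps only the exponent of $G_2$, namely $\frac{q}{\Delta+q}(|V(G)|-\Delta-1)-s$. This falls short of the claim by $\frac{q(\Delta+1)}{\Delta+q}-p_s-3$. For $p=(0,\dots,0)$ and $q=\Delta-3$, where $\setsCanJoin{q}=2$, the shortfall is roughly $\Delta/2-3$, so no bookkeeping recovers the stated bound.

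The paper also counts inside $G_1$. Let $u$ be the cut vertex (necessarily the subdivision vertex) and $x,y$ its two non-adjacent neighbors in $G_1$.
\begin{itemize}
\item Order $u,x,y$ first via \Cref{lemma:non-cutset-ordering}.
\item Keep $u$'s class from $G_2$, put $x$ into another class, and put $y$ together with $x$.
\item Apply \Cref{lem-highlow2} to $G_1$; only these three vertices are lost from the exponent.
\item Place the rightmost vertex $v$ as follows. If the class $V_i\ni x,y$ has $p_i=0$, pigeonhole gives another class. Otherwise put $v$ into $V_i$ itself: the greedy admissibility conditions leave at most $p_i+1$ vertices of $G_1-u$ in $V_i$ before $v$. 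The non-edge $xy$ keeps these plus $v$ $p_i$-degenerate, and $u\notin V_i$ separates them from $G_2$.
\end{itemize}

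\textbf{Connectivity.} You cannot reduce to connected $G$ by multiplying over components. A disjoint union of copies of $K_{\Delta+1}$ has no $p$-partition at all, so connectivity must be assumed, as the paper tacitly does.
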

\begin{proof}
Suppose $G$ is $d$-degenerate for some $d < \maxdeg$. Then, we apply \cref{lemma:irregular-degenerate} and obtain $\setsCanJoin{q}^{\boundvlowshortb-s}$ $p$-partitions. Hence, we may assume that $G$ is $\maxdeg$-regular.
Let $G_1$ be a leaf block in the block-cut-tree of $G$. Note that $G_1$ = $G$ if $G$ is 2-connected. We distinguish two cases.
    
    \case{1} $G_1$ is a quasi-clique.\\
    Since $G_1$ is not $\maxdeg$-regular, $G_1 \neq G$. Let $u \in V(G_1)$ be the single cut vertex of $G_1$. 
    Let $G_2 = G - (V(G_1) \setminus \{u\})$.
    Since $G_1$ is a quasi-clique and $u$ has a neighbor in $G_2$, $\deg_{G_1}(u)=2$, and since $G$ is $\maxdeg$-regular, for all $v \in V(G_1) \setminus \{u\}$, we get $\deg_{G_1}(v)= \Delta$. 
    By \cref{obs-reg-vertremoval-deg}, $G_2$ is $(\Delta-1)$-degenerate, so by \cref{lemma:irregular-degenerate}, there are $\setsCanJoin{q}^{\boundvlowshortG{G_2}-s}$ $p$-partitions of $G_2$.
    Fix some $p$-partition $\partitionNotation{G_2}=\{V_1, \dots, V_s\}$ of $G_2$. We will get a lower bound on the number of partitions in $G_1$ compatible with $\partitionNotation{G_2}$.
    
    We construct an ordering of the vertices in $G_1$. Let $x,y$ be the neighbors of $u$ in $G_1$ and let $X = \{u, x,y\}$. We apply \cref{lemma:non-cutset-ordering} to $G_1$ and $X$ and obtain an ordering $\sigma$ where the vertices in $X$ are the leftmost vertices. Let $v$ be the rightmost vertex according to $\sigma$. We order the vertices in $X$ $u,x,y$. Note that this does not influence the properties of the ordering obtained by \cref{lemma:non-cutset-ordering}.
    We apply \cref{lem-highlow2} to $G_1$ with ordering $\sigma$ and obtain that \boundvlowG{G_1}, for \defvlowG{G_1}.
    
    To count the number of partitions of $G_1$, we use an argument similar to \cref{lemma:irregular-degenerate}. We process the vertices in the order given by $\sigma$. Since we want to extend a partition of $G_2$ and $u \in V(G_1) \cap V(G_2)$, we assign $u$ to the same set as for $G_2$. Next, we assign $x$ to a different set than the one containing $u$; any set will do, since $u$ is the only neighbor of $x$ which has already been assigned. Further, we assign $y$ to the same set as $x$, which is once again possible because $u$ is the only neighbor of $y$ which has already been assigned.
    
    For each remaining vertex $w \neq v$, there is guaranteed to be at least one available set $V_i$ with at most $p_i$ left neighbors of $w$, since each such $w$ has left degree less than $\maxdeg$ by Property (3) of~\Cref{lem-highlow2}.

    It remains to show that we can safely add the righmost vertex $v$ to some set. Let $V_i$ be the set containing both $x$ and $y$. If $p_i = 0$, then there must be an available set for $v$ because $V_i$ contains $p_i + 2$ neighbors of $v$.
    Then we may assume that $p_i > 0$. In this case, we add $v$ to $V_i$. To see that $G[V_i]$ remains $p_i$-degenerate, observe that $|V_i \setminus \{v\}| \leq p_i + 1$, as otherwise we have contradicted our greedy coloring procedure (since $G_1$ is a quasi-clique). Then $|V_i| \leq p_i + 2$. A $p_i$-degeneracy ordering may be obtained by taking an arbitrary order with $x$ rightmost. Since $|V_i| \leq p_i + 2$, it is immediate that every vertex except $x$ has at most $p_i$ left neighbors. Meanwhile, $x$ has at most $p_i$ left neighbors because it is non-adjacent to $y$.

    Thus, for each partition of $G_2$, there are at least $\setsCanJoin{q}^{\boundvlowshortG{G_1}-3}$ partitions of $G_1$ that extend that of $G_2$.
    It follows that the total the number of $p$-partitions of $G$ is at least 
    \[\left( \setsCanJoin{q}^{\boundvlowshortG{G_2}-s} \right) \left( \setsCanJoin{q}^{\boundvlowshortG{G_1}-3} \right) = \setsCanJoin{q}^{\boundvlowshortG{G}-s-3}.\]

    \case{2} $G_1$ is not a quasi-clique.\\
    If $G \neq G_1$, then let $v \in G_1$ be the single cut vertex of $G$. Otherwise, let $v$ be any vertex of~$G_1$.
    By \cref{lemma:special-neighborhood}, since $G_1$ is neither complete nor a quasi-clique there exists $z \in V(G_1)$ and $X \subseteq G_1$ such that $(z, X)$ is a special neighborhood in $G_1$ with $v \notin X$. Since $v \notin X$, and $G_1-X$ is connected, $G-X$ is connected. Let $x, y \in X$ be two non-adjacent vertices that exist by definition of $X$. We apply \cref{lemma:non-cutset-ordering} to $G$, $X$ and $z$ and find an ordering~$\sigma$ with $z$ as the rightmost vertex. We let $x,y$ be the leftmost vertices of $X$ in $\sigma$.
     We partition the vertices of $G$ based on their left degree in $\sigma$. Let \defvhighG{G}, and let \defvlowG{G}. By Lemma~\ref{lem-highlow2}, \boundvlowG{G}.

    We get a lower bound on the number of $p$-partitions of $G$ by processing the vertices in the order of $\sigma = v_1, \dots, v_n$ from left to right.
    We assign $x,y$ as well as $v_3, \dots, v_{p_s+2}$ to $V_s$. Note that $V_s$ is $p_s$-degenerate, by the same argument as given in Case 1, since $x$ and $y$ are non-adjacent. Further, for $i \in \{1,\dots, s-1\}$, we assign $v_{p_s + i + 2}$ to $V_i$. For all following vertices, similar to Case~1, there are at least $\setsCanJoin{q}$ partite sets a vertex in $\vlow$ can be assigned to, and at least one set a vertex in $\vhigh$ can be assigned to.
    Finally, since $z$ has $p_s+2$ neighbors in $V_s$ and $\sum_{i=1}^s(p_i+1) = \maxdeg$, there is some set $V_i$ containing at most $p_i$ neighbors of $z$.
    So we can safely assign $z$ to $V_i$.

    Hence, the number of $p$-partitions of $G$ is at least 
    \[\setsCanJoin{q}^{|\vlow| - (p_s + s+1)} \geq \setsCanJoin{q}^{\boundvlowshortG{G}- p_s-s -1} .\]

    \noindent
    The result now follows by a pessimistic combination of the bounds obtained in the two cases.
\end{proof}

\begin{remark}
    If $p_i = d$ for $d \leq \frac{\maxdeg}{2}$, for all $i \in [s]$, then for $q=1$, we obtain that $\setsCanJoin{1} = \lceil \frac{s}{2}\rceil$. Hence, the number of partitions is at least $\lceil\frac{s}{2}\rceil^{\frac{|V(G)|}{\maxdeg+1} - d-s -3}$. Setting $d = 0$, that is, requiring the partition to be a coloring, we recover (up to the additive constant) our bound from \cref{thm:quantbrooks} in the modified version for unique colorings, see \cref{rem:unique-cols}.
\end{remark}

\subsection{Hardness for Degenerate Partitioning}\label{sec:hardness-degenerate}

We prove \cclass{NP}-completeness for \ppartition
for graphs of maximum degree~$\Delta$ bounded below by a function of $p$ and $s$. 
Our goal will be to improve upon the existing hardness result given by Abu-Khzam, Feghali, and Heggernes~\cite{abu2020partitioning}.

\begin{theorem}[\cite{abu2020partitioning}]
    \textsc{\ppartition}, for $p = (p_1, p_2) \neq (1, 1)$, is \cclass{NP}-complete for graphs of maximum degree $\maxdeg = p_1 + p_2 + 3 \geq 5$.
\end{theorem}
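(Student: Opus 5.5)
Membership in \cclass{NP} is immediate: a certificate is a partition $\{V_1,V_2\}$ together with a $p_1$-degeneracy ordering of $G[V_1]$ and a $p_2$-degeneracy ordering of $G[V_2]$, all checkable in polynomial time. For hardness I would reduce from a known \cclass{NP}-complete coloring problem of bounded degree, namely \kcolor with $k=3$ on graphs of maximum degree $4$. The idea is a local replacement: each vertex $v$ of the source instance $F$ becomes a vertex gadget, and each edge of $F$ becomes an edge gadget between the corresponding vertex gadgets.

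The central object is a \emph{forcing gadget}: a small graph with designated attachment vertices, such that in every $(p_1,p_2)$-partition the attachment vertices are forced into a restricted pattern of parts. The natural building block is a near-clique on $p_1+p_2+2$ vertices. A clique $K_{p_1+p_2+2}$ is not $(p_1,p_2)$-partitionable, since any part of size $a$ is an $(a-1)$-degenerate clique, so some part would need to exceed its bound. Deleting one edge $ab$ from this clique yields a graph that is partitionable, but only when $a$ and $b$ lie in a common part and that part is saturated; I would verify this by counting vertices per part. Chaining such gadgets propagates ``same part'' and ``different part'' constraints. Equality of parts is a binary relation, so it cannot on its own encode three colors. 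I would therefore encode a color by \emph{which} of several attachment vertices is placed in the part of smaller degeneracy, giving a one-hot encoding of the three colors, and add gadgets forbidding two adjacent vertex gadgets from selecting the same color.

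The degree bound $\maxdeg=p_1+p_2+3$ must be tracked throughout. Internal gadget vertices have degree $p_1+p_2+1$, so each vertex can afford at most two external edges. This is the main obstacle: the attachments must be arranged so that no vertex exceeds degree $p_1+p_2+3$ while the forcing remains rigid. The constraint $\maxdeg \ge 5$ and the exclusion of $(1,1)$ should enter exactly here. When $p=(1,1)$ the near-clique gadget has too little slack, in that partitions of a $K_4$ minus an edge into two forests are too flexible, so the forcing fails. When $p_1+p_2\ge 2$ otherwise, the asymmetry or extra size makes the forcing rigid.

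Finally, I would prove correctness in both directions. In the forward direction, a proper $3$-coloring of $F$ yields an explicit partition by filling each gadget according to its forced pattern, after checking that every part admits a $p_i$-degeneracy ordering, for instance by ordering the gadget vertices last. In the backward direction, the forcing lemma reads off a color for each vertex, and the edge gadgets guarantee that the resulting coloring of $F$ is proper.
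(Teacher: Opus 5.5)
There is a genuine gap, and it starts with the basic gadget. Your claim that $K_{p_1+p_2+2}$ is not $(p_1,p_2)$-partitionable is false. Since $K_m$ is $(m-1)$-degenerate, splitting $K_{p_1+p_2+2}$ into a $K_{p_1+1}$ and a $K_{p_2+1}$ is a valid partition. Likewise, $K_{p_1+p_2+2}-ab$ has partitions with $a$ and $b$ in different parts: put $a$ with $p_1$ other vertices in $V_1$, and $b$ with the remaining $p_2$ in $V_2$. So your near-clique forces nothing.

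Your count is off by one. The non-partitionable clique is $K_{p_1+p_2+3}=K_{\maxdeg}$, and the forcing you want holds for $K_{p_1+p_2+3}-ab$:
\begin{itemize}
\item a part missing $a$ or $b$ meets the gadget in a clique, hence in at most $p_i+1$ vertices;
\item a part containing both meets it in at most $p_i+2$ vertices;
\item the total of $p_1+p_2+3$ vertices then forces $a,b$ into one saturated part.
\end{itemize}
This is exactly the paper's vertex gadget from \Cref{thm:hardness-degpart} with $s=2$. The correction also changes your degree bookkeeping. Internal vertices now have degree $\maxdeg-1$ and admit only one external edge; only $a$ and $b$ admit two.

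More seriously, the proposal never actually gives the reduction. The one-hot color gadget and the edge gadget are the whole content of the proof, and you leave the degree budget as an unresolved ``obstacle''. That is precisely the hard part. Note that the paper does not prove this theorem at all; it cites \cite{abu2020partitioning}. The reason is that its own matching construction in \Cref{thm:hardness-degpart} turns same-part/different-part constraints into an $s$-coloring of the source graph, and for $s=2$ that is 2-colorability, which is polynomial.

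The degree budget makes this concrete. At $\maxdeg=p_1+p_2+3$, a double-edge link as in the paper uses one of the two spare edges at each of $a$ and $b$, so each gadget can be linked to at most two others. Any hardness must therefore exploit the single spare edge at internal vertices. Neither direction of correctness then reduces to a local count, because degeneracy is a global property of $G[V_i]$: an extra same-part neighbor is harmless if it can be peeled off first.

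Your encoding via ``the part of smaller degeneracy'' is also undefined when $p_1=p_2$, for instance $(2,2)$ with $\maxdeg=7$, which the statement covers. You would need an additional symmetry-breaking reference propagated through bounded-degree chains.

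Finally, your explanation of the $(1,1)$ exclusion does not hold up. With the corrected gadget, $K_5-ab$ does rigidly force $a,b$ into a common induced forest on three vertices. And $(2,2)$ is just as symmetric as $(1,1)$. So the proposal never locates where the hypothesis $(p_1,p_2)\neq(1,1)$ is actually used.
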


To achieve this, we
reduce from \kcolor, which is known to be \cclass{NP}-complete for graphs of bounded degree by a result from Molloy and Reed \cite{molloy2014almost}. We derive a degree bound from this result that we state in \cref{cor-coloringhardbound}. A short proof can be found in the appendix.

\begin{theorem}[\cite{molloy2014almost}]\label{thm:molloy-hardness}
    Let $\Delta \geq 3$ and let $k_\Delta$ be the maximum integer such that $\Delta \geq (k_\Delta+1)(k_\Delta+2)$. For every $3 \leq k \leq \Delta-k_\Delta-1$, \textsl{\kcolor} is \cclass{NP}-complete for graphs of maximum degree $\Delta$.
\end{theorem}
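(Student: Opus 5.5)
This is a cited result of Molloy and Reed, so the plan is to derive it from their main theorem on the complexity of $k$-colouring graphs of bounded maximum degree, not to reprove it from scratch. I recall their result as follows. For fixed $\Delta$ and $k$, deciding $k$-colorability of graphs of maximum degree $\Delta$ is polynomial whenever $k \geq \Delta - k_\Delta$, where $k_\Delta$ is the largest integer with $(k_\Delta+1)(k_\Delta+2) \leq \Delta$. It is \cclass{NP}-complete for every $3 \leq k \leq \Delta - k_\Delta - 1$. Given this, the statement is simply their hardness half, read with exactly the same threshold $k_\Delta$. The first step is therefore to check that the paper's definition of $k_\Delta$ coincides with theirs; it does, since $\Delta \geq (k_\Delta+1)(k_\Delta+2)$ is the same inequality.

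If I instead had to argue the hardness directly, I would use the classical two-step reduction, with the base case $k=3$ and $\Delta \geq 4$ known to be \cclass{NP}-complete from Garey--Johnson--Stockmeyer.

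\textbf{Step 1.} Lift $k$ while keeping degrees small. Given an instance $G$ of $(k-1)$-colorability, add a universal-like gadget: join each vertex to a new vertex, but spread the load so that no vertex has degree beyond $\Delta$. Concretely, replace the single apex by disjoint cliques glued through forcing gadgets, so that all apex copies must receive the same colour.

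\textbf{Step 2.} Pad degrees upward. Attach to low-degree vertices $K_k$-minus-an-edge style gadgets. These preserve $k$-colorability and raise the maximum degree to exactly $\Delta$.

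The main obstacle is Step 1 in the regime where $k$ is close to $\Delta$. There, gadgets forcing equal colours consume almost all of the degree budget, and the bound $\Delta - k_\Delta - 1$ is exactly where Molloy--Reed's finer construction is needed. It is built from graphs in which every vertex lies in a near-clique, and it exploits their structural dichotomy. Near the threshold I would not attempt an independent construction but rely on their theorem as a black box. That is what the paper does: it states the result with citation and only derives the corollary \cref{cor-coloringhardbound} from it.
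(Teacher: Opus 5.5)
Your proposal matches what the paper does. The paper gives no proof of this statement: it quotes it as a result of Molloy and Reed and uses it only as a black box to derive \cref{cor-coloringhardbound}, and your check that $k_\Delta$ is defined by the same inequality $(k_\Delta+1)(k_\Delta+2)\le\Delta$ is the only thing that needs verifying. The two-step gadget reduction you sketch is not needed and is too vague to stand on its own, particularly for $k$ close to $\Delta$, as you note yourself, so you are right to defer to the citation.
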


\begin{restatable}{corollary}{corcoloringhard}\label{cor-coloringhardbound}
    For every integer $k \geq 3$ and $\Delta \geq \left(\frac{1}{2} + \frac{\sqrt{4k+1}}{2}\right)^2$, \textsl{\kcolor} is \cclass{NP}-complete for graphs with maximum degree $\Delta$.
\end{restatable}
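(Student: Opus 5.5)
We derive \Cref{cor-coloringhardbound} directly from \Cref{thm:molloy-hardness}. Fix $k \geq 3$ and $\Delta \geq \left(\frac{1}{2} + \frac{\sqrt{4k+1}}{2}\right)^2$; note that this already forces $\Delta \geq 4$, so the hypothesis $\Delta \geq 3$ of \Cref{thm:molloy-hardness} holds. It suffices to check the upper constraint $k \leq \Delta - k_\Delta - 1$, i.e.\ $k_\Delta \leq \Delta - k - 1$, where $k_\Delta$ is the largest integer with $(k_\Delta+1)(k_\Delta+2) \leq \Delta$.

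The first step is an upper bound on $k_\Delta$ in terms of $\Delta$. From $(k_\Delta+1)^2 < (k_\Delta+1)(k_\Delta+2) \leq \Delta$ we get $k_\Delta + 1 < \sqrt{\Delta}$. More precisely, $k_\Delta+1$ is at most the positive root $t^*$ of $t^2 + t - \Delta = 0$, namely $t^* = \frac{-1 + \sqrt{4\Delta+1}}{2}$. The crude bound $k_\Delta + 1 \leq \sqrt{\Delta}$ should already suffice. The target inequality then reduces to $\sqrt{\Delta} \leq \Delta - k$.

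The second step checks this with $x = \sqrt{\Delta}$: we need $x^2 - x - k \geq 0$, which holds exactly when $x \geq \frac{1 + \sqrt{1+4k}}{2}$, the positive root of $x^2 - x - k$. This is precisely the assumed lower bound $\sqrt{\Delta} \geq \frac{1}{2} + \frac{\sqrt{4k+1}}{2}$. Chaining the two steps gives $k_\Delta + 1 \leq \sqrt{\Delta} \leq \Delta - k$, so $3 \leq k \leq \Delta - k_\Delta - 1$, and \Cref{thm:molloy-hardness} yields \cclass{NP}-completeness.

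There is no real obstacle; the only care needed is with integrality. We must make sure $k_\Delta$ is well defined; if it is negative, the bound holds trivially. Also, the inequalities in the first step go in the right direction because we use an upper bound on $k_\Delta$, not a lower one.
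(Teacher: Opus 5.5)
Your proof is correct and follows essentially the same route as the paper: bound $k_\Delta + 1 < \sqrt{\Delta}$ from the defining inequality, reduce $k \le \Delta - k_\Delta - 1$ to $\Delta - \sqrt{\Delta} \ge k$, and solve the resulting quadratic in $\sqrt{\Delta}$. The paper only solves the equation $\Delta_0 = k + \sqrt{\Delta_0}$ for the threshold. Your treatment of the inequality $x^2 - x - k \ge 0$ makes explicit that it holds for every $\Delta$ above that threshold, and you also check $\Delta \ge 3$, which the paper leaves implicit.
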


\noindent
 We denote $\maxdeg_k = \left\lceil \left(\frac{1}{2} + \frac{\sqrt{4k+1}}{2}\right)^2  \right\rceil$, so \kcolor is \cclass{NP}-complete for graphs of maximum degree~$\maxdeg_k$.
We are now set to prove our result.
The key element of the construction are the vertex gadgets which force adjacent gadgets to be put in different partite sets, thus leading to different colors.

\begin{theorem}\label{thm:hardness-degpart}
    \textsl{\ppartition}, for $p = (p_1, \dots, p_s)$, is \cclass{NP}-complete for graphs of maximum degree $\Delta \geq \sum_{i=1}^sp_i+ s + \maxdeg_s$.
\end{theorem}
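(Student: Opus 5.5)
We reduce from \scolor, which by \Cref{cor-coloringhardbound} is \cclass{NP}-complete on graphs of maximum degree $\maxdeg_s$; membership in \cclass{NP} is clear, since a $p_i$-degeneracy ordering of each part is a polynomial certificate. Write $N = \sum_{i=1}^s (p_i+1) - 1 = \sum_i p_i + s - 1$. Given an instance $H$ of \scolor, we build a graph $G$ in which each vertex $v$ of $H$ is replaced by a \emph{vertex gadget}. The gadget consists of a clique $Q_v$ on $N$ vertices together with two non-adjacent \emph{port} vertices $t_v, t'_v$, each complete to $Q_v$. For every edge $uv \in E(H)$ we add all four edges between $\{t_u,t'_u\}$ and $\{t_v,t'_v\}$.

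The first key step is a counting fact: in any $p$-partition, a clique meets $V_i$ in at most $p_i+1$ vertices. Hence $Q_v$, having one vertex fewer than $\sum_i(p_i+1)$, meets every $V_i$ in exactly $p_i+1$ vertices except for a unique index $c(v)$, where it meets $V_{c(v)}$ in exactly $p_{c(v)}$ vertices. Each port then has $p_i+1$ neighbours in $V_i$ for every $i \neq c(v)$. So a port cannot join any such $V_i$, because together with those neighbours it would form $K_{p_i+2}$. Both ports are therefore forced into $V_{c(v)}$.

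The second step shows that $c$ is a proper $s$-coloring of $H$. Suppose $uv \in E(H)$ and $c(u)=c(v)=c$. Consider the subgraph of $G[V_c]$ induced by $(Q_u\cup Q_v)\cap V_c$ and the four ports. Each clique vertex has $p_c-1$ clique neighbours plus its $2$ ports, so degree $p_c+1$. Each port has $p_c$ clique neighbours plus $2$ ports of the other gadget, so degree $p_c+2$. A graph of minimum degree at least $p_c+1$ is not $p_c$-degenerate, which is a contradiction.

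Conversely, given a proper $s$-coloring $c$ of $H$, we build a partition. In each gadget, put $p_i+1$ vertices of $Q_v$ into $V_i$ for each $i\neq c(v)$, and put the remaining $p_{c(v)}$ clique vertices together with both ports into $V_{c(v)}$. Every edge between two gadgets joins ports of differently colored vertices, so each $G[V_i]$ is a disjoint union of two kinds of pieces. One kind is the clique $K_{p_i+1}$. The other is $K_{p_i+2}$ minus one edge, which is $p_i$-degenerate: place one port last, since it misses the other port. This shows $G$ is $p$-partitionable if and only if $H$ is $s$-colorable.

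The step I expect to be the main obstacle is the degree bound. Clique vertices have degree $N+1$, but ports have degree $N + 2\deg_H(v) \le \sum_i p_i + s - 1 + 2\maxdeg_s$. This exceeds the target $\sum_i p_i + s + \maxdeg_s$ by roughly $\maxdeg_s$. I would first try to halve the inter-gadget load. For each $H$-edge $uv$, one option is to use only $t_u t_v$ and $t'_u t'_v$ and check whether the minimum-degree argument still goes through once $Q$ is enlarged by one vertex. Another option is to orient $H$ and let each port carry only the edges of one orientation class. If neither works, the fallback is to use $\maxdeg_s$ separate port pairs per gadget, one per incident edge, arranged so that each port has degree at most $N+1+\maxdeg_s$. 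Finally, to obtain maximum degree exactly $\Delta$ rather than at most $\Delta$, we add a disjoint padding component of maximum degree $\Delta$ that is trivially $p$-partitionable. For example, a star $K_{1,\Delta}$ works when $s\ge 2$ (centre in one class, leaves in another), and a padding clique argument handles $s=1$.
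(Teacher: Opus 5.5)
\paragraph{Gap: the degree bound is not established.} Your counting step (exactly one deficit index $c(v)$ in $Q_v$) is correct and a slightly cleaner form of the paper's Claim~1. Your converse direction is also fine. But the theorem's content is the degree threshold, and your construction misses it. Joining $\{t_u,t'_u\}$ completely to $\{t_v,t'_v\}$ gives ports degree $N+2\deg_H(v)$, which can reach $\sum_i p_i+s-1+2\maxdeg_s$. That exceeds $\sum_i p_i+s+\maxdeg_s$.

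You flag this but leave it open, and the remedies you list do not close it as stated:
\begin{itemize}
\item Enlarging $Q_v$ by one vertex is fatal. With $|Q_v|=\sum_i(p_i+1)$, every $V_i$ must contain exactly $p_i+1$ vertices of $Q_v$. A port would then complete a $K_{p_i+2}$ in whichever part it joins, so $G$ is never $p$-partitionable.
\item Giving each incident edge its own port pair, with ports complete to $Q_v$ as your forcing argument needs, gives clique vertices degree up to $N-1+2\maxdeg_s$. That again exceeds the bound.
\item Using a single port--port edge per $H$-edge breaks the local forcing once $p_c\ge 1$. Two same-part gadgets, each a $K_{p_c+2}$ minus an edge, joined by one edge, are still $p_c$-degenerate.
\end{itemize}

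\paragraph{The fix.} Your first option already works with no enlargement. Keep $|Q_v|=N$ and add only the matching edges $t_ut_v$ and $t'_ut'_v$; this is exactly the paper's construction. Suppose $c(u)=c(v)=c$, and take the subgraph induced by $(Q_u\cup Q_v)\cap V_c$ and the four ports.
\begin{itemize}
\item Each clique vertex still has $p_c-1+2=p_c+1$ neighbours.
\item Each port has its $p_c$ clique neighbours plus its matched partner, again $p_c+1$.
\end{itemize}
So this subgraph has minimum degree $p_c+1$, and $G[V_c]$ is not $p_c$-degenerate. Your converse is unchanged. Now ports have degree $N+\deg_H(v)\le \sum_i p_i+s-1+\maxdeg_s$ and clique vertices have degree $N+1$, both within the bound.

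With this change your argument coincides with the paper's. Your disjoint padding component is a reasonable addition, since the paper does not address attaining every $\Delta$ above the threshold. The $s=1$ case is moot, because the reduction from $s$-\textsc{Colorability} requires $s\ge 3$.
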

\begin{proof}
    Let $G=(V, E)$ be an instance of \scolor. 
    We construct an instance of \ppartition with a graph $H$. 
    We initialize $H$ as the empty graph. 
    For every vertex $v$ in $G$, we add a \emph{vertex gadget}
    $S_v$ to $H$ (see Figure \ref{fig:hardness-degpart}).
    The vertex gadget $S_v$ consists of $\sum_{i=1}^s p_i+s-1$ \emph{clique} vertices which form a clique and two \emph{external} vertices $x_v$ and $y_v$ which are adjacent to every clique vertex. 
    For each edge $uv$ in $G$, we add $x_ux_v$ and $y_uy_v$ to $E(H)$. 
    We call $S_v$ and $S_u$ \emph{matched} gadgets. 
    This completes the construction. 

    Now, we compute the maximum degree of $H$. Let $v \in V(S_u)$ for a vertex gadget $S_u \subseteq H$. If $v$ is a clique vertex, then $\deg_H(v) = \sum_{i=1}^s p_i + s$. If $v$ is an external vertex, then $v$ has exactly $\sum_{i=1}^sp_i+s-1$ neighbors which are clique vertices in $S_u$ and at most $\Delta_s$ neighbors which are external vertices in vertex gadgets distinct from $S_u$. Hence, $\deg_H(v) \leq \sum_{i=1}^s p_i + s + \Delta_s = \Delta(H)$.

    We claim that $H$ is $p$-partitionable if and only if $G$ is $s$-colorable. 
    First, assume $H$ is $p$-partitionable. We will show $G$ is $s$-colorable. Let $\mathcal{P}$ be a $p$-partition of $V(H)$ into sets $V_1, \dots, V_s$. We begin by proving some important claims about the vertex gadget.

    \begin{figure}
    \centering
    \begin{tikzpicture}
\newcommand{\vertexgadget}[4]{
	\node[vertex, label = above:#3](#1) at (0,1){};
	\node[rectangle, minimum width=3cm, minimum height = 0.7cm, rounded corners = 5pt, draw = black](C) at (0,0){};
	\node[vertex, label = below: #4](#2) at (0,-1){};

	\draw[edge](#1) -- (1,0.35);
	\draw[edge](#1) -- (-1,0.35);
	\draw[edge](#1) -- (1/3,0.35);
	\draw[edge](#1) -- (-1/3,0.35);
	\draw[edge](#2) -- (1,-0.35);
	\draw[edge](#2) -- (-1,-0.35);
	\draw[edge](#2) -- (1/3,-0.35);
	\draw[edge](#2) -- (-1/3,-0.35);
}

\vertexgadget{xu}{yu}{$x_u$}{$y_u$}
\node[](t) at (-1.5,1){$S_u$};

\begin{scope}[shift = {(4,0)}]
\vertexgadget{xv}{yv}{$x_v$}{$y_v$}
\node[](t) at (1.5,1){$S_v$};
\end{scope}

\draw[edge](xu) -- (xv);
\draw[edge](yu) -- (yv);
\end{tikzpicture}
    \caption{\label{fig:hardness-degpart}Two vertex gadgets connected by a matching, used in the proof of Theorem~\ref{thm:hardness-degpart}. Each box represents a clique, the vertices $x_u, y_u, x_v, y_v$ are the external vertices.}
    \end{figure}

\begin{claim} \label{clm: external_same}
    Let $S_v$ be a vertex gadget in $H$. The external vertices of $S_v$ belong to the same set in $\mathcal{P}$.
\end{claim}
\begin{proof}
    Let $x$ and $y$ be the external vertices of $S_v$ with $x \in V_i$ and $y \in V_j$ for $i, j \in [s]$. Suppose toward a contradiction that $i \neq j$.
    If $V_i$ contains $p_i+1$ clique vertices of $S_v$, then the subgraph induced on $x$ and all clique vertices of $S_v$ in $V_i$ have minimum degree at least~$p_i+1$. Hence, at most $p_i$ clique vertices of $S_v$ are in $V_i$. 
    Likewise, at most $p_j$ clique vertices in $S_v$ are in $V_j$.
    Similarly, if there were $p_h+2$ clique vertices in $V_h$ for $h \in [s]$, $h\neq i, j$, then the subgraph induced on the clique vertices of $S_v$ in $V_k$ has minimum degree at least~$p_h+1$. Hence, at most $p_h+1$ clique vertices of $S_v$ are in $V_h$. Since
     \[\sum_{h=1, h \neq i,j}^s (p_h+1) + p_i + p_j = \sum_{i=1}^s p_i+s-2 < \sum_{i=1}^s p_i +s -1,\]
     there is a clique vertex of $S_v$ which is not contained in a set of $\mathcal{P}$, contradicting the fact that $\mathcal{P}$ is a partition.
    \end{proof}

\begin{claim} \label{clm: two_edges}
    Let $S_v$ and $S_u$ be matched gadgets in $H$. Let $x$ be an external vertex of $S_v$ with $x \in V_i$ and $y$ be an external vertex of $S_u$ with $y \in V_j$. Then, $j \neq i$. 
\end{claim}
\begin{proof}
    Suppose toward a contradiction that $i=j$. Note in both $S_v$ and $S_u$, there are exactly $p_i$ clique vertices in $V_i$. Thus, $G[V_i]$ has minimum degree $p_i+1$, so $G[V_i]$ is not $p_i$-degenerate and $\mathcal{P}$ is not a $p$-partition.
\end{proof}
    We color the vertices of $G$ with colors $i \in [s]$. For all $v \in V(G)$, there is a corresponding vertex gadget $S_v$ in $H$. 
    By Claim \ref{clm: external_same}, both external vertices of $S_v$ are in the same set $V_i$ for some $i \in [s]$. We assign $v$ the color $i$. We argue that this is a proper coloring. 
    Consider some edge $uv \in E(G)$. By construction, $S_v$ and $S_u$ are matched gadgets. Let $a \in V(S_v)$ be an external vertex with $a \in V_i$ and let $b \in V(S_u)$ be an external vertex with $b \in V_j$. By \cref{clm: two_edges}, $i \neq j$, so $u$ and $v$ are assigned different colors. Therefore, this is a proper coloring and thus, $G$ is $s$-colorable.
    
    \medskip
    \noindent
    We now suppose $G$ is $s$-colorable and show that $H$ is $p$-partitionable by assigning each vertex in~$H$ to some set $\mathcal{P}=\{V_1 \dots V_s\}$.
    Fix a proper coloring of $G$ with colors $i \in [s]$. For $v \in V(G)$ with color $i$, assign all external vertices of $S_v$ to the set $V_i$.
    Assign exactly $p_i$ clique vertices of $S_v$ to $V_i$ and $p_j+1$ clique vertices to $V_j$ for all $j \in [s]$ with $j \neq i$.

    To see that $\mathcal{P}$ is a $p$-partition, note first that $S_v[V_j]$ is a clique on $p_j+1$ vertices for all $j \in [s]$ with $j \neq i$, which is $p_j$-degenerate, and $S_v[V_i]$ is $p_i$-degenerate. 
    Now, consider two matched gadgets $S_u$ and $S_v$. 
    By construction, $uv \in E(G)$, so $u$ is assigned a different color from $v$. 
    Therefore, the external vertices of $S_u$ are in a set $V_i$ and the external vertices of~$S_v$ are in a set $V_j$ with $i \neq j$. 
    Hence, $S_v[V_i]$ is disconnected from $S_u[V_i]$ for all $i \in [s]$, so $(V_1, \dots, V_s)$ is a $p$-partition of $H$.
\end{proof}

\noindent
We can improve the maximum degree bound by optimizing the number of external vertices in the vertex gadget. 
We modify the construction of $H$ as follows. We allow the vertex gadget to have $\ell$ external vertices instead of 2. 
For every vertex gadget $S_v$, we distribute the edges matching $S_v$ to other vertex gadgets as evenly as possible among the external vertices to minimize the maximum degree in each gadget.

The degree of the clique vertices in $H$ is then $\sum_{i=1}^s p_i+s+\ell-2$. The degree of the external vertices in $H$ is at most $\sum_{i=1}^sp_i+s-1 + \left\lceil\frac{2}{\ell}\Delta_s\right\rceil$ since there are two edges in~$H$ matching vertex gadgets for every edge in $G$ distributed over $\ell$ external vertices. To minimize $\Delta(H)$, we therefore need to minimize $f(\ell)=\max \left \{\ell-1, \left\lceil\frac{2}{\ell}\Delta_k\right\rceil\right \}$. First, note the minimum of $g(\ell) = \max\{\ell-1, \frac{2}{\ell}\Delta_k\}$ with $\ell \in \mathbb{R}$ is achieved for
    $\ell^*=\frac{1+\sqrt{8k+4\sqrt{4k+1}+5}}{2}$.
    Since we require that $\ell \in \mathbb{N}$, we obtain that 
    the minimum value of $f(\ell)$ is $f_{min}=\min \{f(\lfloor \ell^* \rfloor), f(\lceil \ell^*\rceil)\}$. It then follows that \ppartition is \cclass{NP}-complete for graphs with maximum degree 
$\sum_{i=1}^s p_i+s-1+ f_{min}$.

%%
%% Bibliography
%%

%% Please use bibtex, 

\bibliography{refs.bib}

\appendix

\section{Proof of~\Cref{cor-coloringhardbound}}

\corcoloringhard*

\begin{proof}
    First, note that by the definition in the statement of~\Cref{thm:molloy-hardness}, we have $\maxdeg \geq (k_\maxdeg + 1)(k_\maxdeg + 2)$. It follows that
    \[
        \sqrt{\maxdeg} \geq \sqrt{(k_\maxdeg + 1)(k_\maxdeg + 2)} > k_\maxdeg + 1.
    \]
    That is, we have $k_\maxdeg < \sqrt{\maxdeg} - 1$. Now, let $k$ be any value such that $3 \leq k \leq \maxdeg - (\sqrt{\Delta} - 1) - 1$. By substituting $k_\maxdeg < \sqrt{\maxdeg} - 1$, we obtain that for all such $k$, $k \leq \maxdeg - k_\maxdeg - 1$. After rearranging and applying~\Cref{thm:molloy-hardness}, we have that \kcolor is \cclass{NP}-hard whenever $k \geq 3$ and $\maxdeg \geq k + (\sqrt{\maxdeg} - 1) + 1 = k + \sqrt{\maxdeg}$.
    For a fixed $k \geq 3$, to find the minimum~$\maxdeg$ for which this inequality holds we need to solve the equation $\maxdeg_0 = k + \sqrt{\maxdeg_0}$. The solution is
    \[ \maxdeg_0 = \left(\frac{1}{2} + \frac{\sqrt{4k+1}}{2}\right)^2.\]
\end{proof}

\section{Proof of~\Cref{prop:rubin-subgraphs-two-colorings}}\label{appendix:degree-choice}

\rubinsubgraphstwocolorings*
\begin{proof}%[Proof of~\Cref{prop:rubin-subgraphs-two-colorings}]
    Let $H$ be a Rubin graph on vertices $v_1, v_2, \ldots, v_n$, where we have arranged the vertices as they appear along the cycle (possibly with one chord) defining~$H$. Let $L$ be any degree-list assignment.
    We consider cases. In what follows, we say $v_{i - 1} = v_n$ if $i = 1$ and $v_{i+1} = v_1$ if $i = n$.

    \case{1} There exists a vertex $v_i$ with $|L(v_i)| > \deg(v_i)$.\\
    Without loss of generality, say $i = n$. Then we greedily color the vertices in order of increasing index. This is always possible, since $v_n$ has more available colors than neighbors. Moreover, there are at least two choices of color for $v_1$, yielding at least $2$ colorings.

    \case{2} All lists are pairwise identical.\\
    If $H$ has a chord then we are in Case 1, so we may assume that $H$ is an induced even cycle. Then we observe that there are two ways to color an even cycle by two colors.

    \case{3} There exists a vertex $v_i$ and a color $x$ such that $x \in L(v_i)$ and $x \notin L(v_{i - 1}) \cup L(v_{i+1})$.\\
    Without loss of generality, say $i = 1$. Fix color $x$ for $v_1$, and then color greedily in order of increasing index. Each vertex besides $v_n$ has at least one uncolored neighbor at the time that it is colored, and thus an available color. The vertex $v_n$ will have an available color since one of its neighbors received color $x \notin L(v_n)$. Finally, there are at least two choices of color for $v_2$ since $x \notin L(v_2)$.
    Notice that this argument works regardless of the endpoints of a chord, if one exists.

    \case{4} There exists a vertex $v_i$ and two distinct colors $x, y$ such that $x, y \in L(v_i)$, $x \notin L(v_{i+1})$, and $y \notin L(v_{i-1})$.\\
    Without loss of generality, say $i = 1$. Produce one coloring by fixing color $y$ for $v_1$ and then greedily coloring in order of increasing index. Note that $v_n$ has an available color, since one of its neighbors received color $y \notin L(v_n)$. 
    Produce a second coloring by fixing color $x$ for $v_1$ and greedily coloring in order of decreasing index. Now, $v_2$ has an available color since one of its neighbors received color $x \notin L(v_2)$.
    Notice again that this argument works regardless of the endpoints of a chord, if one exists.

    \case{5} There exists a vertex $v_i$ and two distinct colors $x, y$ such that $x, y \in L(v_i)$ and $x, y \notin L(v_{i - 1})$ or $x, y \notin L(v_{i+1})$.\\
    Without loss of generality, say $i = 1$, and suppose $x, y \notin L(v_n)$. Produce one coloring by fixing color $x$ for $v_1$ and then greedily coloring in order of increasing index. The vertex $v_n$ has an available color, since one of its neighbors received color $x \notin L(v_n)$. Produce a second coloring by fixing $y$ for $v_1$ and then coloring in order of increasing index. Once again $v_n$ has an available color, since one of its neighbors received color $y \notin L(v_n)$. The proof for the case where $x, y \notin L(v_{2})$ is identical, except that we greedily color in order of decreasing index. Finally, once again the argument does not depend upon the endpoints of a chord, if one exists.

    \medskip
    \noindent
    The cases above are not exhaustive, but they give us the tools we need to complete the proof. In what follows, we assume that none of these cases applies. 
    
    Assume first that $H$ has a chord, and say without loss of generality that $v_1$ is an endpoint of this chord. 
    Then $\deg(v_1) = 3$ and $\deg(v_2) = \deg(v_n) = 2$. Thus, if case 1 does not apply, then either case 3 or case 4 must apply.

    Then we may assume that $H$ is an induced even cycle with all lists of length $2$. Since Case 2 does not apply, not all lists are pairwise identical, and in particular we may assume without loss of generality that $L(v_1) \neq L(v_n)$. Because Case 5 does not apply, $L(v_1)$ and $L(v_n)$ have exactly one common color. Concretely, we may write $L(v_1) = \{a, b\}$ and $L(v_n) = \{b, c\}$. Because neither Case 3 nor 4 applies, we may assume that $L(v_2) = L(v_1) = \{a, b\}$.%, and similarly that $L(v_{n-1}) = L(v_n) = \{b, c\}$.
    
    We now consider two greedy procedures producing (not necessarily distinct) $L$-colorings $\phi_1$ and $\phi_2$ of $H$. We set $\phi_1(v_1) = a$. We then complete $\phi_1$ by greedily coloring in order of increasing index. There will be an available color for $v_n$, since $a \notin L(v_n)$. Notice that $\phi_1(v_2) = b$. Similarly, we form $\phi_2$ by setting $\phi_2(v_n) = c$ and then greedily coloring in order of decreasing index.

    If $\phi_1$ and $\phi_2$ differ on any vertex then we found two colorings, so we may assume that
    \begin{align}
        \phi_1(v_i) = \phi_2(v_i) \text{ for all } i \in [n].\label{eq:rubin-coloring:identical-colorings}
    \end{align}

    Now suppose there exists some vertex $v_i$, for $i > 1$, such that $\phi_1(v_{i - 1}) \notin L(v_i)$. Then $v_i$ had two available colors during the greedy coloring procedure defining $\phi_1$, and we may form a second coloring $\phi_1'$ by changing the color of $v_i$ and then continuing the greedy procedure as before.
    Thus, we may assume that
    \begin{align}
        L(v_i) = \{\phi_1(v_{i - 1}), \phi_1(v_i)\} \text{ for all } i > 1.\label{eq:rubin-coloring:list1}
    \end{align}
    By a symmetric argument, we may assume that
    \begin{align}
        L(v_i) = \{\phi_2(v_{i}), \phi_2(v_{i+1})\} \text{ for all } i < n.\label{eq:rubin-coloring:list2}
    \end{align}

    We now complete the proof by showing that equations (\ref{eq:rubin-coloring:identical-colorings}), (\ref{eq:rubin-coloring:list1}), and (\ref{eq:rubin-coloring:list2}) cannot all be true. Assume otherwise. Then by combining equations (\ref{eq:rubin-coloring:identical-colorings}), (\ref{eq:rubin-coloring:list1}), and (\ref{eq:rubin-coloring:list2}) we obtain the following.
    \begin{align*}
        L(v_i) = \{\phi_1(v_{i - 1}), \phi_1(v_i), \phi_1(v_{i+1})\} \text{ for all } 1 < i < n.
    \end{align*}
    Recalling that we have already assumed Case 1 does not apply, this implies that $\phi_1(v_{i-1}) = \phi_1(v_{i+1})$ for all $1 < i < n$.
    Since $\phi_1(v_1) = a$ and $\phi_1(v_2) = b$, it follows by induction that for all $i \in [n],$
    \begin{align*}
        \phi_1(v_i) = \begin{cases}
                        a \quad\text{if $i$ is odd}\\
                        b \quad\text{if $i$ is even}.\\
        \end{cases}
    \end{align*}

    Since $n$ is even, this implies that $\phi_1(v_n) = b$, and by equation~(\ref{eq:rubin-coloring:identical-colorings}) we have $\phi_2(v_n) = b$, but this contradicts the definition of $\phi_2$. This completes the proof.
\end{proof}

\end{document}